\title{Random walks on oriented lattices and Martin boundary}
\author{B. de Loynes}
\theoremstyle{plain}
\newtheorem{prp}{Proposition}
\newtheorem{lem}{Lemma}
\newtheorem{thm}{Theorem}
\newtheorem{cor}{Corollary}
\theoremstyle{definition}
\newtheorem{df}{Definition}
\theoremstyle{remark}
\newif\ifdetails
\newcommand{\details}[1]{\ifdetails \textcolor{red}{\textbf{ONLY IN DRAFT VERSION}} \\ \textcolor{blue}{#1} \textcolor{red}{\textbf{NOT EXPECTED IN FINAL VERSION}} \\ \else \fi}
\newif\ifdraft
\newcommand{\fixme}[1]{\ifdraft \marginpar{\textsf{\textbf{FIXME:} \textcolor{red}{#1}}} \else \fi}
\begin{document}

\maketitle

In \cite{Pet1}, transience and recurrence are studied for simple random walks on various types of partially horizontally oriented regular lattices. In this note we aim to give precisions in the transient case by computing the Martin boundary of such random walks.

\section{Notation and definitions}

A directed graph (or di-graph for short) $\mathbf{G}=(\mathbf{V},\mathbf{E})$ is the pair of a countable set $\mathbf{V}$ of vertices and a set $\mathbf{E} \subset \mathbf{V} \times \mathbf{V}$ of directed edges.

Range and source functions, denoted respectively by $r$ and $s$, are defined as mapping $r,s : \mathbf{E} \mapsto \mathbf{V}$, defined for $e=(u,v) \in \mathbf{E}$ by $r(e)=v \in \mathbf{V}$ and $s(e)=u \in \mathbf{V}$. We also define, for each vertex $v \in \mathbf{V}$, its inwards degree by
\begin{displaymath}
d^+_v=\mathsf{card}\{ e \in \mathbf{E} : r(a)=v \},
\end{displaymath}
and its outwards degree by
\begin{displaymath}
d_v^-=\mathsf{card}\{ e \in \mathbf{E} : s(e)=v \}.
\end{displaymath}
The graph $\mathbf{G}$ is said to be transitive if for any vertices $u,v \in \mathbf{V}$ there exists a finite sequence $(w_0, \cdots, w_k)$ of vertices $w_i \in \mathbf{V}$ with $w_0=u$ and $w_k=v$, such that, $(w_i,w_{i+1}) \in \mathbf{E}$ for all $i \in \{ 0, \cdots ,k-1 \}$. We will always suppose the graphs to be  transitive. 

\begin{df}
Let $\mathbf{G}=(\mathbf{V},\mathbf{E})$ be a directed graph. A simple random walk on $\mathbf{G}$ is a $\mathbf{V}$-valued Markov chain $(M_n)_{n \geq 0}$ with Markov kernel $P$ defined by
\begin{displaymath}
P(u,v)=\mathbf{P}(M_{n+1}=v | M_n=u)=\frac{1}{d^-_u}
\end{displaymath}
whenever $d^-_u > 0$, that is $(u,v) \in \mathbf{E}$, and zero otherwise.
\end{df}

In the sequel we will consider two dimensional lattices, i.e $\mathbf{V}=\mathbb{Z}^2$ and $\mathbf{E}$ is a subset of nearest neighborhoods in $\mathbb{Z}^2$. We decompose $\mathbf{V}=\mathbf{V}_1 \times \mathbf{V}_2$ into horizontal and vertical direction. More precisely, if $v \in \mathbf{V}=\mathbb{Z}^2$, then $v=(v_1,v_2)$ with $v_i \in \mathbf{V}_i$ the usual coordinates in $\mathbb{Z}^2$.

Let $\epsilon=(\epsilon_y)_{y \in \mathbf{V}_2}$ be a $\{-1,0,1\}$-valued sequence of variables. The sequence will be defined deterministically, but it can be random variable, or even given by a dynamical system.

\begin{df}
Let $\mathbf{V}=\mathbf{V}_1 \times \mathbf{V}_2$ and $\epsilon$ a sequence as above. We call $\epsilon$-horizontally oriented lattice $\mathbf{G}=(\mathbf{G},\epsilon)$, the directed graph with vertex set $\mathbf{V}=\mathbb{Z}^2$ and edge set $\mathbf{E}$ with the condition $(u,v) \in \mathbf{E}$ if and only if one of the following holds
\begin{enumerate}
\item either $v_1=u_1$ and $v_2=u_2 \pm 1$
\item or $v_2=u_2$ and $v_1=u_1+\epsilon_{u_2}$
\end{enumerate} 
\end{df}

Note that $\mathbf{G}$ is transitive if and only if $1$ and $-1$ are both in the range of $\epsilon$.

Let $\epsilon$ be the sequence defined by $\epsilon_0 = 0$ and $\epsilon_y=\mathsf{sgn}(y)$ where $\mathsf{sgn}$ is the sign function, then, we denote by $\mathbb{H}$ the $\epsilon$-graph induced.

\begin{figure}[!h]
\begin{center}
\includegraphics[width=0.5\textwidth]{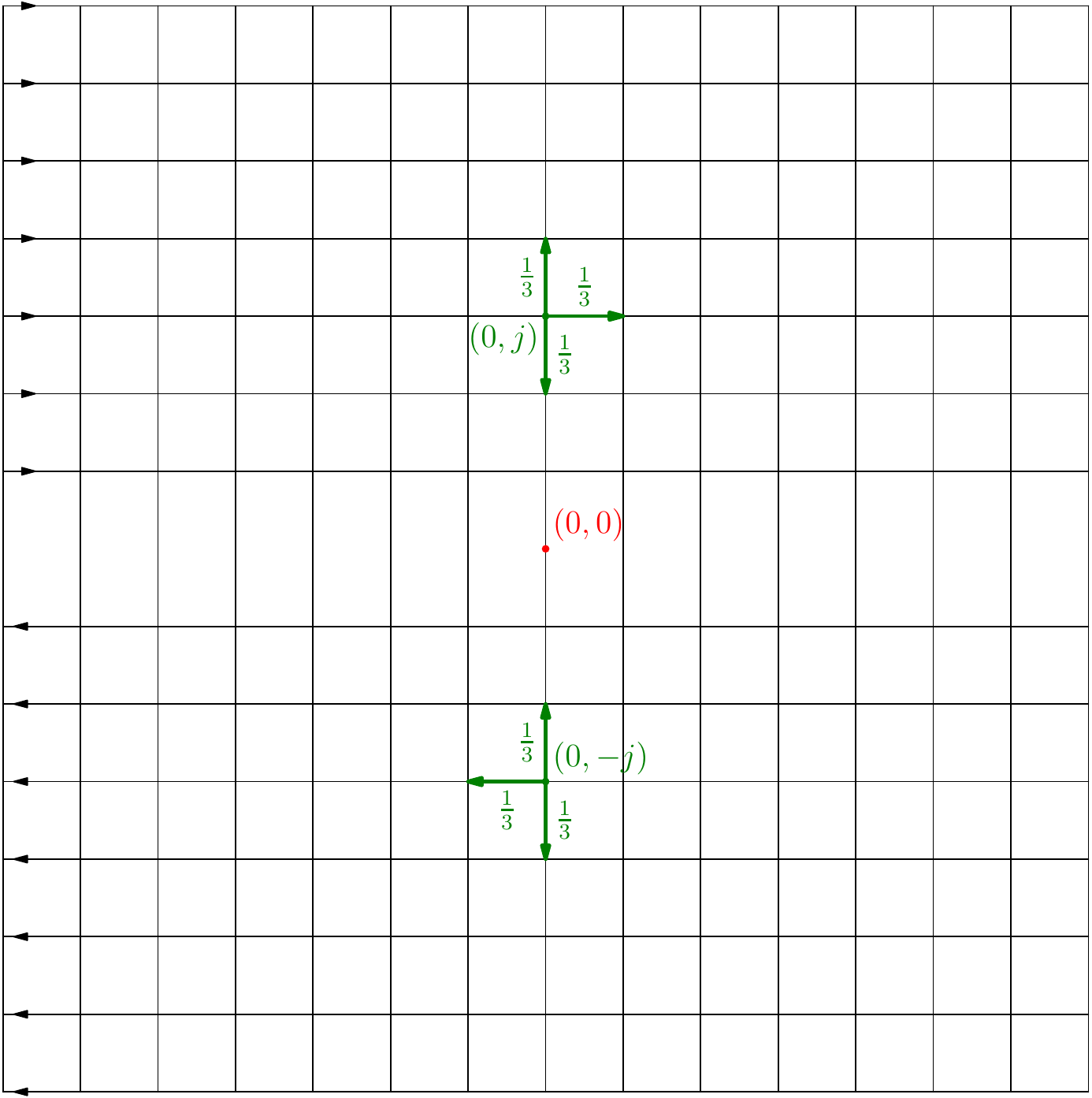}
\end{center}
\caption{The half plane one-way lattice $\mathbb{H}$}
\end{figure}

\section{Results}

Let $(\tau_{n})_{n \geq 0}$ be a sequence of stopping times defined inductively by $\tau_0=0$ and
\begin{displaymath}
\tau_{n+1}=\inf \{ t \geq \tau_n + 1 : M_t^{(2)}=0 \} 
\end{displaymath}
where $M_n=(M_n^{(1)},M_n^{(2)})$, and we have for all $x \in \mathbb{H}$, $\mathbf{P}^x(\tau_n < \infty)=1$.

The sequence of random variables $(M_{\tau_n})_{n \geq 0}$ is itself a Markov chain and will be referred to as the induced Markov chain or the embedded Markov chain. At this step, we may give the main results shown in this paper.

\begin{thm} \label{embedded}
The Martin boundary of the induced Markov chain $(M_{\tau_n})_{n \geq 0}$ is trivial.
\end{thm}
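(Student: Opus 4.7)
The plan is to reduce the statement to the Martin boundary problem for a symmetric random walk on $\mathbb{Z}$, and then to conclude by a Choquet--Deny type argument.

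First, I identify the induced chain with a spatially homogeneous random walk on $\mathbb{Z}$. The values of $M_{\tau_n}$ lie in $\mathbb{Z}\times\{0\}$, which I identify with $\mathbb{Z}$ through the first coordinate. Because the di-graph $\mathbb{H}$ is invariant under the horizontal translations $(x,y)\mapsto(x+k,y)$, these translations commute with the transition kernel $P$. Hence the transition probabilities of the induced chain depend only on differences of first coordinates, so $(M_{\tau_n})_{n\geq 0}$ is a random walk on $\mathbb{Z}$ with i.i.d.\ increments of some law $\mu$.

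Next, I check that $\mu$ is symmetric. The map $\sigma : (x,y)\mapsto(-x,-y)$ is an automorphism of $\mathbb{H}$: it fixes the origin and satisfies $\epsilon_{-y}=-\epsilon_y$, so it preserves both the vertical edges and the horizontal edges of each row. It therefore conjugates the simple random walk started at $(0,0)$ to itself, and in particular pairs each excursion of $(M_n)$ into the upper half-plane (which contributes a nonnegative horizontal displacement) with a mirror excursion into the lower half-plane of equal probability (which contributes the opposite displacement). Consequently $\mu(k)=\mu(-k)$ for every $k\in\mathbb{Z}$.

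Finally, I invoke the Choquet--Deny theorem for random walks on abelian groups: on $\mathbb{Z}$, the minimal positive harmonic functions of an irreducible random walk are exactly the positive harmonic characters $x\mapsto\alpha^x$ with $\alpha>0$ satisfying $\phi(\alpha):=\sum_{k\in\mathbb{Z}}\mu(k)\alpha^k=1$. The function $\phi$ is convex on $(0,\infty)$, and the symmetry of $\mu$ yields $\phi(\alpha)=\phi(1/\alpha)$; strict convexity then forces $\alpha=1$ to be the unique positive solution of $\phi(\alpha)=1$. Hence the only minimal positive harmonic function is the constant one, and the Martin boundary of $(M_{\tau_n})$ reduces to a single point.

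The main obstacles I anticipate are, on the one hand, the explicit verification of irreducibility of $\mu$ on $\mathbb{Z}$ (which I would handle by exhibiting excursions realizing any prescribed horizontal displacement), and on the other hand, the invocation of the Choquet--Deny representation in the heavy-tail regime where $\phi(\alpha)=+\infty$ for every $\alpha\neq 1$; there the only harmonic character remains the constant one and the conclusion still goes through, but the classical statement has to be applied with some care.
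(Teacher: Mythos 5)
Your proposal is correct, but it takes a genuinely different route from the paper. The paper proves Theorem~\ref{embedded} by hard Fourier analysis: it computes the characteristic function $\phi$ of the increment $X_{\sigma_1}$ explicitly (Proposition~\ref{law}), establishes an analytic decomposition of $(1-\phi)^{-1}$ near $t=0$ with a $|t|^{-1/2}$ singularity (Proposition~\ref{decinitial}), extracts the precise asymptotic $\gamma(y)\sim c'/\sqrt{|y|}$ of the Green function (Proposition~\ref{equivalent}), and then reads off $K_0(x,y)=\gamma(y-x)/\gamma(y)\to 1$ for every unbounded sequence. You replace this entire chain by a soft argument: horizontal translation invariance of $\mathbb{H}$ makes the induced chain a spatially homogeneous random walk on $\mathbb{Z}$, the automorphism $(x,y)\mapsto(-x,-y)$ (which uses $\epsilon_{-y}=-\epsilon_y$) makes the step law $\mu$ symmetric, adaptedness follows by exhibiting excursions with arbitrary signed displacement, and then Choquet--Deny identifies every positive harmonic function as a mixture of exponentials $\alpha^x$ with $\sum_k\mu(k)\alpha^k=1$; the convexity-plus-symmetry argument pins down $\alpha=1$ as the only solution (the heavy-tail case $\sum_k\mu(k)\alpha^k=\infty$ for $\alpha\neq 1$, which is what actually occurs here, only helps). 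To finish you should spell out the one step you left implicit: boundary kernels obtained as limits $\lim_k K(\cdot,y_k)$ along unbounded sequences are harmonic, since $\delta_{x,y_k}/G(0,y_k)\to 0$ for fixed $x$, and are normalized at the origin; since all positive harmonic functions are constant, all boundary kernels equal $1$, so the Martin compactification is the one-point compactification. Your approach is cleaner, generalizes more readily, and is close in spirit to the Choquet--Deny argument the author uses in Section~\ref{poisson} for the Poisson boundary; the trade-off is that it yields no quantitative information on $\gamma$, whereas the paper's $c'/\sqrt{|y|}$ estimate is the engine that drives the much harder Theorem~\ref{original} in Section~\ref{secoriginal}.
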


\begin{thm} \label{original}
The Martin boundary of the original Markov chain $(M_n)_{n \geq 0}$ is trivial.
\end{thm}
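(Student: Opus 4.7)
The plan is to reduce Theorem~\ref{original} to Theorem~\ref{embedded} via a last-passage decomposition at the horizontal axis $A:=\mathbb{Z}\times\{0\}$. Denote by $G$ the Green function of $(M_n)$ and by $\bar G$ that of the embedded chain. Fix a reference point $x_0\in A$, and for $z\in A$, $y\in\mathbb{Z}^2\setminus A$ let
\begin{displaymath}
V(z,y):=\mathbf{E}^z\Bigl[\sum_{1\leq n<\tau_1}\indic_{M_n=y}\Bigr]
\end{displaymath}
be the expected number of visits to $y$ during one excursion from $z$. Splitting each path $x\rightsquigarrow y$ at its last visit to $A$ and using $G(x,z)=\bar G(x,z)$ whenever $x,z\in A$ yields, for $x\in A$ and $y\notin A$,
\begin{displaymath}
G(x,y)=\sum_{z\in A}\bar G(x,z)\,V(z,y).
\end{displaymath}
The Martin kernel $K(x,y):=G(x,y)/G(x_0,y)$ thus becomes a convex combination of the embedded Martin kernel $\bar K$:
\begin{displaymath}
K(x,y)=\sum_{z\in A}w_y(z)\,\bar K(x,z),\qquad w_y(z):=\frac{\bar G(x_0,z)V(z,y)}{G(x_0,y)}.
\end{displaymath}

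Theorem~\ref{embedded} provides $\bar K(x,z)\to 1$ as $|z|\to\infty$ along $A$, and the boundedness of $\bar K(x,\cdot)$ is then automatic. To deduce $K(x,y)\to 1$ for every sequence $y\to\infty$ in $\mathbb{Z}^2$, it is enough to prove the tightness at infinity $\sum_{|z|\leq N}w_y(z)\to 0$ for every fixed $N$. Horizontal translation invariance of $\mathbb{H}$ rewrites both factors appearing in $w_y(z)$ as functions of horizontal differences: $\bar G(x_0,z)=g(z_1-x_0^{(1)})$ with $g$ the Green function of the embedded walk on $\mathbb{Z}$, and $V(z,y)=f(y_1-z_1,y_2)$ where $\sum_u f(u,y_2)$ is bounded uniformly in $y_2\neq 0$ (a single excursion visits a bounded expected number of sites at any given height). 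Combining the slow variation $g(k-a)/g(k)\to 1$ encoded in Theorem~\ref{embedded} with this uniform $\ell^1$ control of the convolution kernel then delivers the required tightness, whence $K(x,y)\to 1$ for every $x\in A$.

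To extend the convergence to $x\notin A$, consider any subsequential limit $h:=\lim_n K(\cdot,y_n)$; it is a positive $P$-harmonic function with $h(x_0)=1$ and, by the previous step, $h\equiv 1$ on $A$. Since $T_A<\infty$ almost surely, applying Fatou's lemma to the positive martingale $(h(M_{T_A\wedge n}))_n$ produces $h(x)\geq\mathbf{E}^x[h(M_{T_A})]=1$. Hence $h-1$ is a nonnegative $P$-harmonic function vanishing on $A$, and the irreducibility of $\mathbb{H}$ (its transitivity) combined with the minimum principle forces $h\equiv 1$. The Martin boundary thus reduces to a single point.

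The main obstacle is the tightness step of the second paragraph: the function $f(\cdot,y_2)$ is concentrated on horizontal coordinates of order $y_2^2$ (the diffusive vertical scale of an excursion multiplied by its ballistic horizontal drift), so the convolution asymptotics must be carried out uniformly along every sequence $y\to\infty$, including regimes where $y_1$ and $y_2$ diverge at arbitrary relative rates. Controlling this uniformly will likely require the regular variation of $g$ and a careful splitting of the sum into near- and far-field contributions.
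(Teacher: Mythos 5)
Your last-passage decomposition at $A=\mathbb{Z}\times\{0\}$ is a genuinely different route from the paper's, which instead conditions on the \emph{first} return to $A=\mathbb{H}_0$: equation~(\ref{decomposition}) writes
\begin{displaymath}
K(x,y)=\frac{\mathbf{E}^x(\eta_{0,\tau_1}(y))}{G(0,y)}+\sum_{z\in \mathbb{H}_0}\nu_x(z)K(z,y),
\end{displaymath}
and Sections~\ref{mk-conditioned} and~\ref{before} prove separately that the second term tends to $1$ (Corollary~\ref{noyaufort}) and the first to $0$ (Corollary~\ref{ccl-firstt}). Your repackaging --- a convex combination $K(x,y)=\sum_z w_y(z)\,\bar K(x,z)$ of the embedded Martin kernel --- is structurally cleaner, reducing the whole theorem to Theorem~\ref{embedded} plus a single tightness statement; and the extension from $x\in A$ to $x\notin A$ by optional stopping of $h(M_{T_A\wedge n})$ combined with the minimum principle is a neat step the paper does not isolate.

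The gap is the tightness $\sum_{|z|\leq N}w_y(z)\to 0$, which carries all the analytic content, and the sketch does not deliver it. The claim that slow variation of $\bar G(x_0,\cdot)$ plus a uniform $\ell^1$ bound on the excursion kernel $f(\cdot,y_2)$ already yields tightness is false as stated: if $f(\cdot,y_2)$ stayed concentrated near $0$ uniformly in $y_2$, both hypotheses would hold, yet $w_y$ would cling to $z\approx y_1$ and fail to escape whenever $y_1$ stays bounded. What saves the argument --- as you acknowledge in the closing paragraph --- is the ballistic drift of $f(\cdot,y_2)$ to scale $y_2^2$, together with a quantitative estimate of the form $\sup_{|u-y_1|\leq N}f(u,y_2)=o\bigl(G(x_0,y)\bigr)$, uniform over every relative rate of divergence of $y_1$ and $y_2$. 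That is exactly the paper's hard part: the asymptotics of $G(0,y)$ split according to whether $y_1/y_2^2$ stays bounded or diverges (Propositions~\ref{lemy2} and~\ref{lemy1}, built on Proposition~\ref{expnoyaufaible} and the singularity expansions of Propositions~\ref{decinitial} and~\ref{dec}), and the bound $V(z,y)=\mathbf{E}^z(\eta_{0,\tau_1}(y))=o(G(0,y))$ for fixed $z\in A$ is Corollary~\ref{ccl-firstt}, proved through Propositions~\ref{decomp-esp}, \ref{exp-dec-g} and~\ref{ratesummux}; your tightness is essentially this corollary restricted to $x\in A$. The blueprint is sound and would give a tidier exposition, but as written it names the bulk of the paper's technical work without carrying it out.
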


In the section \ref{secembedded} we will show the theorem \ref{embedded}. The triviality of the Martin boundary comes from precise estimates of the Green kernel computed via the characteristic function of the process $(M_{\tau_n})_{n \geq 0}$. The theorem \ref{original}, proved in section \ref{secoriginal}, is a consequence of similar but more tedious estimates of the Green function. Finally, in a last paragraph, we describe the Poisson boundary of more general random walks and more general partially oriented lattices.

\section{Proofs of theorem}

\subsection{Characteristic function of the induced Markov chain} \label{secembedded}

We start with the computation of the characteristic function of the induced Markov chain $(M_{\tau_n})_{n \geq 0}$.

\begin{df}
Let $(\psi_n)_{n \geq 0}$ be a sequence of independent, identically distributed, $\{-1,1\}$-valued symmetric Bernoulli's variables and
\begin{displaymath}
Y_n = Y_0 + \sum_{k=1}^n \psi_k
\end{displaymath}
for all $n \geq 1$ with $Y_0=M_0^{(2)}$.
Denote by
\begin{displaymath}
\eta_n(y)=\sum_{k=0}^n 1_{\{Y_k=y\}}
\end{displaymath}
\end{df}

\begin{df}
Let $(\sigma_n)_{n \geq 0}$ be a sequence of stopping times defined by induction by $\sigma_0=0$ and
\begin{displaymath}
\sigma_{n+1}=\inf \{ n \geq \sigma_n+1 : Y_n=0 \}, \textrm{ for } n \geq 0.
\end{displaymath}
More precisely, $\sigma_n$ is the $n^{th}$ return time to the origin of a simple symmetric random walk on $\mathbb{Z}$.
\end{df}

\begin{df}
Let $(\xi_n^{(y)})_{n \geq 1, y \in \mathbb{V}_2}$ be a doubly infinite sequence of
independent identically distributed $\mathbb{N}$-valued geometric random variables
of parameters $p$ and $q=1-p$. Let
\begin{displaymath}
X_n=\sum_{y \in \mathbb{V}_2} \epsilon_y \sum_{i=1}^{\eta_{n-1}(y)} \xi_i^{(y)}, n \in \mathbb{N}
\end{displaymath}
Moreover, we denote $|X_n|$ the quantity $\sum_{y \in \mathbb{V}_2} |\epsilon_y| \sum_{i=1}^{\eta_{n-1}(y)} \xi_i^{(y)}, n \in \mathbb{N}$ which represent the total horizontal displacement.
\end{df}

Denote by $T_n$ the time
\begin{displaymath}
T_n=n+\sum_{y \in \mathbb{V}_2} \sum_{i=1}^{\eta_{n-1}(y)} \xi_i^{(y)}
\end{displaymath}
with the convention that the sum $\sum_i$ vanishes whenever $\eta_{n-1}(y)=0$. Then
\begin{displaymath}
M_{T_n}=(X_n,Y_n)
\end{displaymath}

Recall that $\tau_n$ denote the $n^{th}$ return to $0$ of the vertical projection of the $M_n$'s. One has the following.

\begin{prp} \label{law}
The law of $M_{\tau_n}$ is uniquely determinated by the law of $X_{\sigma_1}$, \emph{i.e.} its characteristic function is given by 
\begin{displaymath}
\mathbf{E}^0(e^{i \langle t, M_{\tau_n} \rangle})=\mathbf{E}^0(e^{it_1 X_{\sigma_1}})^n.
\end{displaymath}
We denote by $\phi$ the characteristic function of $X_{\sigma_1}$ with starting point $0$. It is given by
\begin{displaymath}
\phi(t)=\mathbf{E}^0[\exp(i t X_{\sigma_1})]=\mathsf{Re} \textrm{ } r(t)^{-1} g(r(t))
\end{displaymath}
where the functions $g$ and $r$ are defined by the formulae
\begin{displaymath} 
g(x)=\frac{1-\sqrt{1-x^2}}{x} \textrm{ and } r(t)=\frac{p}{1-qe^{it}}.
\end{displaymath}
\end{prp}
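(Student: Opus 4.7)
The proposition contains two assertions. The first, that the characteristic function of $M_{\tau_n}$ is the $n$-th power of the characteristic function of $X_{\sigma_1}$, follows from a standard strong-Markov argument combined with horizontal translation invariance of the kernel $P$. The second, the explicit formula for $\phi$, is obtained by decomposing a single excursion of $Y$ between successive visits to $0$.

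For the first identity, $M_{\tau_n}^{(2)}=0$ by the very definition of $\tau_n$, so only $t_1$ enters $\langle t,M_{\tau_n}\rangle$. The kernel $P$ on $\mathbb{H}$ depends on the row $y$ but is invariant under horizontal translations $(x,y)\mapsto(x+a,y)$; combining the strong Markov property at $\tau_1<\tau_2<\cdots$ with this invariance shows that the horizontal increments $M_{\tau_k}^{(1)}-M_{\tau_{k-1}}^{(1)}$ are i.i.d.\ under $\mathbf{P}^0$ and distributed as $M_{\tau_1}^{(1)}$. Since $\tau_n=T_{\sigma_n}$ by construction (each return of $M$ to the zeroth row is produced by a vertical step of $Y$ reaching $0$), $M_{\tau_1}^{(1)}$ has the same law as $X_{\sigma_1}$, and the product identity follows.

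For the second identity, I condition on the sign of $Y_1$. On $\{Y_1=1\}$ the walk $Y$ stays strictly positive for $1\leq k\leq\sigma_1-1$, so $\epsilon_{Y_k}=+1$ there, while the only visit to the zeroth row, at time $0$, contributes nothing thanks to $\epsilon_0=0$. As the geometric variables $\xi_i^{(y)}$ are independent of $Y$, conditioning on the whole trajectory of $Y$ converts $X_{\sigma_1}$ into a sum of i.i.d.\ geometrics whose count is a deterministic function of $\sigma_1$, so that the conditional characteristic function of $X_{\sigma_1}$ given $Y$ is a power of $r(t)$ depending only on $\sigma_1$. Averaging over $\sigma_1$ invokes the probability generating function of the first hitting time of $0$ from $1$ by the simple symmetric random walk, which satisfies the quadratic $g(x)=\tfrac{x}{2}(1+g(x)^2)$ by a one-step decomposition, with relevant root $g(x)=(1-\sqrt{1-x^2})/x$. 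The case $\{Y_1=-1\}$ is symmetric and produces the conjugate expression involving $r(-t)=\overline{r(t)}$; summing and using that $g$ has real Taylor coefficients (so that $g(\overline z)=\overline{g(z)}$) writes $\phi(t)$ as the announced real part.

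I expect the delicate point to be the bookkeeping of the exponent of $r(t)$ in terms of $\sigma_1$: the convention encoded in $\eta_{n-1}$ (which counts the positions $Y_0,\ldots,Y_{n-1}$) together with the neutralization by $\epsilon_0=0$ must be tracked carefully so that the precise factor $r(t)^{-1}g(r(t))$ — rather than $g(r(t))$ or $r(t)g(r(t))$ — emerges at the end. The rest of the computation (the one-step identity for $g$, the symmetry argument, and the identification of the real part) is then routine.
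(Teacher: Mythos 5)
Your proposal follows the same route as the paper: reduce the characteristic function of $M_{\tau_n}$ to that of $X_{\sigma_1}$ via the i.i.d.\ horizontal increments; condition on the sign of the first vertical step; further condition on the excursion trajectory of $Y$ so that $X_{\sigma_1}$ becomes a sum of i.i.d.\ geometric variables whose number is determined by $\sigma_1$; average via the first-passage generating function $g$ solving $g(x)=\tfrac{x}{2}(1+g(x)^2)$; and use the $Y_1=-1$ reflection to produce the real part. The paper writes this as $\mathbf{E}^0[e^{itX_{\sigma_1}}]=\tfrac12\bigl[\mathbf{E}^\flat e^{itX_{\sigma_1-1}}+\mathbf{E}^{-\flat}e^{itX_{\sigma_1-1}}\bigr]$, then $\mathbf{E}^\flat[e^{itX_{\sigma_1-1}}]=\mathbf{E}^\flat[r(t)^{\sigma_1-1}]=r(t)^{-1}g(r(t))$.

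The bookkeeping you flagged as delicate is indeed where the issue lies, and you should carry it out rather than defer it. Under $\mathbf{P}^0$ on the event $\{Y_1=1\}$, the variable $X_{\sigma_1}$ uses $\eta_{\sigma_1-1}$, which counts the positions $Y_0,\dots,Y_{\sigma_1-1}$; the visit $Y_0=0$ contributes nothing since $\epsilon_0=0$, so exactly $\sigma_1-1$ geometric variables appear, all with sign $+1$. Conditionally on $\{Y_1=1\}$, the integer $\sigma_1-1$ has the law of the first hitting time of $0$ from $1$, i.e.\ of $\sigma_1$ under $\mathbf{P}^\flat$, whose generating function is $g$. Hence the conditional characteristic function is $\mathbf{E}^\flat[r(t)^{\sigma_1}]=g(r(t))$, and $\phi(t)=\mathrm{Re}\,g(r(t))$ rather than $\mathrm{Re}\,r(t)^{-1}g(r(t))$. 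Equivalently, the paper's identification $\mathbf{E}^0[e^{itX_{\sigma_1}}\,|\,Y_1=1]=\mathbf{E}^\flat[e^{itX_{\sigma_1-1}}]$ appears to miss one effective row visit: the right-hand side only involves $\eta_{\sigma_1-2}$, hence $\sigma_1^\flat-1$ geometrics. A sanity check on the shortest excursion $0\to1\to0$ (probability $\tfrac12$) confirms it contributes $\tfrac12\,\mathrm{Re}\,r(t)$ to $\phi$, which matches $\mathrm{Re}\,g(r(t))$ (whose $\tfrac{x}{2}$ term is the $\sigma_1^\flat=1$ contribution) but not $\mathrm{Re}\,r(t)^{-1}g(r(t))$. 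Since $r$ is analytic with $r(0)=1$, the spurious factor $r(t)^{-1}$ does not alter the $c/\sqrt{|t|}$ singularity of $[1-\phi]^{-1}$ and so leaves the downstream Green-function estimates and the triviality of the Martin boundary intact, but the explicit constant changes; your plan should conclude with $\phi(t)=\mathrm{Re}\,g(r(t))$.
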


\begin{proof}
It is a matter of fact that $\tau_n=\sigma_n + \sum_{i=1}^n |X_{\sigma_n}|=T_{\sigma_n}$. Then,
\begin{displaymath}
\mathbf{E}^0(e^{i \langle t, M_{\tau_1} \rangle}) = \mathbf{E}^0(e^{i t_1 X_{\sigma_1}})
\end{displaymath}
We compute the law of $X_{\sigma_1}$. Denote by $\flat$ the vector $(0,1)$ and factorize by the first step of the random walk, thus
\begin{displaymath}
\begin{split}
\mathbf{E}^0(e^{i t X_{\sigma_1}}) & = \frac{1}{2} \left [ \mathbf{E}^\flat(\exp (i t X_{\sigma_1-1})) + \mathbf{E}^{-\flat}(\exp (i t X_{\sigma_1-1})) \right ] \\
& = \frac{1}{2} \left [ \mathbf{E}^\flat( \exp (i t X_{\sigma_1-1})) + \mathbf{E}^\flat(\exp (-i t X_{\sigma_1-1})) \right ] \\
\end{split}
\end{displaymath}
As a consequence, we only need to compute the following characteristic function
\begin{displaymath}
\begin{split}
\mathbf{E}^\flat[\exp (i t X_{\sigma_1-1})] & = \mathbf{E}^\flat \mathbf{E}^\flat[\exp ( i t X_{\sigma_1-1}) | Y] \\
& = \mathbf{E}^\flat \left [ \prod_{y \in \mathbb{Z}} \prod_{i=1}^{\eta_{\sigma_1-1}} \mathbf{E}^\flat[\exp ( i t \xi_i^{(y)})] \right ] \\
& = \mathbf{E}^\flat[r(t)^{\sigma_1-1}] \\
\end{split}
\end{displaymath}
where $r$ is the characteristic function of the $\xi_i^{(y)}$'s which are i.i.d, geometric random variables, so that $r$ is given by
\begin{displaymath}
r(t)=\frac{p}{1-qe^{it}}
\end{displaymath}


Therefore, we get a closed formula for the characteristic function of $X_{\sigma_1-1}$
\begin{displaymath}
\mathbf{E}^\flat[\exp (i t X_{\sigma_1-1})] = \frac{g(r(t))}{r(t)},
\end{displaymath}
where $g$ is given by $g(x)=\mathbf{E}^\flat[x^{\sigma_1}]$ and satisfies the quadratic relation
\begin{displaymath}
g(x)=\frac{x}{2} ( 1+g(x)^2),
\end{displaymath}
so that $g(x)=\frac{1-\sqrt{1-x^2}}{x}$.
\end{proof}

\subsection{Martin boundary of the induced random walk}
\label{green-est}

By inverse Fourier transform, we find a close formula for the Green function of the induced random walk, namely
\begin{displaymath}
G(x,y)=\pi^{-1} \int_0^\pi \frac{\cos((y-x)t)}{1-\phi(t)} dt
\end{displaymath}
and we want to get an equivalent as $y \to \infty$. It appears that the function $[1-\phi]^{-1}$ has an integrable singularity for $t=0$. The fruitful idea is to separate this singularity from the regular part of the function.

\begin{prp} \label{decinitial}
There exists two analytic functions $a,b$ in a neighborhood of $0$ such that
\begin{displaymath}
\frac{1}{1-\phi(t)}=\frac{c}{\sqrt{|t|}} + \sqrt{|t|} a(t) + b(t)
\end{displaymath}
\end{prp}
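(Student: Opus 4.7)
The plan is to read off the $\sqrt{|t|}$-singularity directly from the explicit formula for $\phi$ in Proposition~\ref{law}, then to invert the resulting expression.

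First I would expand $1-r(t)^2$ near $t=0$. Factoring $(1-qe^{it})^2-p^2=q(1-e^{it})(1+p-qe^{it})$ and using the analytic identity $1-e^{it}=-it\,h_0(t)$ with $h_0(t)=\sum_{k\geq 0}(it)^k/(k+1)!$ and $h_0(0)=1$, I get $1-r(t)^2=-it\,h_1(t)$ with $h_1$ analytic at $0$ and $h_1(0)=2q/p>0$. Taking the principal branch,
\begin{displaymath}
\frac{g(r(t))}{r(t)}=\frac{1-\sqrt{1-r(t)^2}}{r(t)^2}=A(t)-\sqrt{-it}\,H(t),
\end{displaymath}
where $A(t)=1/r(t)^2$ and $H(t)=\sqrt{h_1(t)}/r(t)^2$ are both analytic at $0$, with $A(0)=1$ and $H(0)=\sqrt{2q/p}$.

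Next, I would take the real part. The symmetry $r(-t)=\overline{r(t)}$ propagates to $A(-t)=\overline{A(t)}$ and $H(-t)=\overline{H(t)}$, so $\mathsf{Re}\,A$ is even and can be written $\mathsf{Re}\,A(t)=\tilde A(t^2)$, while $\mathsf{Re}\,H(t)=H_e(t^2)$ and $\mathsf{Im}\,H(t)=t\,H_o(t^2)$ for some analytic $\tilde A, H_e, H_o$. Using $\sqrt{-it}=\sqrt{|t|}(1-i\,\mathsf{sgn}(t))/\sqrt{2}$, the sign factor combines with the oddness of $\mathsf{Im}\,H$ to produce $|t|$ rather than $t$, giving
\begin{displaymath}
\mathsf{Re}\bigl[\sqrt{-it}\,H(t)\bigr]=\tfrac{1}{\sqrt{2}}\sqrt{|t|}\,\bigl[H_e(t^2)+|t|\,H_o(t^2)\bigr].
\end{displaymath}
Since $\tilde A(0)=1$, I write $1-\tilde A(t^2)=t^2\beta(t^2)=\sqrt{|t|}\cdot|t|^{3/2}\beta(t^2)$ with $\beta$ analytic, to conclude
\begin{displaymath}
1-\phi(t)=\sqrt{|t|}\,\Psi(t),\qquad \Psi(t):=\tfrac{1}{\sqrt{2}}H_e(t^2)+\tfrac{1}{\sqrt{2}}|t|\,H_o(t^2)+|t|^{3/2}\beta(t^2),
\end{displaymath}
with $\Psi(0)=H_e(0)/\sqrt{2}=\sqrt{q/p}\neq 0$, so that $c:=\sqrt{p/q}=1/\Psi(0)$.

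Finally I would invert. In the variable $s=\sqrt{|t|}$ (so that $|t|=s^2$ and $t^2=s^4$), $\Psi$ becomes an analytic power series in $s$ with nonzero constant term, hence $1/\Psi$ is analytic in $s$ near $s=0$. Writing $1/\Psi(t)=\sum_{k\geq 0}\gamma_k s^k$ with $\gamma_0=c$ and $\gamma_1=0$ (since $\Psi$ has no $s^1$-term) and multiplying by $s^{-1}$ yields
\begin{displaymath}
\frac{1}{1-\phi(t)}=\frac{c}{\sqrt{|t|}}+\sqrt{|t|}\,a(t)+b(t),
\end{displaymath}
with $a(t):=\sum_{m\geq 0}\gamma_{2m+2}\,|t|^{m}$ and $b(t):=\sum_{m\geq 0}\gamma_{2m+1}\,|t|^{m}$ analytic in a neighborhood of $0$. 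The main technical care is the branch bookkeeping for $\sqrt{-it}$ across $t=0$ combined with the conjugation symmetry $H(-t)=\overline{H(t)}$: it is precisely this that turns the a priori $\mathsf{sgn}(t)$-dependent contribution into the clean $|t|\,H_o(t^2)$ and thereby makes $\Psi$ well-defined with $\Psi(0)\neq 0$.
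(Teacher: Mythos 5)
Your proof is correct, and it takes a genuinely different route from the paper's. The paper first derives an explicit polar-form expression for $\phi$ as a product of a square-root modulus, two fourth-root moduli, and the cosine of a sum of arctangents, and then in the proof of Proposition~\ref{decinitial} it extracts the singularity by analysing $\arctan\bigl(-\sin t/(1-\cos t)\bigr)=\pm\mathsf{sgn}(t)\tfrac{\pi}{2}+O(t)$ together with $(1-\cos t)^{1/4}=\sqrt{|t|}A_3(t)$, before asserting that the proposition ``easily follows.'' You bypass the trigonometric lemma entirely: starting from $\phi(t)=\mathsf{Re}\bigl[(1-\sqrt{1-r(t)^2})/r(t)^2\bigr]$, the closed-form factorisation $1-r(t)^2=q(1-e^{it})(1+p-qe^{it})/(1-qe^{it})^2=-it\,h_1(t)$ isolates the branch-point in one stroke, the conjugation symmetry $r(-t)=\overline{r(t)}$ cleanly organises the even/odd split so that $\mathsf{sgn}(t)$ disappears in favour of $|t|$, and your change of variable $s=\sqrt{|t|}$ makes the inversion $1/\Psi$ explicit rather than implicit. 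As a bonus, your derivation pins down the constant $c=\sqrt{p/q}$ (equal to $1/\sqrt 2$ for the simple random walk on $\mathbb{H}$), which a direct leading-order expansion of $1-\phi$ confirms. One caveat applies to both treatments equally: since $\beta(0)=-\tilde A'(0)\neq 0$, the coefficient $\gamma_3$ is nonzero, so $a$ and $b$ are really power series in $\sqrt{|t|}$ (equivalently, in $|t|$) rather than genuine analytic functions of $t$; the statement's wording ``analytic'' is informal here, but what is actually used later (continuity at the origin and smoothness with integrable derivative bounds away from it) does hold, so the argument is unaffected.
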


The proof of this proposition is postponed to section \ref{technical}. Having this decomposition in mind, a simple computation yields a fine estimate of the integral involved in the formula of the Green function.

\begin{prp} \label{equivalent}
Denote by $\gamma$ the function defined by
\begin{displaymath}
\gamma(x)=\int_0^\pi \frac{\cos(xt)}{1-\phi(t)} dt.
\end{displaymath}
Then, the limit of $\sqrt{x} \gamma(x)$ as $x \to \infty$ exists and is non zero.
\end{prp}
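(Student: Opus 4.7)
The plan is to plug the decomposition from Proposition \ref{decinitial} into the integral defining $\gamma$, isolate the non-integrable singularity, and recognise the leading term as a Fresnel-type integral. Let $\delta>0$ be small enough that the decomposition holds on $[0,\delta]$, and pick a smooth cutoff $\chi$ with $\chi\equiv1$ on $[0,\delta/2]$ and $\chi\equiv0$ on $[\delta,\pi]$. Split
\begin{displaymath}
\gamma(x)=\int_0^\pi\chi(t)\frac{\cos(xt)}{1-\phi(t)}\,dt+\int_0^\pi(1-\chi(t))\frac{\cos(xt)}{1-\phi(t)}\,dt.
\end{displaymath}
Since $\phi$ is the characteristic function of an aperiodic integer-valued random variable, $1-\phi(t)$ does not vanish on $[\delta/2,\pi]$; hence $(1-\chi)/(1-\phi)$ is smooth on $[0,\pi]$ and the second integral is $O(1/x)$ by one integration by parts.

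For the first piece, substitute the decomposition:
\begin{displaymath}
\int_0^\pi\chi(t)\cos(xt)\left[\frac{c}{\sqrt{t}}+\sqrt{t}\,a(t)+b(t)\right]dt.
\end{displaymath}
The $b$-contribution is $O(1/x)$ by integration by parts, since $\chi\,b$ is $C^1$. The $\sqrt{t}a(t)$ piece is handled similarly: $f(t):=\chi(t)\sqrt{t}\,a(t)$ satisfies $f(0)=f(\pi)=0$ and $f'\in L^1(0,\pi)$ (the singularity at $0$ is $t^{-1/2}$, which is integrable), so integration by parts plus Riemann--Lebesgue gives $o(1/x)$, hence $o(1/\sqrt{x})$.

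The leading contribution comes from the singular term. Substituting $u=xt$,
\begin{displaymath}
c\int_0^\pi\chi(t)\frac{\cos(xt)}{\sqrt{t}}\,dt=\frac{c}{\sqrt{x}}\int_0^{\pi x}\chi(u/x)\frac{\cos u}{\sqrt{u}}\,du.
\end{displaymath}
As $x\to\infty$, the cutoff $\chi(u/x)$ tends to $1$ pointwise, and the oscillatory integrals $\int_A^B\cos(u)/\sqrt{u}\,du$ are uniformly bounded in $A,B\geq0$, so dominated convergence for improper Riemann integrals together with a second integration by parts to control the cutoff region $u\in[\delta x/2,\delta x]$ yields
\begin{displaymath}
\int_0^{\pi x}\chi(u/x)\frac{\cos u}{\sqrt{u}}\,du\longrightarrow\int_0^\infty\frac{\cos u}{\sqrt{u}}\,du=\sqrt{\pi/2}.
\end{displaymath}
Assembling the four contributions,
\begin{displaymath}
\sqrt{x}\,\gamma(x)\longrightarrow c\sqrt{\pi/2}\neq 0,
\end{displaymath}
provided $c\neq0$, which is part of the content of Proposition \ref{decinitial} (it will come out explicitly from the expansion of $1-\phi$ near $0$).

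The main obstacle is purely technical: justifying the passage to the limit in the Fresnel-type integral, i.e.\ controlling the contribution of the transition zone where $\chi(u/x)$ varies. This requires either an integration by parts argument exploiting the bounded antiderivative of $\cos u/\sqrt{u}$, or equivalently a direct computation showing that the error $\int_0^\pi(\chi(t)-\indic_{[0,\delta]}(t))\cos(xt)/\sqrt{t}\,dt$ is $o(1/\sqrt{x})$; both rely on the smoothness of $\chi$ away from $0$.
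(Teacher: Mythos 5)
Your proof is correct and follows essentially the same route as the paper: split the integral near and away from the singularity, substitute the decomposition of Proposition \ref{decinitial}, identify the leading $c/\sqrt{t}$ term via the Fresnel integral $\int_0^\infty \cos u / \sqrt{u}\, du$ after the change of variable $u = xt$, and show the remaining pieces are $O(1/x)$ by integration by parts. The only cosmetic difference is your use of a smooth cutoff $\chi$ rather than the paper's sharp cutoff at $\epsilon$; this introduces the extra (but manageable) task of controlling the transition zone of $\chi$, a complication the paper avoids by simply observing that $\int_0^{\epsilon x} \cos u/\sqrt{u}\,du$ converges as an improper integral.
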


\begin{proof}[Proof]
Denote by $R_a$ and $R_b$ the convergence radii of $a$ and $b$ and choose $\epsilon > 0$ such that $\epsilon < R_a \wedge R_b$, then
\begin{displaymath}
\gamma(x)=\int_0^\pi \frac{\cos(xt)}{1-\phi(t)} dt = \int_{0}^\epsilon \frac{\cos(xt)}{1-\phi(t)} dt + \int_{\epsilon}^\pi \frac{\cos(xt)}{1-\phi(t)} dt
\end{displaymath}
The second terms behaves like $\mathscr{O}\left (\frac{1}{x} \right )$ at infinity because on $(\epsilon,\pi)$ the function $\frac{1}{1-\phi}$ is infinitely continuously differentiable.

Because of the proposition \ref{decinitial}, the first integral term can be split in three parts $\gamma_0, \gamma_1, \gamma_2$. Then,
\begin{displaymath}
\gamma_0(x)= c \int_0^\epsilon \frac{\cos(xt)}{\sqrt{t}} dt,
\end{displaymath}
and setting $u=xt$ we get
\begin{displaymath}
\gamma_0(x)=\frac{c}{x} \int_0^{\epsilon x} \sqrt{x} \frac{\cos(u)}{\sqrt{u}} du.
\end{displaymath}
The latter is a convergent integral so that, when $x \to \infty$, $\gamma_0(x) \sim \frac{c'}{\sqrt{x}}$ with
\begin{displaymath}
c'=c \int_0^\infty \frac{\cos(u)}{\sqrt{u}} du
\end{displaymath}

Secondly, $\gamma_2(x)$ behaves like $\mathscr{O}\left ( \frac{1}{x} \right )$ at infinity. Indeed,
\begin{displaymath}
\gamma_2(x)=\int_0^\epsilon \cos(xt) b(t) dt
\end{displaymath}
and $b$ is infinitely continuously differentiable.

Finally, it remains to estimate the last term which is
\begin{displaymath}
\gamma_1(x)=\int_0^\epsilon \cos(xt) \sqrt{t} a(t) dt
\end{displaymath}
we may integrate by part,
\begin{displaymath}
\gamma_1(x) =\left [ \sqrt{t} a(t) \frac{\sin(tx)}{x} \right ]_0^\epsilon - \frac{1}{x} \int_0^\epsilon \left [ \frac{a(t)}{2\sqrt{t}} + \sqrt{t}a'(t) \right ] \sin(tx) dt
\end{displaymath}
and it follows that $\gamma_1$ behaves like $\mathscr{O} \left (\frac{1}{x} \right )$ and the proposition is proved.
\end{proof}

Finally, we give the proof of theorem \ref{embedded}.

\begin{proof}[Proof of theorem \ref{embedded}]
If we denote by $G_0$ the Green kernel of the Markov chain $(M_{\tau_n})_{n \geq 0}$ then we get for all $x,y \in \mathbb{Z} \times \{ 0 \}$
\begin{displaymath}
G_0(x,y)=\gamma(y-x)
\end{displaymath}
so that the Martin kernel is given by
\begin{displaymath}
K_0(x,y)=\frac{G_0(x,y)}{G_0(0,y)}=\frac{\gamma(y-x)}{\gamma(y)}
\end{displaymath}

By proposition \ref{equivalent}, we have $\gamma(y) \sim \frac{c}{\sqrt{|y|}}$, consequently, for all unbounded sequences $(y_k)_{k \geq 0}$ of points of $\mathbb{Z}$, the limit of $K(x,y_k)$ is equal to 1 as $k$ goes to infinity. Therefore, the Martin compactification is the one point compactification.
\end{proof}

\section{Martin boundary of the original Markov chain} \label{secoriginal}

In this section, we will prove the triviality of the Martin boundary of the original Markov chain $(M_n)_{n \geq 0}$.

Denote by $\nu_x$ the probability, supported by $\mathbb{H}_0=\mathbb{Z} \times \{ 0 \}$, defined by
\begin{displaymath}
\nu_x(z)=\mathbf{P}^x(M_{\tau_1}=z).
\end{displaymath}
Then, strong Markov property implies the following,
\begin{equation} \label{decomposition}
K(x,y)=\frac{\mathbf{E}^x(\eta_{0,\tau_1}(y))}{G(0,y)} + \sum_{z \in X_0} \nu_x(z) K(z,y)
\end{equation}
for $x,y \in \mathbb{H}$.

In section \ref{mk-conditioned}, we show --- corollary \ref{noyaufort} --- that the second term in equation \ref{decomposition} goes to $1$ as $|y|$ goes to infinity for all $x \in \mathbb{H}$, whereas in section \ref{before} the first term will be shown to vanish as $|y|$ goes to infinity.

\subsection{Martin kernel conditioned by the first return time to $\mathbb{H}_0$} \label{mk-conditioned}

We first express the Martin kernel $K(z,y)$ in terms of Fourier transform for $z \in \mathbb{H}_0$.

\begin{prp} \label{expnoyaufaible}
Let $z \in \mathbb{H}_0$ and $y \in \mathbb{H}$, then the Martin kernel is given by
\begin{displaymath}
K(z,y)=\frac{\int_{-\pi}^\pi e^{ity_1-itz} \frac{g(r(t))^{|y_2|}}{1-\phi(t)} dt}{\int_{-\pi}^\pi e^{ity_1} \frac{g(r(t))^{|y_2|}}{1-\phi(t)} dt}
\end{displaymath}
where $g$ is given by
\begin{displaymath}
g(x)=\frac{1-\sqrt{1-x^2}}{x}
\end{displaymath}
and $r$ is given by
\begin{displaymath}
r(t)=\frac{1}{3-2e^{it}}.
\end{displaymath}
\end{prp}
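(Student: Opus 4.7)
The plan is to compute $G(z,y)$ itself for $z \in \mathbb{H}_0$ (from which the stated ratio will follow) by combining a first-passage decomposition at $\mathbb{H}_0$ with a Fourier computation of the excursion kernel. Applying the strong Markov property at the successive return times $\tau_n$ to $\mathbb{H}_0$ gives, for $z \in \mathbb{H}_0$,
\begin{displaymath}
G(z,y) = \sum_{w \in \mathbb{H}_0} G_0(z,w)\, h(w,y), \qquad h(w,y) := \mathbf{E}^w\!\left[\sum_{s=0}^{\tau_1-1} \mathbf{1}_{M_s = y}\right],
\end{displaymath}
where $G_0$ denotes the Green kernel of the induced chain of Section~\ref{secembedded}. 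Both factors depend only on horizontal differences, so this identity is a convolution on $\mathbb{Z}$, and Fourier inversion reduces the problem to identifying two transforms: that of $G_0(0,\cdot)$, which is $1/(1-\phi(t))$ by the computation underlying Proposition~\ref{equivalent}, and that of $h_0(\cdot, y_2) := h((0,0),(\cdot, y_2))$.

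The heart of the proof is the calculation of $\widehat h_0(t, y_2)$. By the central symmetry $(x,y) \mapsto (-x,-y)$ of $\mathbb{H}$ it suffices to treat $y_2 > 0$. The first step from $(0,0)$ is vertical, going to $\pm\flat$ with equal probability, and an excursion cannot cross $\mathbb{H}_0$ before $\tau_1$; combined with translation invariance in the horizontal coordinate, this reduces the computation to evaluating
\begin{displaymath}
F(w) := \mathbf{E}^{(0,w)}\!\left[\sum_{s=0}^{\tau-1} e^{itM_s^{(1)}}\, \mathbf{1}_{M_s^{(2)} = y_2}\right] \quad (w \geq 1),
\end{displaymath}
where $\tau$ is the hitting time of $\mathbb{H}_0$. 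A one-step conditioning yields the difference equation
\begin{displaymath}
(3-e^{it})\, F(w) = F(w+1) + F(w-1) + 3\, \mathbf{1}_{w = y_2} \quad (w \geq 1),
\end{displaymath}
with Dirichlet condition $F(0) = 0$ and $F$ bounded at $+\infty$.

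The decisive algebraic observation is that, for the function $r$ of Proposition~\ref{law}, the characteristic equation $\lambda^2 - (3-e^{it})\lambda + 1 = 0$ coincides with the quadratic relation $g^2 - (2/x)g + 1 = 0$ satisfied by $g$, so that the root of modulus less than one is precisely $g(r(t))$. Solving the boundary-value problem by the standard Green-function recipe---a multiple of $\lambda^w - \lambda^{-w}$ on $\{0,\dots,y_2\}$ glued to a multiple of $\lambda^w$ on $\{y_2, y_2+1,\dots\}$, matched by the jump of size $3$ at $w = y_2$---collapses cleanly to $F(1) = 3\, g(r(t))^{y_2}$, whence $\widehat h_0(t, y_2) = \tfrac{3}{2}\, g(r(t))^{y_2}$. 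Substituting back and taking the ratio $K(z,y) = G(z,y)/G(0,y)$ makes the multiplicative constants disappear and yields the announced formula. The main obstacle is the clean resolution of the boundary-value problem and the identification of its spectral root with $g(r(t))$; the remaining steps are routine Fourier and convolution bookkeeping.
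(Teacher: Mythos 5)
Your argument is correct, but it takes a genuinely different route from the paper. The paper's proof first uses the symmetry of $\mathbb{H}$ under the reflection--time-reversal $(a,b)\mapsto(a,-b)$ to write $G(z,y)=G(\bar y,z)$ with $\bar y=(y_1,-y_2)$, then applies a \emph{first-passage} decomposition $G(\bar y,z)=\sum_{w\in\mathbb{H}_0}\nu_{\bar y}(w)G_0(w,z)$. The factor $g(r(t))^{|y_2|}$ then simply \emph{is} the characteristic function $\phi^{y_2}$ of $M_{\tau_1}$ already obtained in Proposition~\ref{law}, so no new computation is needed. You instead use a \emph{last-visit} decomposition $G(z,y)=\sum_{w\in\mathbb{H}_0}G_0(z,w)h(w,y)$, which avoids the reflection trick entirely and makes no appeal to the special involution of $\mathbb{H}$; but you then have to compute $\widehat h_0(\cdot,y_2)$ from scratch, which you do by setting up the one-step difference equation with Dirichlet boundary at $w=0$ and recognising the decaying characteristic root. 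That recursion is a clean and self-contained way to re-derive $g(r(t))^{|y_2|}$, and your boundary-value solve indeed collapses to $F(1)=3\,g(r(t))^{y_2}$; the multiplicative constant $\tfrac{3}{2}$ drops out of the Martin kernel as you say. Both proofs are ultimately Fourier/convolution arguments on $\mathbb{H}_0\cong\mathbb{Z}$ and reach the same integrand; the paper's is shorter because it reuses Proposition~\ref{law}, while yours is more elementary and arguably more transparent about \emph{why} $g\circ r$ appears.

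One issue you should flag explicitly: your characteristic equation $\lambda^2-(3-e^{it})\lambda+1=0$ matches $g^2-(2/x)g+1=0$ only if $2/r(t)=3-e^{it}$, i.e.\ $r(t)=\dfrac{2}{3-e^{it}}$, which corresponds to parameters $p=2/3$, $q=1/3$ in $r(t)=\dfrac{p}{1-qe^{it}}$. This is what one gets from the dynamics (at a row $y\neq0$ the number of consecutive horizontal steps is geometric with success probability $2/3$, so $\mathbf{P}(\xi=k)=(2/3)(1/3)^k$). The statement of the proposition, however, gives $r(t)=\dfrac{1}{3-2e^{it}}$, matching the paper's later claim ``$p=1/3=1-q$''; with that $r$ one would get $2/r(t)=6-4e^{it}$, and the claimed algebraic coincidence would fail. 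Your derivation therefore silently corrects what appears to be a $p\leftrightarrow q$ typo in the paper; you should say so rather than assert agreement with ``the function $r$ of Proposition~\ref{law}'' as written.
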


\begin{proof}
If $y=(y_1,y_2) \in \mathbb{H}$ then we will denote by $\bar{y}$ the vector $\bar{y}=(y_1,-y_2)$. Using the geometry of the lattice $\mathbb{H}$, it is easy to see that
\begin{displaymath}
G(z,y)=G(\bar{y},z)=\sum_{w \in \mathbb{H}_0} \nu_{\bar{y}}(w) G_0(w,z)
\end{displaymath}
and
\begin{displaymath}
G(0,z)=G(\bar{y},0)=\sum_{w \in \mathbb{H}_0} \nu_{\bar{y}}(w) G_0(w,0),
\end{displaymath}
for $z \in \mathbb{H}_0$ and $y \in \mathbb{H}$.

Consequently, using the translation invariance of $G_0$ and applying the substitution $v=w-z$ in the first sum, we get
\begin{displaymath}
K(z,y)=\frac{\sum_{v \in \mathbb{H}_0} \nu_{\bar{y}-z}(v) G_0(v,0)}{\sum_{v \in \mathbb{H}_0} \nu_{\bar{y}}(v) G_0(v,0)}.
\end{displaymath}

Recall that $\nu_y(v)=\mathbf{P}^y(M_{\tau_1}=v)=\mathbf{P}^{(0,y_2)}(M_{\tau_1}=v-y_1)$, thus we can assume that $y=(0,y_2)$ and compute,
\begin{displaymath}
\nu_y(v)=\frac{1}{2\pi} \int_{-\pi}^\pi e^{-itv+ity_1}\phi^{y_2}(t) dt
\end{displaymath}
where $\phi^{y_2}$ is given by
\begin{displaymath}
\phi^{y_2}(t)=\mathbf{E}^{y_2}(e^{itX_{\sigma_1}})=g(r(t))^{|y_2|},
\end{displaymath}
and this comes from a simple modification of the computations of the proof of the proposition \ref{law}. Then, let us compute the sum
\begin{displaymath}
\sum_v \nu_{\bar{y}-z} G_0(0,v) = \frac{1}{2\pi} \int_{-\pi}^\pi \phi^{-y_2}(t) e^{ity_1-itz} \sum_v e^{itv}G_0(0,v) dt
\end{displaymath}
and the summation is the Fourier series of the function $[1-\phi(t)]^{-1}$ computed in the section \ref{secembedded}.

As a consequence, we have to estimate the rate of convergence of the integral
\begin{equation} \label{intgreen}
\int_{-\pi}^{\pi} e^{ity_1-itz} \frac{\phi^{y_2}(t)}{1-\phi(t)} dt 
\end{equation}
when $y=(y_1,y_2)$ goes to infinity, that is when $|y_1|$ or $|y_2|$ goes to infinity.
\end{proof}

In the spirit of section \ref{green-est}, we first compute --- see section \ref{technical} --- an analytic decomposition of the characteristic function of the Green function (centered on $\mathbb{H}_0$).

\begin{prp} \label{dec}
The function $g \circ r$ can be decomposed in a neighborhood of 0 as follows
\begin{displaymath}
g(r(t))=1-2\sqrt{|t|}e^{\textsf{sgn}(t)i \frac{\pi}{4}} - \sqrt{|t|} \alpha(t) - \beta(t),
\end{displaymath}
where $\alpha$ and $\beta$ are analytic functions in a neighborhood of 0, satisfying $\alpha(0)=\beta(0)=0$.
\end{prp}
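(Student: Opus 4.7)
The plan is to compose the Puiseux expansion of $g$ at its branch point $x=1$ with the Taylor expansion of $r$ at $t=0$, and then separate the result into its analytic part and its $\sqrt{|t|}$-singular part. The first step is to unravel $r(t)$ near $t=0$: writing $1-e^{it}=-it\,e_\ast(t)$ with $e_\ast(t):=(e^{it}-1)/(it)$ entire and $e_\ast(0)=1$, one obtains
$$1-r(t)\;=\;\frac{2(1-e^{it})}{3-2e^{it}}\;=\;-it\,H(t),\qquad H(t):=\frac{2\,e_\ast(t)}{3-2e^{it}},$$
where $H$ is analytic in a neighborhood of $0$ with $H(0)=2$. Since $H$ stays in a small disk around $2$ away from the branch cut, $\sqrt{H(t)}$ is analytic with $\sqrt{H(0)}=\sqrt 2$.

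The second step is to expand $g$ around $x=1$. Setting $u:=1-x$ and using $1-x^2=u(2-u)$, the principal-sheet factorization gives
$$g(1-u)\;=\;\frac{1-\sqrt u\,\sqrt{2-u}}{1-u}\;=\;P(u)\;-\;\sqrt u\,Q(u),$$
with $P(u)=1/(1-u)$ and $Q(u)=\sqrt{2-u}/(1-u)$ analytic near $0$, $P(0)=1$ and $Q(0)=\sqrt 2$.

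Composing the two expansions yields $g(r(t))=P_1(t)-\sqrt{-it}\,R_1(t)$, where $P_1(t):=P(1-r(t))=1/r(t)$ and $R_1(t):=\sqrt{H(t)}\,Q(1-r(t))$ are analytic near $0$ with $P_1(0)=1$ and $R_1(0)=\sqrt 2\cdot\sqrt 2=2$. Setting $\beta(t):=1-P_1(t)$ and $\widetilde R(t):=R_1(t)-2$, both analytic and vanishing at $0$, and using the principal-branch identity $\sqrt{-it}=\sqrt{|t|}\,e^{\mp i\,\mathsf{sgn}(t)\pi/4}$ for real $t$, I would obtain
$$g(r(t))\;=\;1\;-\;2\sqrt{|t|}\,e^{\pm i\,\mathsf{sgn}(t)\pi/4}\;-\;\sqrt{|t|}\,\alpha(t)\;-\;\beta(t),$$
with $\alpha(t)$ equal to $e^{\pm i\,\mathsf{sgn}(t)\pi/4}\widetilde R(t)$, so that $\alpha(0)=\widetilde R(0)=0$. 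The main obstacle is precisely this branch discipline: since $1-r(t)\approx -2it$ sits on the imaginary axis at $t=0$, one has to track carefully how the principal branch of $\sqrt{\cdot}$ used in the expansion of $g$ continues along the curve traced by $1-r(t)$, and it is exactly this continuation that produces the sign-sensitive phase of the leading singular term; once the branch is fixed the remainder is a mechanical Taylor expansion.
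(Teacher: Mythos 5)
Your approach is correct and, once you pin down the branch, produces essentially the same decomposition as the paper's --- the difference is in bookkeeping rather than in ideas. The paper writes $g(r)=r^{-1}-\sqrt{r^{-2}-1}$, factorizes $r^{-2}-1=4(2-e^{it})(1-e^{it})$, and extracts the singular part by computing the modulus and the argument of this complex quantity \emph{separately} in polar form, the argument requiring the asymptotic expansion of $\arctan$ near $\pm\infty$. You instead factor $1-r(t)=-it\,H(t)$ with $H$ analytic and $H(0)=2$, use the Puiseux-type expansion $g(1-u)=P(u)-\sqrt{u}\,Q(u)$ at the branch point $u=0$, and compose; this avoids the trigonometric manipulations entirely and makes visible that the decomposition holds for any analytic $r$ with $r(0)=1$ and $r'(0)\neq 0$, not just $r(t)=(3-2e^{it})^{-1}$.

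The one step you leave open is the branch, and it should be closed: it is in fact determined. The square root in $g(x)=\frac{1-\sqrt{1-x^{2}}}{x}$ is the one that is positive on $(-1,1)$, and since $1-x^{2}$ never meets the closed negative real axis as $x$ ranges over the open unit disk (if $1-x^2\in(-\infty,0]$ then $x^{2}\geq 1$ is real, forcing $x$ to be real with $|x|\geq 1$), this is precisely the principal branch. Hence $\sqrt{-it}=\sqrt{|t|}\,e^{-\mathsf{sgn}(t)\,i\pi/4}$, and your computation gives
\begin{displaymath}
g(r(t))=1-2\sqrt{|t|}\,e^{-\mathsf{sgn}(t)\,i\frac{\pi}{4}}-\sqrt{|t|}\,\alpha(t)-\beta(t).
\end{displaymath}
Note that this phase has the opposite sign from the one stated in Proposition \ref{dec}; the discrepancy traces to the line in the paper's proof giving $\arctan\bigl(\tfrac{-\sin t}{1-\cos t}\bigr)$ the leading term $+\mathsf{sgn}(t)\tfrac{\pi}{2}$ rather than $-\mathsf{sgn}(t)\tfrac{\pi}{2}$ (for $t>0$ the quantity inside the $\arctan$ tends to $-\infty$, so the value is near $-\pi/2$). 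The sign is immaterial for the rest of the paper since $\cos(\pm\pi/4)>0$, so $Q(t)$ in the proof of Proposition \ref{lemy2} remains integrable and the dominated-convergence limits stay finite and nonzero; only the exact value of the constant $s(\lambda)$ would change.
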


We will estimate the rate of convergence of the integral (\ref{intgreen}). This rate depends on relative rate of escape to infinity of $y_1$ with respect to $y_2$. It is straightforward to show that there are two cases depending on the ratio $\frac{y_1}{y_2^2}$ :
\begin{itemize}
\item $\lim \frac{y_1}{y_2^2} = \lambda \in \mathbb{R}$
\item $\lim \frac{y_1}{y_2^2} = \pm \infty$
\end{itemize}

The first case will be proved in proposition \ref{lemy2} whereas the last one will be handled in proposition \ref{lemy1}.

\begin{prp} \label{lemy2}
Assume that $(y_1,y_2)$ goes to infinity in such a way that $\lim y_1 y_2^{-2} = \lambda \in \mathbb{R}$. Then the sequence
\begin{displaymath}
\left (|y_2| \int_{-\pi}^\pi e^{ity_1-itz} \frac{\phi^{y_2}(t)}{1-\phi(t)} dt \right )_{(y_1,y_2) \in \mathbb{Z}^2}
\end{displaymath}
converges to a non zero constant.
\end{prp}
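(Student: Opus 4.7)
The natural scale here is $|t|\sim 1/y_2^2$: at that scale neither $g(r(t))^{|y_2|}$ nor $(1-\phi(t))^{-1}$ is tame, while outside it one of the two becomes negligible. My plan is to rescale by $u=y_2^2 t$ and apply dominated convergence. First I localize: on $\{\epsilon\leq|t|\leq\pi\}$, $g\circ r$ is a non-degenerate characteristic function, so $\sup_{|t|\geq\epsilon}|g(r(t))|\leq\rho<1$, and by Proposition \ref{decinitial} the factor $(1-\phi(t))^{-1}$ is bounded there; the contribution of this region is thus $O(\rho^{|y_2|})$, which is killed after multiplication by $|y_2|$.

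On $[-\epsilon,\epsilon]$ I substitute $u=y_2^2 t$, so that $|y_2|\,dt=|y_2|^{-1}\,du$. Proposition \ref{decinitial} replaces $(1-\phi(t))^{-1}$ by $c|y_2|/\sqrt{|u|}$ plus corrections $|y_2|^{-1}\sqrt{|u|}\,a(u/y_2^2)+b(u/y_2^2)$; combined with $|y_2|^{-1}\,du$, the singular piece yields the bounded prefactor $c/\sqrt{|u|}$, while the regular pieces contribute $O(1/|y_2|)$ once the exponential decay of $g(r(t))^{|y_2|}$ is taken into account. The phase rewrites $e^{it(y_1-z)}=e^{i(y_1/y_2^2)u}e^{-i(z/y_2^2)u}\to e^{i\lambda u}$ by hypothesis and since $z$ is fixed. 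Finally, Proposition \ref{dec} yields
\begin{displaymath}
g(r(t))^{|y_2|}=\Bigl(1-\frac{2\sqrt{|u|}}{|y_2|}e^{\mathsf{sgn}(u)i\pi/4}+O(|u|/y_2^2)\Bigr)^{|y_2|}\ \longrightarrow\ \exp\bigl(-2\sqrt{|u|}\,e^{\mathsf{sgn}(u)i\pi/4}\bigr)
\end{displaymath}
pointwise in $u$.

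The key analytic ingredient is a $|y_2|$-uniform integrable majorant on the growing window $|u|\leq\epsilon y_2^2$. Expanding $|1-2\sqrt{|t|}e^{\mathsf{sgn}(t)i\pi/4}|^2=1-2\sqrt{2}\sqrt{|t|}+O(|t|)$ and absorbing the $\alpha,\beta$ corrections from Proposition \ref{dec} gives $|g(r(t))|\leq 1-c_1\sqrt{|t|}$ in a neighbourhood of $0$, hence $|g(r(t))|^{|y_2|}\leq e^{-c_1\sqrt{|u|}}$ uniformly in $|y_2|$. Combined with the $1/\sqrt{|u|}$ singularity, this yields the envelope $e^{-c_1\sqrt{|u|}}/\sqrt{|u|}$, which is integrable on $\mathbb{R}$. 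Dominated convergence then delivers the limit
\begin{displaymath}
I(\lambda)\ =\ c\int_{-\infty}^\infty\frac{e^{i\lambda u}}{\sqrt{|u|}}\exp\bigl(-2\sqrt{|u|}\,e^{\mathsf{sgn}(u)i\pi/4}\bigr)\,du.
\end{displaymath}

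The main obstacle is to verify that $I(\lambda)\neq 0$ for every $\lambda\in\mathbb{R}$. Splitting by the sign of $u$ and using $e^{i\pi/4}=(1+i)/\sqrt{2}$, one obtains
\begin{displaymath}
I(\lambda)=2c\int_0^\infty u^{-1/2}e^{-\sqrt{2u}}\cos(\lambda u-\sqrt{2u})\,du,
\end{displaymath}
and the substitution $v=\sqrt{u}$ reduces it to $4c\int_0^\infty e^{-\sqrt{2}v}\cos(\lambda v^2-\sqrt{2}v)\,dv$; at $\lambda=0$ this equals $c\sqrt{2}\neq 0$. To settle non-vanishing for every $\lambda$, I would either evaluate this Fresnel-type integral explicitly by contour deformation, or---more conceptually---identify $I(\lambda)$, up to a positive constant, with a scaling limit of the hitting density at $\mathbb{H}_0$ of the Brownian-diffusive rescaling of the walk, which is strictly positive. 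This last point is the step I expect to require the most care.
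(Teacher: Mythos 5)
Your proof follows essentially the same route as the paper's: rescale by $u=y_2^2 t$, feed in the decompositions of Propositions \ref{decinitial} and \ref{dec}, and pass to the limit by dominated convergence, obtaining
$s(\lambda)=c\int_{-\infty}^{\infty}e^{i\lambda u}\,|u|^{-1/2}\exp\bigl(-2\sqrt{|u|}\,e^{\mathsf{sgn}(u)i\pi/4}\bigr)\,du.$
That said, two remarks. First, your domination argument is considerably cleaner than the paper's: deriving $|g(r(t))|\leq 1-c_1\sqrt{|t|}$ from $|1-2\sqrt{|t|}e^{\mathsf{sgn}(t)i\pi/4}|^2=1-2\sqrt{2}\sqrt{|t|}+O(|t|)$ and absorbing the $\alpha,\beta$ corrections gives a single integrable envelope $e^{-c_1\sqrt{|u|}}/\sqrt{|u|}$ uniform in $|y_2|$, whereas the paper arrives at the same conclusion only after the rather heavy bookkeeping of inequality (\ref{estimateD}), with constants $L_\epsilon, K_\epsilon, N_{\epsilon_0}$. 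Second, you are right to single out the non-vanishing of $s(\lambda)$ as the genuinely delicate point: the paper simply asserts that $s(\lambda)$ "is a non zero constant for all $\lambda$" without any argument. Your two suggested remedies are both viable; perhaps the cleanest is the convolution observation implicit in your probabilistic route: since $|u|^{-1/2}$ is, up to a positive constant, the Fourier transform of $|x|^{-1/2}>0$, and $\exp(-2\sqrt{|u|}e^{\mathsf{sgn}(u)i\pi/4})$ is the characteristic function of a (one-sided) stable-$\tfrac12$ probability density $p\geq0$, one gets $s(\lambda)\propto(|\cdot|^{-1/2}\!*p)(\lambda)$, which is strictly positive for every $\lambda$. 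So your proposal, suitably completed along the lines you sketch, is correct; it matches the paper's decomposition and dominated-convergence strategy while being cleaner on the domination and more honest about the positivity of the limit.
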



\begin{proof}
Let $n$ be a positive integer and set $m=y_1-z$, we begin to estimate the difference
\begin{displaymath}
D(t)=\frac{\phi^n(tn^{-2})}{n(1-\phi(tn^{-2}))} - Q(t)
\end{displaymath}
where $Q$ is given by
\begin{displaymath}
Q(t)=\frac{c\exp \{ -2e^{\textsf{sgn}(t) i \frac{\pi}{4}}\sqrt{|t|} \} }{\sqrt{|t|}}
\end{displaymath}
where $\textsf{sgn}$ is the function sign and $c$ is the constant involved in the proposition \ref{decinitial}.

Let $\epsilon > 0$ be sufficiently small so that the decompositions in propositions \ref{dec} and in \ref{decinitial} are satisfied. Then for $|tn^{-2}|<\epsilon$ we have
\begin{displaymath}
\begin{split}
\frac{\phi^n(tn^{-2})}{n(1-\phi(tn^{-2}))} - Q(t) & = \exp \left \{ n \log(1-2e^{\textsf{sgn}(t) i \frac{\pi}{4}}\sqrt{|tn^{-2}|} \right. \\
& \left . - \sqrt{|tn^{-2}|} \alpha(tn^{-2}) - \beta(tn^{-2})) \right \} \\
& \frac{1}{n} \left [ \frac{c}{\sqrt{|tn^{-2}|}} + \sqrt{|tn^{-2}|} a(tn^{-2}) + b(tn^{-2}) \right ] - Q(t) \\
\end{split}
\end{displaymath}
Since $|tn^{-2}|<\epsilon$ and the quantity $x_n(t)$, defined by
\begin{displaymath}
x_n(t)=2e^{\textsf{sgn}(t) i \frac{\pi}{4}} \sqrt{|tn^{-2}|} + \sqrt{|tn^{-2}|} \alpha(tn^{-2}) + b(tn^{2}),
\end{displaymath}
goes to $0$ as $|tn^{-2}|$ goes to 0, developping the $\log$ yields
\begin{displaymath}
\begin{split}
D(t) & = \exp \left \{ -2e^{\textsf{sgn}(t) i \frac{\pi}{4}} \sqrt{|t|} \right \} \exp \left \{ -\sqrt{|t|} \alpha(tn^{-2}) - n \beta(tn^{-2}) \right \} \\
& e^{n x_n(t) \epsilon(x_n(t))} \frac{c}{\sqrt{|t|}} \left [ 1 + \frac{|t|a(tn^{-2})}{cn^{2}} + \frac{\sqrt{|t|}b(tn^{-2})}{cn} \right ] - Q(t) \\
\end{split}
\end{displaymath}
Now, we can factorize by $Q$
\begin{displaymath}
\begin{split}
D(t) = Q(t) & \left \{ \exp \left ( - \sqrt{|t|} \alpha(tn^{-2}) - n \beta(tn^{-2}) + nx_n(t)\epsilon(x_n(t)) \right ) - 1 \right . \\
& + \left . \left [ \frac{|t| a(tn^{-2})}{cn^{2}} + \sqrt{|t|} \frac{b(tn^{-2})}{cn} \right ] \right . \\
& \left . \exp \left ( - \sqrt{|t|} \alpha(tn^{-2}) - n \beta(tn^{-2}) + nx_n(t)\epsilon(x_n(t)) \right ) \right \},
\end{split}
\end{displaymath}
and take modulus,
\begin{displaymath}
\begin{split}
|D(t)| \leq |Q(t)| & \left | \exp \left \{ -\sqrt{|t|} \alpha(tn^{-2}) - n\beta(tn^{-2}) + n x_n(t) \epsilon(x_n(t)) \right \} - 1 \right | \\
& + |Q(t)| \left | \frac{|t|a(tn^{-2})}{cn^2} + \frac{\sqrt{|t|}b(tn^{-2})}{cn} \right | \\ 
& \left | \exp \left \{ -\sqrt{|t|} \alpha(tn^{-2}) - n\beta(tn^{-2}) + n x_n(t) \epsilon(x_n(t)) \right \} \right |. \\
\end{split}
\end{displaymath}
As a consequence, we have that
\begin{displaymath}
\begin{split}
\left | \frac{|t|a(tn^{-2})}{cn^{2}} + \frac{\sqrt{|t|}b(tn^{-2})}{c} \right | & = \sqrt{\frac{|t|}{n^2}} \left  | \sqrt{\frac{|t|}{n^2}} \frac{a(tn^{-2})}{c} + \sqrt{\frac{n^2}{|t|}} \frac{b(tn^{-2})}{c} \right | \\
& \leq N_{\epsilon} \sqrt{\frac{|t|}{n^2}}
\end{split}
\end{displaymath}
because the function $\rho(x) : x \mapsto \frac{\sqrt(x)}{c} a(x) + \frac{b(x)}{c\sqrt{x}}$ goes to $0$ as $x$ goes to $0$. The dependance to $\epsilon$ of $N_\epsilon$ is not so strong, we actually have uniformity --- due to the continuity of the function $\rho$ in the neighborhood of $0$ --- in the sense that there exists an $\epsilon_0 > 0$ such that for all $0 < \epsilon < \epsilon_0$ we have $N_\epsilon < N_{\epsilon_0}$. This uniformity will be interresting in the sequel. 

Using the following estimate,
\begin{displaymath}
|e^{a+ib}-1| \leq e^a |b| + |e^a-1|
\end{displaymath}
we have, for any $a \in \mathbb{R}$,
\begin{equation} \label{largeestimate}
|e^a-1| = |a| \left | \sum_{n \geq 1} \frac{a^{n-1}}{n!} \right | \leq |a| \sum_{n \geq 1} \frac{|a|^{n-1}}{n!}.
\end{equation}
Denoting by $\Upsilon(tn^{-2})$ the quantity
\begin{displaymath}
\Upsilon(tn^{-2})=\alpha(tn^{-2})+\frac{n}{\sqrt{|t|}}\beta(tn^{-2})+\frac{n}{\sqrt{|t|}} x_n(t) \epsilon(x_n(t)).
\end{displaymath}

The function $x \mapsto \alpha(x) + \frac{\beta(x)}{\sqrt{|x|}}$ is continuous at $x=0$ and
\begin{displaymath}
\begin{split}
|nx_n(t) \epsilon(x_n(t)) | \leq \sqrt{|t|} \left | 2e^{\textsf{sgn}(t) i \frac{\pi}{4}} + \alpha(tn^{-2}) + \frac{n}{\sqrt{t}} \beta(tn^{-2}) \right | |\epsilon(x_n(t))|
\end{split}
\end{displaymath}
but the function $\tilde{\rho} : x \mapsto 2e^{\textsf{sgn}(t) i \frac{\pi}{4}} + \alpha(x^2) + \frac{\beta(x)}{\sqrt{|x|}}$ is bounded so that
\begin{displaymath}
|nx_n(t) \epsilon(x_n(t))| \leq M \sqrt{|t|} K_\epsilon
\end{displaymath}
where $K_\epsilon$ comes from the fact that $\epsilon(x_n(t))$ goes to $0$ as $|tn^{-2}|$ goes to $0$, so that $|\epsilon(x_n(t))| \leq K_\epsilon$.
Summarising, $\Upsilon(tn^{-2})$ can be made arbitrarily small as $|tn^{-2}|$ goes to zero, namely $| \Upsilon(tn^{-2}) | \leq L_\epsilon$. Thus,
\begin{displaymath}
|e^{-\sqrt{|t|} \Upsilon(tn^{-2})}-1| \leq e^{-\sqrt{|t|} \mathsf{Re} \Upsilon(tn^{-2})} | \mathsf{Im} \sqrt{|t|} \Upsilon(tn^{-2}) | + |e^{-\sqrt{|t|} \mathsf{Re} \Upsilon(tn^{-2})} -1 |
\end{displaymath}
then, the first quantity is obviously majorized by
\begin{equation} \label{est1}
e^{-\sqrt{|t|} \mathsf{Re} \Upsilon(tn^{-2})} |\sqrt{|t|} \mathsf{Im} \Upsilon(tn^{-2}) | \leq e^{L_\epsilon \sqrt{|t|}} \sqrt{|t|} L_\epsilon 
\end{equation}
whereas for the second quantity, we use the estimate (\ref{largeestimate}) and we get
\begin{equation} \label{est2}
\begin{split}
|e^{-\sqrt{|t|} \mathsf{Re} \Upsilon(tn^{-2})}-1| \leq \sqrt{|t|} L_\epsilon \left | \sum_{k \geq 1} \frac{L_\epsilon^{k-1} |t|^{\frac{k-1}{2}}}{k!} \right |.
\end{split}
\end{equation}

Finally, it is obvious that $|e^z| \leq e^{|z|}$ for any complex number $z$, so that the following estimate holds
\begin{equation} \label{estimateD}
|D(t)| \leq |Q(t)| \left \{ e^{M \sqrt{|t|} K_\epsilon} N_{\epsilon_0} \sqrt{\frac{|t|}{n^2}} + e^{L_\epsilon \sqrt{|t|}} \sqrt{|t|} L_\epsilon + L_{\epsilon} \sqrt{|t|} \left | \sum_{k \geq 1} \frac{L_\epsilon^{k-1} |t|^{\frac{k-1}{2}}}{k!} \right | \right \}.
\end{equation}

Coming back to the proof of the proposition, we consider the first case, that is we suppose that $mn^{-2}$ converges to a real number, and we fix a $\delta > 0$ such that the decomposition in propositions \ref{dec} and \ref{decinitial} are satisfied. Then we can split
\begin{displaymath}
\begin{split}
n \int_{-\pi}^\pi e^{itm} \frac{\phi^n(t)}{1-\phi(t)} dt & = n \int_{-\delta}^\delta e^{itm} \frac{\phi^n(t)}{1-\phi(t)} dt + n \int_{|t|>\delta} e^{itm} \frac{\phi^n(t)}{1-\phi(t)} dt \\
& = I_1(m,n,\delta) + I_2(m,n,\delta). \\
\end{split}
\end{displaymath}
Let us consider first, the term $I_1(m,n,\delta)$, then setting $t=un^{-2}$ and decomposing as follows, we get
\begin{displaymath}
\begin{split}
n \int_{-\delta}^\delta e^{itm} \frac{\phi^{n}(t)}{1-\phi(t)} dt & = \int_{-n^2\delta}^{n^2\delta} e^{iumn^{-2}} \frac{\phi^n(un^{-2})}{n(1-\phi(un^{-2}))} du \\
& = \int_{-n^2\delta}^{n^2\delta} e^{iumn^{-2}} \left [ \frac{\phi^n(un^{-2})}{n(1-\phi(un^{-2}))} - \frac{\exp \{-2e^{\mathsf{sgn}(t)i \frac{\pi}{4}} \sqrt{|u|} \} }{\sqrt{|u|}} \right ] du \\
& + \int_{-\infty}^\infty e^{iumn^{-2}} \frac{\exp \{-2e^{\mathsf{sgn}(t)i \frac{\pi}{4}} \sqrt{|u|} \}}{\sqrt{|u|}} du \\
& - \int_{|u|>n^2\delta} e^{iumn^{-2}} \frac{\exp \{-2e^{\mathsf{sgn}(t) i \frac{\pi}{4}}\sqrt{|u|} \}}{\sqrt{|u|}} du. \\
& = I_3(m,n,\delta)+I_4(m,n)+I_5(m,n,\delta) \\
\end{split}
\end{displaymath}
It is easy to see that the term $I_5(m,n,\delta)$ converges to 0 as $n$ goes to infinity at the rate $\mathscr{O}(e^{-\sqrt{\frac{\pi}{2}}n})$ as the tail of the integral of an integrable function.

Applying the dominated convergence theorem to the term $I_4(m,n)$ implies that it converges to
\begin{displaymath}
  \int_{-\infty}^\infty e^{iu\lambda} \frac{\exp \{ -2 e^{ \mathsf{sgn}(t) i \frac{\pi}{4}} \sqrt{|u|} \} }{\sqrt{|u|}} du = s(\lambda)
\end{displaymath}
which is a non zero constant for all $\lambda$.

Finally, it remains to show that the term $I_3(m,n,\delta)$ goes to $0$. Using the estimate (\ref{estimateD}), we get
\begin{displaymath}
\begin{split}
\left | \int_{-n^2 \delta}^{n^2 \delta} e^{iumn^{-2}} \frac{\phi^n(un^{-2})}{n(1-\phi(un^{-2}))} du \right | & \leq \int_{-n^2 \delta}^{n^2 \delta} |Q(t)| \left \{  e^{M \sqrt{|t|} K_\epsilon} N_{\epsilon_0} \sqrt{\frac{|t|}{n^2}} \right . \\
& \left . e^{L_\epsilon \sqrt{|t|}} \sqrt{|t|} L_\epsilon + L_\epsilon \sqrt{|t|} \sum_{k \geq 1} \frac{L_\epsilon^{k-1} |t|^{\frac{k-1}{2}}}{k!} \right \} dt. \\
\end{split} 
\end{displaymath}
At this step, we have to choose $\epsilon > 0$ such that the decompositions \ref{dec} and \ref{decinitial} hold and such that $M K_\epsilon < \frac{\sqrt{2}}{4}$ and $L_\epsilon \leq \frac{\sqrt{2}}{2}$ so that the left handside integral  is majorized by
\begin{displaymath}
\begin{split}
\int_{-n^2\epsilon}^{n^2\epsilon} \frac{c}{n} N_{\epsilon_0} & e^{-\frac{\sqrt{2}}{4} \sqrt{|t|}} + c L_\epsilon e^{-\frac{\sqrt{2}}{2} \sqrt{|t|}} + cL_\epsilon e^{-\sqrt{2}{2}\sqrt{|t|}} \sum_{k \geq 1} \frac{\sqrt{2}^{k-1} |t|^{\frac{k-1}{2}}}{4^{k-1}k!} dt \\
& = I_6(n,\epsilon) + I_7(n,\epsilon) + I_8(n,\epsilon). \\
\end{split}
\end{displaymath}
Then, the quantity $I_6(n,\epsilon)$ goes to $0$ as $n$ goes to infinity, the quantity $I_7(n,\epsilon)$ can be made arbitrarily small, namely it behaves like a $\mathscr{O}(L_\epsilon)$, and setting $t=\frac{u^2}{2}$, $I_8(n,\epsilon)$ becomes
\begin{displaymath}
2L_\epsilon \int_{0}^{n \sqrt{2\epsilon}} e^{-u} \sum_{k \geq 1} \frac{\sqrt{2}^{k-1} |u|^k}{8^{k-1}k!} dt.
\end{displaymath}
Then, exchanging sum and integral, we get the majoration
\begin{displaymath}
2 L_\epsilon \sum_{k \geq 1} \frac{\sqrt{2}^{k-1}}{8^{k-1}k!} \int_0^\infty e^{-u} u^k du.
\end{displaymath}
But the latter integral is nothing but $(k+1)!$ thus the quantity $I_8(n,\epsilon)$ behaves like $\mathscr{O}(L_\epsilon)$ and as consequence it can be made arbitrarily small.

Finally, the term $I_2(m,n,\delta)$ goes to zero geometrically, and the proposition is proved.
\end{proof}

The following lemma is a refinement of a well known result on Fourier series.

\begin{lem} \label{fourier}
Let $(f_n)$ be a sequence of $2\pi$-periodic $\alpha$-Hölder real function with Hölder constante $Kn$ and $0<\alpha \leq 1$. Then for any $\epsilon > 0$, we have the estimate
\begin{displaymath}
\left | \int_{-\epsilon}^{\epsilon} f_n(t) e^{itm} dt \right | \leq \frac{Ln}{1+|m|^\alpha}
\end{displaymath}
for all $n,m \in \mathbb{Z}$.
\end{lem}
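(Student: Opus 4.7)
The plan is to apply the classical estimate for the $m$-th Fourier coefficient of an $\alpha$-Hölder function, based on the half-period shift $t \mapsto t + \pi/m$ combined with the identity $e^{i\pi}=-1$, adapted to the truncated interval $[-\epsilon,\epsilon]$. Since one integrates over an interval that is not a full period, boundary contributions of size $\pi/|m|$ will appear and have to be controlled separately.

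I would split into two regimes according to $|m|$. When $|m| \leq \pi/\epsilon$, the quantity $1+|m|^\alpha$ is bounded, and the trivial estimate $|\int_{-\epsilon}^{\epsilon} f_n(t)e^{itm}\,dt| \leq 2\epsilon\|f_n\|_\infty$ already suffices, provided $\|f_n\|_\infty = \mathscr{O}(n)$ (this is either built into the convention of ``Hölder constant'', or it follows from the $2\pi$-periodicity of $f_n$ combined with the Hölder bound applied at a fixed reference point, which yields $\|f_n\|_\infty \leq |f_n(t_0)|+Kn(2\pi)^\alpha$). When $|m| > \pi/\epsilon$, set $h=\pi/m$ (the case $m>0$; the other sign is symmetric) and use the substitution $t=s-h$ together with $e^{-ihm}=-1$ to write
\begin{displaymath}
\int_{-\epsilon}^{\epsilon} f_n(t)e^{itm}\,dt = -\int_{-\epsilon+h}^{\epsilon+h} f_n(s-h)\,e^{ism}\,ds.
\end{displaymath}

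Averaging with the original integral and splitting the integration domains at $-\epsilon+h$ and $\epsilon$ yields
\begin{displaymath}
2\int_{-\epsilon}^{\epsilon} f_n(t)\,e^{itm}\,dt = \int_{-\epsilon+h}^{\epsilon} [f_n(t)-f_n(t-h)]\,e^{itm}\,dt + B,
\end{displaymath}
where $B$ collects two remainder integrals over intervals of length $h$ at the endpoints $\pm\epsilon$, each bounded by $h\|f_n\|_\infty = \mathscr{O}(n/|m|)$. The Hölder condition bounds the principal integrand by $Kn\,h^\alpha$, so the principal term is $\mathscr{O}(Kn/|m|^\alpha)$. Since $\alpha \leq 1$ and $|m| \geq \pi/\epsilon$ in this regime, the boundary contribution $\mathscr{O}(n/|m|)$ is absorbed into $\mathscr{O}(n/|m|^\alpha)$. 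Combining both regimes yields the claimed bound $Ln/(1+|m|^\alpha)$ with $L$ depending on $K$, $\epsilon$ and $\alpha$. The only subtlety, such as it is, lies in the careful bookkeeping of the boundary terms produced by truncating the integration to $[-\epsilon,\epsilon]$; no ingredient beyond the classical shift trick is required.
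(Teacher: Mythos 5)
Your proof is correct, but it takes a genuinely different route from the paper. The paper's argument rests on the identity
\begin{displaymath}
\int_{-\pi}^{\pi} f_n(t)\,e^{itm}\,dt = \int_0^{2\pi} \frac{1}{m}\sum_{k=0}^{m-1}\left[ f_n\!\left(\tfrac{t}{m}+\tfrac{2k\pi}{m}\right) - f_n\!\left(\tfrac{2k\pi}{m}\right)\right] e^{it}\,dt,
\end{displaymath}
obtained by partitioning the period into $m$ equal arcs and subtracting a free constant using $\int_0^{2\pi}e^{it}dt=0$; it then encodes the truncation to $(-\epsilon,\epsilon)$ by multiplying $f_n$ by an indicator $\mathbf{1}_{\Lambda_\epsilon}$ and splits the integrand into a Hölder term at scale $2\pi/m$ (yielding $\mathscr{O}(n/m^\alpha)$) and an indicator-discrepancy term, which is nonzero for at most two values of $k$ (yielding $\mathscr{O}(1/m)$). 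You instead use the classical half-period shift $t\mapsto t+\pi/m$ with $e^{i\pi}=-1$, average the two representations, and reduce to the Hölder difference $f_n(t)-f_n(t-\pi/m)$ on the overlap of the two integration domains, with two boundary intervals of length $\pi/|m|$ absorbed because $\alpha\le 1$. Both routes give the same bound; your version is the more standard textbook device and is arguably more transparent in how the truncation is handled (two explicit boundary integrals rather than indicator differences), at the cost of splitting into two regimes of $|m|$. One caveat your write-up correctly flags and the paper leaves tacit: both proofs need a bound $\|f_n\|_\infty=\mathscr{O}(n)$ on the functions themselves, not just on their Hölder seminorm. The paper's proof silently invokes such a bound through an unexplained constant $M$ majorizing $|f_n|$, and you acknowledge the same dependence explicitly; as literally stated the lemma is false (take $f_n\equiv n^2$, a constant with vanishing Hölder seminorm, and $m=0$), so an extra hypothesis on $\sup|f_n|$ is implicit in the statement. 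In the intended application the relevant functions are uniformly bounded, so this does not affect the paper, but it is a genuine gap in the lemma as written, shared equally by your proof and the paper's.
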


\begin{proof}
It is well known that 
\begin{displaymath}
\int_{-\pi}^\pi f_n(t) e^{itm} dt = \int_0^{2\pi} \frac{1}{m} \sum_{k=0}^{m-1} \left [ f_n \left ( \frac{t}{m}+\frac{2k\pi}{m} \right ) - f_n \left ( \frac{2k\pi}{m} \right ) \right ] e^{it} dt.
\end{displaymath}
Thus, we get that $\int_{-\epsilon}^\epsilon f_n(y) e^{itm} dt$ is given by
\begin{displaymath}
\int_0^{2\pi} \frac{1}{m} \sum_{k=0}^{m-1} \left [ f_n \left (\frac{t}{m}+\frac{2k\pi}{m} \right ) 1_{\Lambda_\epsilon} \left ( \frac{t}{m}+\frac{2k\pi}{m} \right )-f_n \left ( \frac{2k\pi}{m} \right )1_{\Lambda_\epsilon}\left ( \frac{2k\pi}{m} \right ) \right ]  e^{it} dt \\
\end{displaymath}
where $\Lambda_\epsilon=[\pi-\epsilon;\pi+\epsilon]$.

Then, the regularity of $f_n$ gives us that for any $x,y$
\begin{displaymath}
\begin{split}
|f_n(x)1_A(x)-f_n(y)1_A(y)| & \leq |f_n(x)-f_n(y)|+|f_n(y)||1_A(x)-1_A(y)| \\
& \leq Kn|x-y|^\alpha + M|1_A(x)-1_A(y)|. \\
\end{split}
\end{displaymath}
Consequently,
\begin{displaymath}
\begin{split}
& \left | \int_0^{2\pi} \frac{1}{m} \sum_{k=0}^{m-1} \left [ f_n \left (\frac{t}{m}+\frac{2k\pi}{m} \right )1_{\Lambda_\epsilon} \left (\frac{t}{m}+\frac{2k\pi}{m} \right )-f_n \left (\frac{2k\pi}{m} \right ) 1_{\Lambda_\epsilon} \left (\frac{2k\pi}{m} \right ) \right ]  e^{it} dt \right | \\
& \leq \int_0^{2\pi} \frac{1}{m} \sum_{k=0}^{m-1} Kn \left | \frac{t}{m} \right |^\alpha dt \\
& + \int_0^{2\pi} \frac{1}{m} \sum_{k=0}^{n-1} M \left | 1_{\Lambda_\epsilon}\left (\frac{t}{m}+\frac{2k\pi}{m} \right )-1_{\Lambda_\epsilon} \left (\frac{2k\pi}{m} \right ) \right | dt \\
& = J_1(m,\epsilon)+J_2(m,\epsilon). \\
\end{split}
\end{displaymath}
It is obvious that the quantity $J_1(m,\epsilon)$ is majorized by
\begin{displaymath}
\int_0^{2\pi} \frac{1}{m} \sum_{k=0}^{m-1} Kn \left | \frac{t}{n} \right |^\alpha dt \leq K' \frac{n}{|m|^\alpha}
\end{displaymath}
For the quantity $J_2(m,\epsilon)$, we only have to observe that the difference of indicator function is non zero for only two integers $k$, and, in that case, the difference is obviously bounded so that
\begin{displaymath}
\int_0^{2\pi} \frac{1}{m} \sum_{k=0}^{n-1} M \left | 1_{\Lambda_\epsilon}\left (\frac{t}{m}+\frac{2k\pi}{m} \right )-1_{\Lambda_\epsilon} \left (\frac{2k\pi}{m} \right ) \right | dt \leq \frac{2M}{m}.
\end{displaymath}
Therefore the lemma is proved.
\end{proof}

\begin{prp} \label{lemy1}
The sequence
\begin{displaymath}
\left (\sqrt{|y_1|} \int_{-\pi}^\pi e^{ity_1-itz} \frac{\phi^{y_2}(t)}{1-\phi(t)} dt \right )_{(y_1,y_2) \in \mathbb{Z}^2}
\end{displaymath}
converges to a non zero constant as $\frac{y_1}{y_2^2}$ goes to infinity.
\end{prp}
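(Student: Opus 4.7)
The plan is to mirror the structure of the proof of proposition \ref{lemy2}, but with the rescaling adapted to the regime in which $y_1$ dominates $y_2^2$. In the previous proposition the natural scaling was $t = u y_2^{-2}$, driven by the decay rate of $\phi^{y_2}$. Here, since $y_2/\sqrt{|y_1|}\to 0$, the factor $g(r(t))^{|y_2|}$ stays close to $1$ on the effective frequency scale, and the relevant scale is instead $t = u|y_1|^{-1}$ dictated by the oscillating factor $e^{ity_1}$. Morally, on this scale the integrand reduces to the one controlling $\gamma$ in proposition \ref{equivalent}, so one should recover exactly the same type of $1/\sqrt{|y_1|}$ equivalent.

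First, I would fix $\delta>0$ small enough for both decompositions in propositions \ref{decinitial} and \ref{dec} to be valid, and split the integral into $\int_{|t|<\delta}$ and $\int_{|t|>\delta}$. On the far part, $|g(r(t))|$ stays strictly less than $1$ and $|1-\phi(t)|$ is bounded below, so $\phi^{y_2}(t)/(1-\phi(t))$ has all derivatives bounded uniformly in $y_2$, and the standard integration-by-parts argument (as in the $\mathscr{O}(1/x)$ term in proposition \ref{equivalent}) shows that $\int_{|t|>\delta} e^{ity_1-itz}\phi^{y_2}(t)/(1-\phi(t))\,dt = \mathscr{O}(|y_1|^{-1})$, which is negligible after multiplying by $\sqrt{|y_1|}$.

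For the near part, I would perform the substitution $t = u/|y_1|$, obtaining
\begin{displaymath}
\sqrt{|y_1|}\int_{|t|<\delta} e^{ity_1-itz}\frac{\phi^{y_2}(t)}{1-\phi(t)}\,dt = \frac{1}{\sqrt{|y_1|}}\int_{|u|<\delta |y_1|} e^{iu\,\mathsf{sgn}(y_1)-iuz/|y_1|}\,\frac{g(r(u/|y_1|))^{|y_2|}}{1-\phi(u/|y_1|)}\,du.
\end{displaymath}
Using proposition \ref{decinitial}, the factor $|y_1|^{-1/2}(1-\phi(u/|y_1|))^{-1}$ equals $c/\sqrt{|u|}$ up to terms of order $|y_1|^{-1}\sqrt{|u|}\,a(u/|y_1|)$ and $|y_1|^{-1/2} b(u/|y_1|)$, both of which vanish as $|y_1|\to\infty$ for fixed $u$. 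Using proposition \ref{dec} and writing $g(r(u/|y_1|))^{|y_2|} = \exp\{|y_2|\log(1 - 2\sqrt{|u|/|y_1|}e^{\mathsf{sgn}(u)i\pi/4} - \sqrt{|u|/|y_1|}\alpha(u/|y_1|) - \beta(u/|y_1|))\}$, the leading exponent is $-2(|y_2|/\sqrt{|y_1|})e^{\mathsf{sgn}(u)i\pi/4}\sqrt{|u|}$; since $|y_2|/\sqrt{|y_1|}\to 0$ by hypothesis, this tends to $0$ pointwise and the whole factor converges to $1$.

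The expected limit is therefore $\int_{-\infty}^\infty e^{iu\,\mathsf{sgn}(y_1)} c/\sqrt{|u|}\,du$, a non-zero constant, and the proposition will follow once dominated convergence is justified. The main obstacle, as in the proof of proposition \ref{lemy2}, is providing an integrable majorant that is uniform in $|y_1|$ and $|y_2|$ over the rescaled interval $|u|<\delta|y_1|$. For this I would reproduce the estimate of type $|D(t)|$ used there: factorise out $|Q(u)| = ce^{-\sqrt{2}\sqrt{|u|}\cdot |y_2|/\sqrt{|y_1|}}/\sqrt{|u|}$ (which is $\leq c/\sqrt{|u|}$ uniformly), and bound the correction $|e^{-\sqrt{|u|}\,\Upsilon}-1|$ by $\sqrt{|u|}L_\epsilon\sum_{k\geq 1}(L_\epsilon \sqrt{|u|})^{k-1}/k!$, choosing $\epsilon$ so that $L_\epsilon < \sqrt{2}/2$ to preserve integrability against $|Q(u)|$. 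The tail $|u|>\delta|y_1|$ of the limiting integral $\int e^{iu\,\mathsf{sgn}(y_1)}/\sqrt{|u|}\,du$ contributes only $\mathscr{O}(|y_1|^{-1/2})$ by the same convergent-integral argument used for $I_5$ in the proof of proposition \ref{lemy2}, which completes the estimate.
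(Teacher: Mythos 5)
Your strategy is genuinely different from the paper's, but it has a gap at the dominated-convergence step that I don't think can be closed as written.

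You rescale by $t=u/|y_1|$ and aim for a DCT argument as in the proof of Proposition~\ref{lemy2}. But in that proof, the essential point was that the comparison function
\begin{displaymath}
Q(u) = \frac{c\exp\{-2 e^{\mathsf{sgn}(u)i\pi/4}\sqrt{|u|}\}}{\sqrt{|u|}}
\end{displaymath}
is integrable on $\mathbb{R}$ because $\mathsf{Re}\,e^{i\pi/4}>0$ forces exponential decay of the modulus. After your rescaling, the analogous factor becomes $\exp\{-\sqrt{2}\sqrt{|u|}\,|y_2|/\sqrt{|y_1|}\}$, whose decay rate $|y_2|/\sqrt{|y_1|}$ tends to $0$ in the regime $y_1/y_2^2\to\infty$. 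You acknowledge this by dominating $|Q(u)|\le c/\sqrt{|u|}$, but $c/\sqrt{|u|}$ is not integrable at infinity, so there is no $L^1$ majorant and dominated convergence cannot be invoked. The proposed limit $\int e^{iu\,\mathsf{sgn}(y_1)}/\sqrt{|u|}\,du$ is only a conditionally convergent (Fresnel-type) integral, so the pointwise convergence of the integrand gives you nothing by itself. The correction estimate fails in the same way: you bound $|e^{-\sqrt{|u|}\Upsilon}-1|$ by something of size $\sqrt{|u|}e^{L_\epsilon\sqrt{|u|}}$ and want to absorb the exponential growth into the decay of $|Q(u)|$, but here $|Q(u)|$ has no exponential decay to offer, so the product is not integrable for any choice of $\epsilon$.

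The paper sidesteps this obstruction entirely. Rather than rescaling and passing to the limit pointwise, it keeps $t$ unrescaled near $0$ and decomposes the integrand as
\begin{displaymath}
\frac{g(r(t))^n}{1-\phi(t)} = R_1(n,t)+R_2(n,t)+R_3(n,t), \qquad R_3(n,t)=\frac{1}{1-\phi(t)},
\end{displaymath}
with $R_1 = [g(r(t))^n - q(t)^n]/(1-\phi(t))$ and $R_2 = [q(t)^n-1]/(1-\phi(t))$, where $q(t)=1-2e^{\mathsf{sgn}(t)i\pi/4}\sqrt{|t|}-it$. The $R_3$ term already yields the leading $c'/\sqrt{|m|}$ behavior via Proposition~\ref{equivalent}. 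The crucial replacement for DCT is Lemma~\ref{fourier}: $R_1$ is shown to be Lipschitz with constant $\mathscr{O}(n)$, whence its Fourier coefficient at frequency $m$ is $\mathscr{O}(n/|m|) = o(1/\sqrt{|m|})$ as soon as $\sqrt{|m|}/n\to\infty$. The $R_2$ term is handled by an explicit integration by parts exploiting the oscillation of $e^{itm}$. In other words, the paper converts a smoothness estimate into quantitative Fourier decay, which is exactly the piece of oscillatory cancellation that your DCT argument leaves unjustified. If you want to salvage your route, you would need to replace DCT by a uniform bound on the oscillatory tail $\int_{R<|u|<\delta|y_1|}$, and the natural tool for that is precisely the Hölder/Lipschitz-to-Fourier-decay machinery of Lemma~\ref{fourier} that the paper deploys.
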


\begin{proof}
As in the previous proposition, set $n=y_2$ and $m=y_1-z$ for short. Thus, we want to estimate the integral
\begin{displaymath}
\int_{-\pi}^\pi e^{itm} \frac{g(r(t))^n}{1-\phi(t)} dt.
\end{displaymath}
Choose $\delta > 0$ so that the decompositions in propositions \ref{dec} and \ref{decinitial} are satisfied and split the integral,
\begin{displaymath}
\begin{split}
\int_{-\pi}^\pi e^{itm} \frac{g(r(t))^n}{1-\phi(t)} dt & = \int_{-\delta}^\delta e^{itm} \frac{g(r(t))^n}{1-\phi(t)} dt + \int_{|t|>\delta} e^{itm} \frac{g(r(t))^n}{1-\phi(t)} dt \\
& = I_1(m,n,\delta) + I_2(m,n,\delta). \\
\end{split}
\end{displaymath}
The function $t \mapsto g(r(t))^n [1-\phi(t)]^{-1}$ being continuously differentiable on the set $\{ |t| > \delta \}$, integrating by parts, we see that $I_2(m,n,\delta)$ goes to $0$ like $\mathscr{O}(\frac{n}{m})$ \emph{i.e.} like  $o\left (\frac{1}{\sqrt{|m|}}\right )$. 

Let us deal with the quantity $I_1(m,n,\delta)$, then we can write,
\begin{displaymath}
\begin{split}
\frac{g(r(t))^n}{1-\phi(t)} = & \frac{g(r(t))^n-(1-2e^{\textsf{sgn}(t) i \frac{\pi}{4}}\sqrt{|t|}-it)^n}{1-\phi(t)} \\
& + \frac{(1-2e^{\textsf{sgn}(t) i \frac{\pi}{4}}\sqrt{|t|}-it)^n-1}{1-\phi(t)} + \frac{1}{1-\phi(t)} \\
&  = R_1(n,t)+R_2(n,t)+R_3(n,t).
\end{split}
\end{displaymath}
We already know that the integral of function $R_3(n,t)$
\begin{displaymath}
\int_{-\delta}^\delta e^{itm} [1-\phi(t)]^{-1} dt
\end{displaymath}
is equivalent to the sequence $(c'|m|^{-1/2})_m$ as $|m|$ goes to infinity.
\fixme{$it/2$, la constante a-t-elle change ?}
Consider the function $R_1(n,t)$, then we can show it is Lipshitz with Lipshitz constant depending linearly on $n$. Let us denote by $q$ the function,
\begin{displaymath}
q(t)=1-2e^{\textsf{sgn}(t) i \frac{\pi}{4}}\sqrt{|t|}-it.
\end{displaymath}

Then, we split
\begin{displaymath}
\begin{split}
\frac{g(r(t))^n-q(t)^n}{1-\phi(t)} & = c \frac{g(r(t))^n-q(t)^n}{|t|^{1/2}} \\
& + \left [ g(r(t))^n-q(t)^n \right ] |t|^{1/2} a(t) \\
& + \left [ g(r(t))^n-q(t)^n  \right ] b(t). \\
\end{split}
\end{displaymath}
Actually, if the first quantity is continuously differentiable, the two other quantity are also continuously differentiable because they are obviously smoother. Let us compute the derivative of the first function.
\begin{displaymath}
\begin{split}
\frac{d}{dt} \frac{g(r(t))^n - q(t)^n}{|t|^{1/2}} & = \frac{d}{dt} \left ( \alpha(t) + \frac{\beta(t)}{\sqrt{|t|}} \right ) \sum_{k=0}^{n-1} g(t)^kq(t)^{n-k} \\
& = \left ( \alpha'(t) + \frac{\beta'(t)}{\sqrt{|t|}} - \frac{\beta(t)}{2|t|^{3/2}} \right ) \sum_{k=0}^{n-1} g(r(t))^k q(t)^{n-k} \\
& + \left ( \alpha(t) + \frac{\beta(t)}{\sqrt{|t|}} \right ) \sum_{k=0}^{n-1} \left [ k g'(t) g(r(t))^{k-1} q(t)^{n-k} \right . \\
& \left . + g(r(t))^k (n-k) q'(t) q(t)^{n-k-1} \right ]. \\
\end{split}
\end{displaymath}
The sums are estimated as follows,
\begin{displaymath}
\left | \sum_{k=0}^{n-1} g(r(t))^k q(t)^{n-k} \right | \leq n
\end{displaymath}
and
\begin{displaymath}
\left | \sum_{k=0}^{n-1}  k g'(r(t)) g(r(t))^{k-1} q(t)^{n-k} + g(r(t))^k (n-k) q'(t) q(t)^{n-k-1} \right | \leq \frac{Mn}{\sqrt{|t|}}
\end{displaymath}
where $M$ is a upper bound of $|\sqrt{|t|} g'(r(t))|$ and $|\sqrt{|t|} q'(t)|$ in a neighborhood of $0$.

Since, $\alpha(0)=\beta(0)=\beta'(0)=0$, the function $t \mapsto (\alpha'(t) + \beta'(t)|t|^{-1/2} - \beta(t) 2^{-1} |t|^{-3/2})$ is continous, therefore bounded. Moreover, the function $t \mapsto (\alpha(t) + \beta(t) |t|^{-1/2})$ is a $\mathscr{O}(\sqrt{|t|})$. Finally, we have the following estimate of the derivative,
\begin{displaymath}
\left | \frac{d}{dt} \frac{g(r(t))^n-q(t)^n}{|t|^{1/2}} \right | \leq Mn
\end{displaymath}
and this implies that the function $R_1(n,t)$ is Lipshitz with Lipshitz constant $Mn$.

By lemma \ref{fourier}, there exists a constant $K$ such that
\begin{displaymath}
\sqrt{|m|} \left | \int_{-\delta}^\delta \frac{g(r(t))^n - q(t)^n}{1-\phi(t)} e^{itm} dt \right | \leq \frac{Kn}{\sqrt{|m|}}
\end{displaymath}
so that the integral goes to $0$ when $\frac{\sqrt{|m|}}{n}$ goes to infinity.

It remains to estimate the integral of function $R_2(n,t)$, namely
\begin{displaymath}
\int_{-\delta}^\delta \frac{q(t)^n-1}{1-\phi(t)} e^{itm} dt
\end{displaymath}
which can be split as
\begin{displaymath}
\begin{split}
\int_{-\delta}^\delta c \frac{q(t)^n-1}{|t|^{1/2}} e^{itm} dt & + \int_{-\delta}^\delta (q(t)^n-1) |t|^{1/2} a(t)e^{itm} dt + \int_{-\delta}^\delta (q(t)^n-1) b(t) e^{itm}dt \\
& = I_3(m,n,\delta)+I_4(m,n,\delta)+I_5(m,n,\delta) \\
\end{split}
\end{displaymath}

Considering the integral $I_3(m,n,\delta)$, factorizing the quantity $q(t)^n-1$, and integrating by parts, we get
\begin{displaymath}
\begin{split}
\sqrt{|m|} \int_{-\delta}^\delta & c \frac{q(t)^n-1}{|t|^{1/2}} e^{itm} dt \\
& = -n\sqrt{|m|} \int_{-\delta}^\delta (2e^{\textsf{sgn}(t) i \frac{\pi}{4}} + i\sqrt{|t|}) \frac{1}{n} \sum_{k=0}^{n-1} (-1)^k (2e^{\textsf{sgn}(t) i\frac{\pi}{4}}+i \sqrt{|t|})^k e^{itm} dt \\
& = \frac{-n}{i\sqrt{|m|}} \left [ (2e^{\textsf{sgn}(t) i \frac{\pi}{4}} + i \sqrt{|t|}) \frac{1}{n} \sum_{k=0}^{n-1} (-1)^k (2e^{\textsf{sgn}(t) i \frac{\pi}{4}} + i \sqrt{|t|})^k e^{itm} \right ]_{-\epsilon}^{\epsilon} \\
& + \frac{n}{i\sqrt{|m|}} \int_{-\epsilon}^\epsilon \frac{d}{dt} \left [ (2e^{\textsf{sgn}(t) i \frac{\pi}{4}} + i \sqrt{|t|}) \frac{1}{n} \sum_{k=0}^{n-1} (-1)^k (2e^{\textsf{sgn}(t) i \frac{\pi}{4}} + i \sqrt{|t|})^k \right ] e^{itm} dt. \\
\end{split}
\end{displaymath}
The first quantity in the bracket is obviously bounded, so that the first term goes to $0$ as $\frac{\sqrt{|m|}}{n}$ goes to infinity. Consequently, it only remains to show that the derivative involved in the integral is integrable. Let us compute it,
\begin{displaymath}
\begin{split}
\frac{d}{dt} & \left [ (2e^{\textsf{sgn}(t) i \frac{\pi}{4}} + i \sqrt{|t|}) \frac{1}{n} \sum_{k=0}^{n-1} (-1)^k (2e^{\textsf{sgn}(t) i \frac{\pi}{4}} + i \sqrt{|t|})^k \right ] \\
& = \frac{i}{4\sqrt{|t|}} \frac{1}{n} \sum_{k=0}^{n-1} (-1)^k (2e^{\textsf{sgn}(t) i \frac{\pi}{4}} + i \sqrt{|t|})^k \\
& + (2e^{\textsf{sgn}(t) i \frac{\pi}{4}}+i \sqrt{|t|}) \frac{1}{n} \sum_{k=0}^{n-1} k (-1)^k ( \frac{e^{\textsf{sgn}(t) i \frac{\pi}{4}}}{\sqrt{|t|}} + i )(2e^{\textsf{sgn}(t) i \frac{\pi}{4}} + i \sqrt{|t|})^{k-1}. \\
\end{split}
\end{displaymath}
The Cesàro sum
\begin{displaymath}
\frac{1}{n} \sum_{k=0}^{n-1} k (-1)^k (2e^{\textsf{sgn}(t) i \frac{\pi}{4}} + i \sqrt{|t|})^{k-1}
\end{displaymath}
converges to $0$ as $n$ goes to infinity (hence is bounded). Thus, we get the estimate
\begin{displaymath}
\begin{split}
& \left | \frac{d}{dt} \left [ (2e^{\textsf{sgn}(t) i \frac{\pi}{4}} + i \sqrt{|t|}) \frac{1}{n} \sum_{k=0}^{n-1} (-1)^k (2e^{\textsf{sgn}(t) i \frac{\pi}{4}} + i \sqrt{|t|})^k \right ] \right | \\
& \leq \frac{1}{4\sqrt{|t|}} + K \left | 2e^{\textsf{sgn}(t) i \frac{\pi}{4}} + i \sqrt{|t|} \right | \left | \frac{e^{\textsf{sgn}(t) i \frac{\pi}{4}}}{\sqrt{|t|}} + i \right | \\
\end{split}
\end{displaymath}
and the latter is integrable.

Quantities $I_4(m,n,\delta)$ and $I_5(m,n,\delta)$ can be estimated in the same way and the proposition is proved.

\end{proof}

From proposition \ref{expnoyaufaible}, \ref{lemy2} and \ref{lemy1}, we get the following corollaries.

\begin{cor} \label{noyaufaible}
Let $z \in \mathbb{H}_0$, then we have
\begin{displaymath}
\lim_{|y| \to \infty} K(z,y) = 1.
\end{displaymath}
\end{cor}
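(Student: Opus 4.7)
The plan is to reduce the statement to the two asymptotic estimates already established for the integral in Proposition \ref{expnoyaufaible}. Writing
\begin{displaymath}
K(z,y) = \frac{N(y,z)}{N(y,0)}, \qquad N(y,w) := \int_{-\pi}^\pi e^{it(y_1-w)} \frac{g(r(t))^{|y_2|}}{1-\phi(t)} \, dt,
\end{displaymath}
the task is to show that $N(y,z)$ and $N(y,0)$ admit the same leading-order asymptotic as $|y| \to \infty$, so that their quotient tends to $1$. The crucial observation is that, since $z \in \mathbb{H}_0$ is fixed, replacing $y_1$ by $y_1-z$ does not change any of the limiting quantities produced by Propositions \ref{lemy2} and \ref{lemy1}.

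Given any sequence $(y^{(k)})$ with $|y^{(k)}| \to \infty$, I would extract a subsequence falling in one of the two regimes covered by the preceding propositions: (i) $y_1^{(k)}/{y_2^{(k)}}^2 \to \lambda \in \mathbb{R}$, or (ii) $y_1^{(k)}/{y_2^{(k)}}^2 \to \pm\infty$. Every subsequence admits a further subsequence of one of these types, so it is enough to verify the limit along each.

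In regime (i), we must have $|y_2^{(k)}| \to \infty$ (otherwise $|y^{(k)}| \not\to \infty$), and since $z$ is fixed, $(y_1^{(k)} - z)/{y_2^{(k)}}^2 \to \lambda$ as well. Proposition \ref{lemy2} applied both with shift $z$ and with shift $0$ then yields
\begin{displaymath}
|y_2^{(k)}|\, N(y^{(k)},z) \longrightarrow s(\lambda), \qquad |y_2^{(k)}|\, N(y^{(k)},0) \longrightarrow s(\lambda),
\end{displaymath}
with the same non-zero constant $s(\lambda)$; hence the ratio tends to $1$. In regime (ii), Proposition \ref{lemy1} yields $\sqrt{|y_1^{(k)}|}\, N(y^{(k)},w) \to c'$ for $w \in \{0,z\}$ with the same non-zero $c'$, once one notes that $\sqrt{|y_1-z|}/\sqrt{|y_1|} \to 1$ absorbs the shift. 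Again the quotient tends to $1$.

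The only point requiring care — the main obstacle, if any — is to make explicit that the limit constants in Propositions \ref{lemy2} and \ref{lemy1} are independent of the shift $z$. This is immediate from their proofs: in \ref{lemy2} the limit is $s(\lambda)$ and the value of $\lambda$ is unaffected by the bounded correction $-z$ after division by ${y_2}^2 \to \infty$; in \ref{lemy1} the leading contribution comes from the $R_3$-term $\int_{-\delta}^\delta e^{it(y_1-z)}(1-\phi(t))^{-1} dt \sim c'/\sqrt{|y_1-z|}$ from Proposition \ref{equivalent}, and $\sqrt{|y_1|/|y_1-z|} \to 1$. Combining the two regimes by the sub-subsequence argument above gives $\lim_{|y|\to\infty} K(z,y) = 1$.
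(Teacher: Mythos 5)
Your proposal is correct and follows the same route the paper intends: the paper simply states that Corollary \ref{noyaufaible} follows directly from Propositions \ref{expnoyaufaible}, \ref{lemy2}, and \ref{lemy1}, without writing out the subsequence argument or the observation that the fixed shift $z$ does not affect the limiting constants. You have filled in exactly those implicit details.
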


Corollary \ref{noyaufaible} implies the following.
\begin{cor} \label{noyaufort}
Let $x \in \mathbb{H}$, then
\begin{displaymath}
\lim_{|y| \to \infty} \sum_{z \in \mathbb{H}_0} \nu_x(z) K(z,y) = 1
\end{displaymath}
\end{cor}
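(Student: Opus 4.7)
The plan is to push the sum over $z$ inside the Fourier representation of Proposition \ref{expnoyaufaible}, identify $\hat\nu_x$ explicitly, and then reuse the asymptotic analysis of Propositions \ref{lemy2} and \ref{lemy1}.

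Set $\hat\nu_x(t):=\sum_{z\in\mathbb{H}_0}\nu_x(z)e^{itz}$. The modulus of the integrand appearing in Proposition \ref{expnoyaufaible} equals $|g(r(t))|^{|y_2|}/|1-\phi(t)|$, independent of $z$ and integrable on $[-\pi,\pi]$ (near $0$ it behaves like $e^{-2|y_2|\sqrt{|t|}}/\sqrt{|t|}$ by Propositions \ref{decinitial} and \ref{dec}). Fubini's theorem therefore yields
\[
\sum_{z\in\mathbb{H}_0}\nu_x(z)K(z,y) = \frac{\displaystyle\int_{-\pi}^\pi \hat\nu_x(-t)\, e^{ity_1}\,\frac{g(r(t))^{|y_2|}}{1-\phi(t)}\, dt}{\displaystyle\int_{-\pi}^\pi e^{ity_1}\,\frac{g(r(t))^{|y_2|}}{1-\phi(t)}\, dt}.
\]
Rerunning the Fourier computation that opens the proof of Proposition \ref{expnoyaufaible}, but starting the walk at $x\in\mathbb{H}$ rather than on the axis, identifies $\hat\nu_x(t) = e^{itx_1}g(r(t))^{|x_2|}$, so that $\hat\nu_x(-t) = e^{-itx_1}\overline{g(r(t))}^{|x_2|}$. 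The extra factor $\overline{g(r(t))}^{|x_2|}$ is continuous on $[-\pi,\pi]$, bounded by $1$ in modulus, and equals $1$ at $t=0$ by Proposition \ref{dec}.

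In the regime $y_1/y_2^2\to\lambda\in\mathbb{R}$, the substitution $t=u/y_2^2$ used in Proposition \ref{lemy2} makes the extra factor $\overline{g(r(u/y_2^2))}^{|x_2|}\to 1$ pointwise (since $|x_2|$ is fixed), while $|\overline{g(r)}|^{|x_2|}\leq 1$ provides a uniform dominating bound; by dominated convergence $|y_2|$ times the numerator tends to the same non-zero limit $s(\lambda)$ as $|y_2|$ times the denominator, so the ratio tends to $1$. In the regime $y_1/y_2^2\to\pm\infty$, decompose $\overline{g(r(t))}^{|x_2|}=1+h_x(t)$ where Proposition \ref{dec} guarantees $h_x$ is continuous with $h_x(0)=0$ and $h_x(t)=O(\sqrt{|t|})$ near $0$. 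The ``$1$'' part yields exactly the integral of Proposition \ref{lemy1} with $y_1$ replaced by $y_1-x_1$, so it gives the correct leading order $c'/\sqrt{|y_1|}$; the $h_x$-correction, combined with $1/(1-\phi(t))$, is bounded on $(-\delta,\delta)$ and piecewise smooth (only an $\mathsf{sgn}(t)$-jump at $t=0$), so splitting the integral at $0$ and integrating by parts on each half against $e^{it(y_1-x_1)}$ yields $O(|y_1|^{-1})=o(|y_1|^{-1/2})$, hence negligible with respect to the denominator.

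The main obstacle is the treatment of the $h_x$-correction in the second regime: although $h_x(t)/\sqrt{|t|}$ is not continuous at $0$, the jump is bounded, so the standard integration-by-parts trick applied separately on $[-\delta,0]$ and $[0,\delta]$ suffices. One also needs to verify that the $h_x$ perturbations of the $R_1, R_2$ contributions in the decomposition of Proposition \ref{lemy1} remain $o(|y_1|^{-1/2})$, which follows from the uniform bound $|h_x|\leq 2$ together with the estimates already established there. Combining both regimes yields $\sum_{z\in\mathbb{H}_0}\nu_x(z)K(z,y)\to 1$, which is the corollary.
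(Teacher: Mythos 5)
Your strategy matches the paper's opening move exactly: push the sum over $z$ through Proposition \ref{expnoyaufaible}'s Fourier formula, and identify $\sum_z \nu_x(z)e^{-itz}$ with $e^{-itx_1}$ times a power of $g\circ r$. Where the two proofs diverge is in what to do with this factor. The paper observes --- using the symmetry $\nu_x(z)=\nu_{-x}(-z)$ --- that the extra factor simply increments the exponent, so the numerator becomes
\[
\int_{-\pi}^\pi e^{it(y_1-x_1)}\,\phi^{|y_2|+|x_2|}(t)\,(1-\phi(t))^{-1}\,dt,
\]
which is \emph{literally} the integral already analyzed in Propositions \ref{lemy2} and \ref{lemy1}, with $y_1\mapsto y_1-x_1$ and $|y_2|\mapsto|y_2|+|x_2|$. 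Both numerator and denominator therefore have the same leading asymptotics in each regime and the ratio tends to $1$ with no further work. You instead keep $\hat\nu_x(-t)=e^{-itx_1}\overline{g(r(t))}^{|x_2|}$ as a multiplicative perturbation, split it as $1+h_x(t)$, and argue the $h_x$-part is negligible; this is correct in spirit but buys nothing over the paper's observation and costs you a nontrivial re-derivation. It also introduces small inaccuracies: the integration-by-parts bound for the $h_x$-piece is not $O(|y_1|^{-1})$ uniformly in $y_2$, since differentiating $g(r(t))^{|y_2|}$ brings down a factor of order $|y_2|/\sqrt{|t|}$ --- the bound you actually get is $O(|y_2|/|y_1|)$, which happens to be $o(|y_1|^{-1/2})$ in the regime $y_1/y_2^2\to\pm\infty$ precisely because $|y_2|=o(\sqrt{|y_1|})$ there, so the conclusion survives but the stated rate is wrong. (Incidentally, your $\overline{g(r(t))}^{|x_2|}$ is the more carefully signed version of the paper's $\phi^{x_2}(t)=g(r(t))^{|x_2|}$; the discrepancy is only a complex conjugate and does not affect the asymptotics, since $|g(r(t))|$ controls the decay in both formulations.) In short: the approach is the same, the execution misses the paper's shortcut of absorbing the perturbation into the exponent, and the residual estimate is stated imprecisely though the conclusion is salvageable.
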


\begin{proof}
From Corollary \ref{noyaufaible} we have that, for any $z \in \mathbb{H}_0$,
\begin{displaymath}
\lim_{|y| \to \infty} K(z,y)=1.
\end{displaymath}

The sum $\sum_{(z,0) \in \mathbb{H}_0} \nu_x(z) K(z,y)$ is given by
\begin{displaymath}
\sum_{(z,0) \in \mathbb{H}_0} \nu_x(z) K(z,y) = \frac{\int_{-\pi}^\pi e^{ity_1} \phi^{y_2}(t) (1-\phi(t))^{-1} \sum \nu_x(z)e^{-itz} dt}{\int_{-\pi}^\pi e^{ity_1} \phi^{y_2} (1-\phi(t))^{-1} dt}.
\end{displaymath}
Noting that the probability $\nu_x(z)=\nu_{-x}(-z)$, the following equality holds 
\begin{displaymath}
\sum_{(z,0) \in \mathbb{H}_0} \nu_x(z)e^{-itz} = \sum_{(z,0) \in \mathbb{H}_0} \nu_{-x}(z)e^{itz}.
\end{displaymath} 
The latter is the characteristic function of $\nu_{-x}$ which is given (see the proof of theorem \ref{expnoyaufaible}) by
\begin{displaymath}
e^{-itx_1}\phi^{x_2}(t).
\end{displaymath}
Replacing in the integral, we obtain
\begin{displaymath}
\sum_{(z,0) \in \mathbb{H}_0} \nu_x(z)K(z,y) = \frac{\int_{-\pi}^\pi e^{it(y_1-x_1)} \phi^{|y_2|+|x_2|}(t) (1-\phi(t))^{-1} dt}{\int_{-\pi}^\pi e^{ity_1} \phi^{y_2} (1-\phi(t))^{-1} dt}
\end{displaymath}
and using the estimates of proposition \ref{lemy2} and \ref{lemy1}, one has the announced convergence.
\end{proof}

\subsection{Behavior before first return time}
\label{before}

Recall the equation (\ref{decomposition}) holding for $x,y \in \mathbb{H}$,
\begin{displaymath}
K(x,y)=\frac{\mathbf{E}^x(\eta_{0,\tau_1}(y))}{G(0,y)} + \sum_{z \in \mathbb{H}_0} \nu_x(z) K(z,y).
\end{displaymath}
It remains to show that the first term in this equation tends to zero. 

Assume that $x_2,y_2 \geq 0$ and $y_1 \geq x_1$ and let us fix our notation. We will define by $s^{\flat}_{y_i}$ for $i=1,2$ the following stopping time,
\begin{displaymath}
s^{\flat}_{y_i}=\inf \{ n \geq \flat : M_n^{(i)}=y_i, \forall k \leq n : M_n^{(i)} \neq 0 \}, \textrm{ with}, \flat \in \{ 0,1 \}.
\end{displaymath}
Then, we will denote by $g_u(y)$ the probability
\begin{displaymath}
g_u(y)=\mathbf{P}^{(y_1,u)}(s^0_{y_2} < \infty | M_{\tau_1}^{(1)} \geq y_1 ).
\end{displaymath}
Finally, the quantity $h_y$ will denote the probability
\begin{displaymath}
h_y=\mathbf{P}^{(y_1,y)}(s^1_y < \infty | M_{\tau_1}^{(1)} \geq y_1).
\end{displaymath}

\begin{prp} \label{decomp-esp}
The quantity $\mathbf{E}^x(\eta_{0,\tau_1}(y))$ is given by
\begin{displaymath}
\mathbf{E}^x(\eta_{0,\tau_1}(y)) = \frac{1}{(1-h_{y_2})^2} \sum_{u \geq 0} \mu_x(u) g_u(y_2)
\end{displaymath}
where $\mu_x$ is defined by
\begin{displaymath}
\mu_x(u)=\mathbf{P}^x(M_{s^0_{y_1}}=u, M_{\tau_1}^{(1)} \geq y_1).
\end{displaymath}
\end{prp}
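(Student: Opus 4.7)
The starting point would be the geometric observation specific to $\mathbb{H}$: on every row $h > 0$ the orientation is $\epsilon_h = +1$, so the horizontal coordinate $M^{(1)}$ is non-decreasing along each excursion into the upper half-plane. Under the hypotheses $x_2, y_2 \geq 0$ and $y_1 \geq x_1$, all visits to $y = (y_1, y_2)$ before $\tau_1$ must therefore take place during the walker's sojourn in column $y_1$, which begins at $s^0_{y_1}$ and ends either by a horizontal step to column $y_1+1$ or by a return to the axis. An immediate consequence is that on $\{s^0_{y_1} < \infty\}$ the event $\{M^{(1)}_{\tau_1} \geq y_1\}$ is automatic, so $\mu_x$ is genuinely the entry distribution into column $y_1$.

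With this in hand, the first step is to apply the strong Markov property at $s^0_{y_1}$. Since $M$ does not visit $y$ on $[0, s^0_{y_1})$, one obtains
\[
\mathbf{E}^x\bigl(\eta_{0,\tau_1}(y)\bigr) = \sum_{u \geq 0} \mu_x(u)\, \mathbf{E}^{(y_1,u)}\bigl(\eta_{0,\tau_1}(y)\bigr),
\]
and the proposition reduces to the identity $\mathbf{E}^{(y_1,u)}(\eta_{0,\tau_1}(y)) = g_u(y_2)(1-h_{y_2})^{-2}$. I would attack this by a double renewal. First, the strong Markov property at $s^0_{y_2}$ (first hit of height $y_2$, probability $g_u(y_2)$) reduces the problem to the special case $u = y_2$, i.e.\ to showing $\mathbf{E}^{(y_1,y_2)}(\eta_{0,\tau_1}(y)) = (1-h_{y_2})^{-2}$. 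Second, using the Markov property and the translation invariance of the walk in the upper half-plane, the successive visits to height $y_2$ form a geometric renewal with parameter $1 - h_{y_2}$, producing one factor of $(1-h_{y_2})^{-1}$. The remaining factor of $(1-h_{y_2})^{-1}$ comes from refining these height-$y_2$ visits by their column: among consecutive height-$y_2$ visits, the probability of staying in the current column can be expressed in terms of $h_{y_2}$ via a further application of translation invariance, and the resulting geometric series collapses into the second $(1-h_{y_2})^{-1}$.

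The main obstacle will be the algebraic identification of the ``column-renewal'' parameter with $h_{y_2}$. Concretely, one has to decompose a generic ``next visit to height $y_2$'' into the two cases ``same column'' versus ``strictly later column'', and match both probabilities with quantities computable from $h_{y_2}$; the delicate point is to see how translation invariance in the $x$-direction (applicable because $\epsilon_h$ is constant on rows of fixed sign) makes the column-renewal parameter coincide with $h_{y_2}$ itself, rather than with some column-restricted return probability, so that the two renewal factors combine as $(1-h_{y_2})^{-2}$. Once this short identity is established, inserting it into the decomposition of Paragraph 2 gives the stated formula.
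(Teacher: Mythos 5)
Your opening geometric observations and the strong Markov decomposition at $s^0_{y_1}$ match the paper's proof: both start from
\begin{displaymath}
\mathbf{E}^x(\eta_{0,\tau_1}(y)) = \sum_{u \geq 0} \mu_x(u)\, \mathbf{E}^{(y_1,u)}\bigl(\eta_{0,\tau_1}(y) \mid M^{(1)}_{\tau_1} \geq y_1\bigr),
\end{displaymath}
and both use that $\{s^0_{y_1}<\infty\}=\{M^{(1)}_{\tau_1}\geq y_1\}$ by the rightward drift in the upper half-plane.

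Where you diverge, and where I think there is a genuine gap, is in the evaluation of the conditional expectation. The paper does not perform a ``double renewal.'' It records the conditional occupation-time distribution as a \emph{single} geometric renewal,
\begin{displaymath}
\mathbf{P}^{(y_1,u)}\bigl(\eta_{0,\tau_1}(y)=k \mid M^{(1)}_{\tau_1} \geq y_1\bigr) = g_u(y_2)\, h_{y_2}^{k-1},
\end{displaymath}
and then sums $\sum_{k\geq 1} k\, g_u(y_2)\, h_{y_2}^{k-1}$; the factor $(1-h_{y_2})^{-2}$ is simply the algebraic identity $\sum_{k\geq 1} k h^{k-1} = (1-h)^{-2}$, not the product of two independent renewal counts. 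Your proposed second factor of $(1-h_{y_2})^{-1}$ from ``refining height-$y_2$ visits by their column'' cannot be correct as stated: the visits to $y=(y_1,y_2)$ form a \emph{sub}sequence of the visits to height $y_2$, so a column refinement can only produce a factor at most one, never an additional $(1-h_{y_2})^{-1}\geq 1$. The ``main obstacle'' you flag --- matching a column-renewal parameter with $h_{y_2}$ --- is not something the paper ever has to resolve, because it never splits the renewal in two.

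A second, related imprecision: the strong Markov step at $s^0_{y_2}$ does not cleanly reduce to the case $u=y_2$ with prefactor $g_u(y_2)$, because from $(y_1,u)$ the first hit of height $y_2$ may occur at a column strictly larger than $y_1$ (a horizontal step may be taken before height $y_2$ is reached), in which case $\eta_{0,\tau_1}(y)=0$. The prefactor should be the probability of first hitting height $y_2$ \emph{while still in column $y_1$}, not merely of hitting height $y_2$. In the paper's single-renewal formulation this column constraint is bundled into the pair $(g_u,h_{y_2})$ rather than appearing as a separate renewal to identify.
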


\begin{proof}
It is a matter of fact that
\begin{displaymath}
\mathbf{E}^x(\eta_{0,\tau_1}(y))=\sum_{k \geq 0} k \mathbf{P}^x(\eta_{0,\tau_1}=k).
\end{displaymath}
On conditionning by the event $\{ s^0_{y_1} < \infty \}$, which is equal to the event $\{ M_{\tau_1}^{(1)} \geq y_1 \}$, we get
\begin{displaymath}
\mathbf{E}^x(\eta_{0,\tau_1}(y)) =\mathbf{P}^x(M_{\tau_1}^{(1)} \geq y_1) \sum_{k \geq 0} k \mathbf{P}^x(\eta_{0,\tau_1}(y)=k | M_{\tau_1}^{(1)} \geq y_1).
\end{displaymath}
By strong Markov property and observing that $s^0_{y_1}$ is finite on the event $\{ M_{\tau_1}^{(1)} \geq y_1 \}$, we get
\begin{displaymath}
\begin{split}
\mathbf{P}^x(\eta_{0,\tau_1}(y)=k | M_{\tau_1}^{(1)} \geq y_1) = \sum_{u \geq 0} \mathbf{P}^x( & M^{(2)}_{s^0_{y_1}}=u|M^{(1)}_{\tau_1} \geq y_1) \\
& \mathbf{P}^{(y_1,u)}(\eta_{0,\tau_1}(y)=k|M_{\tau_1}^{(1)} \geq y_1).
\end{split}
\end{displaymath}
Then, it is easy to see that
\begin{displaymath}
\mathbf{P}^{(y_1,u)}(\eta_{0,\tau_1}(y)=k | M_{\tau_1}^{(1)} \geq y_1)=g_u(y_2) h_{y_2}^{k-1}.
\end{displaymath}
Finally,  we get
\begin{displaymath}
\mathbf{E}^x(\eta_{0,\tau_1}(y)) = \sum_{k \geq 0} k \sum_{u \geq 0} \mu_x(u)g_u(y_2) h_{y_2}^{k-1}
\end{displaymath}
and grouping all terms, we obtain
\begin{displaymath}
\mathbf{E}^x(\eta_{0,\tau_1}(y)) = \frac{1}{1-h_{y_2}} \sum_{u \geq 0} \mu_x(u) g_u(y_2),
\end{displaymath}
proving thus the proposition.
\end{proof}

It is easy to get a upper bound for the probability $h_{y_2}$ because at the site $(y_1,y_2)$ it is possible to never come back with probability at least $1/3$ so that the quantity $(1-h_{y_2})^{-2}$ does not play any role in the asymptotics of the mean $\mathbf{E}^x(\eta_{0,\tau_1}(y))$.

\begin{prp} \label{exp-dec-g}
For any $u \geq 0$, the quantity $g_u(y)$ decreases exponentially fast to $0$ as $y$ goes to $\infty$.
\end{prp}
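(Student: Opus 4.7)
The key geometric fact is that on $\mathbb{H}$ one has $\epsilon_z = +1$ for every $z \geq 1$, so in the upper half-plane every horizontal edge points strictly to the right. Since the walker remains in the upper half-plane throughout the open interval $(0,\tau_1)$ when started from $(y_1,u)$ with $u \geq 1$, any horizontal step strictly increases the first coordinate, after which the walker can never return to first coordinate $y_1$ before $\tau_1$. Consequently the event underlying $g_u(y)$ --- visiting the two-dimensional point $y=(y_1,y_2)$ before $\tau_1$ --- coincides with the event that the vertical coordinate reaches $y_2$ before hitting $0$ and before the walker takes any horizontal step. The conditioning $\{M_{\tau_1}^{(1)} \geq y_1\}$ is automatic in this regime, again because horizontal moves in the upper half-plane are rightward, so it may be dropped.

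My plan is then to set $V(u) := g_u(y_2)$ for $0 \leq u \leq y_2$ and to perform a first-step analysis at $(y_1,u)$ with $0 < u < y_2$. The three out-edges $(y_1,u\pm 1)$ and $(y_1+1,u)$ each carry probability $\tfrac{1}{3}$, and taking the horizontal edge contributes zero to $V$, so
\begin{displaymath}
V(u) = \tfrac{1}{3}\,V(u+1) + \tfrac{1}{3}\,V(u-1), \qquad V(0)=0, \quad V(y_2)=1.
\end{displaymath}
The associated linear recurrence $V(u+1)-3V(u)+V(u-1)=0$ has characteristic equation $\lambda^2 - 3\lambda + 1 = 0$, with roots $\rho_\pm = (3\pm\sqrt{5})/2$ satisfying $\rho_- \rho_+ = 1$ and $0 < \rho_- < 1 < \rho_+$. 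Imposing the boundary conditions yields the closed form
\begin{displaymath}
g_u(y_2) = V(u) = \frac{\rho_+^{u} - \rho_-^{u}}{\rho_+^{y_2} - \rho_-^{y_2}}.
\end{displaymath}

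For $u$ fixed and $y_2 \to \infty$, the denominator is asymptotic to $\rho_+^{y_2}$ while the numerator is constant, so $g_u(y_2) \sim (\rho_+^u - \rho_-^u)\,\rho_+^{-y_2}$, giving exponential decay at rate $\log\rho_+ = \log((3+\sqrt{5})/2)$. The only delicate step in the argument is the structural reduction in the first paragraph --- the observation that horizontal displacement in the upper half-plane is monotone and hence any departure from the vertical line $\{x_1 = y_1\}$ is irreversible within the excursion $[0,\tau_1]$. Once this observation is in hand, everything else is a standard two-sided exit computation.
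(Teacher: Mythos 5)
Your proof is correct and takes a genuinely different route from the paper, while resting on the same underlying geometric observation. Both arguments reduce the problem to a one-dimensional walk on $\mathbb{Z}$ killed with probability $1/3$ at each step, the killing event being a horizontal move, which carries the walker irreversibly off the column $\{x_1=y_1\}$ since $\epsilon_z=+1$ for $z\geq 1$. The paper then bounds the killed walk's hitting probability from above by the hitting probability in the homogeneous tree of degree $3$ (replacing each cemetery site by an attached binary tree) and invokes Lemma~1.24 of Woess to obtain $p_u(y_2)\leq 2^{-(y_2-u)}$. You instead solve the killed walk's exit problem exactly by first-step analysis on $\{0,1,\ldots,y_2\}$: the recurrence $3V(u)=V(u+1)+V(u-1)$ with $V(0)=0$, $V(y_2)=1$ gives $V(u)=(\rho_+^u-\rho_-^u)/(\rho_+^{y_2}-\rho_-^{y_2})$ with $\rho_\pm=(3\pm\sqrt{5})/2$, hence decay at the sharp rate $\log\rho_+=\log\frac{3+\sqrt{5}}{2}$. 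Your version is more elementary and self-contained, yields the exact exponential rate rather than the cruder $\log 2$ upper bound, and explicitly justifies dropping the conditioning on $\{M_{\tau_1}^{(1)}\geq y_1\}$ (it has probability one from $(y_1,u)$ with $u\geq 1$ because horizontal steps in the upper half-plane only increase $M^{(1)}$), a step the paper leaves implicit while still carrying the conditioning in its notation.
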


\begin{proof}
Recall that
\begin{displaymath}
g_u(y_2)=\mathbf{P}^{(y_1,u)}(s^0_{y_2} < \infty | M_{\tau_1}^{(1)} \geq y_1).
\end{displaymath}
Actually we can majorize $g_u$ by 
\begin{displaymath}
g_u(y_2) \leq \mathbf{P}^{(y_1,u)}(\exists n \geq 0 : M_n^{(2)}=y_2 | M_{\tau_1}^{(1)} \geq y_1) = p_u(y_2).
\end{displaymath}
Then, we can identify this probability with the probability to reach $y_2$ from $u$ in the model of a simple random walk on $\mathbb{Z}$ with a cemetery attached to each site, where the random walk can die with probability $1/3$. 

If we replace the cemetery by binary trees, then the probability $p_u$ satisfies
\begin{displaymath}
p_u(y_2) \leq F(u,y_2)
\end{displaymath}
where $F(u,y_2)$ is the probability to hit $y_2$ from $u$ in a homogenous tree of degree 3. By the lemma (1.24), found on p.9 of \cite{woessbook}, we get $F(u,y_2)=2^{-d(u,y_2)}$ where $d$ is the usual graph metric in the tree. Thus, $g_u(y_2)$ decreases exponentially fast to 0.

\end{proof}

\begin{prp} \label{ratesummux}
The quantity $\sum_{u \geq 0} \mu_x(u) g_u(y_2)$ behaves like $o(|y_2|^{-1})$ whenever $\frac{y_1}{y_2^2}$ converges to a finite limit and like $o(|y_1|^{-\frac{1}{2}})$ in other cases, namely when $\frac{y_1}{y_2^2}$ goes to $\pm \infty$.
\end{prp}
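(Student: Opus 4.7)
The plan is to combine the exponential decay of $g_u(y_2)$ from Proposition~\ref{exp-dec-g} with a pointwise local-limit estimate for $\mu_x(u)$ near $u=y_2$. The exponential decay localises the convolution-type sum to a short window around $u=y_2$, and inside the window a sharp bound on $\mu_x$ provides the extra decay needed to improve from $O$ to $o$ over the crude estimate $\sum_u\mu_x(u)=\mathbf{P}^x(M_{\tau_1}^{(1)}\ge y_1)$.

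First, by Proposition~\ref{exp-dec-g} there exist $C>0$ and $\rho\in(0,1)$ with $g_u(y_2)\le C\rho^{|u-y_2|}$, hence
\[
\sum_{u\ge 0}\mu_x(u)\,g_u(y_2)\le C\sum_{u\ge 0}\mu_x(u)\,\rho^{|u-y_2|}.
\]
The tail $\{|u-y_2|>A\log(1+y_2)\}$ contributes at most a constant times $\rho^{A\log(1+y_2)}$, which is smaller than any inverse polynomial in $1+y_2$ for $A$ sufficiently large; only the window $W=\{|u-y_2|\le A\log(1+y_2)\}$ needs to be controlled.

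The key step is a pointwise bound on $\mu_x(u)$ for $u\in W$. In the open upper half-plane only the moves up, down, and right are available, each with probability $1/3$, so the vertical coordinate of the walk killed at the axis is a lazy symmetric random walk on $\mathbb{Z}_{>0}$, and the horizontal coordinate is a simple counting process of the right-moves. A local central limit theorem for the walk $h$-transformed by the positive harmonic function $v\mapsto v$ (whose diffusive rescaling is the Brownian meander) should yield
\[
\mu_x(u)\le \frac{C\,x_2\,u}{y_1^{3/2}}\exp\!\left(-\frac{u^2}{2y_1}\right)\!,
\]
uniformly for $u\in W$. Injecting this estimate: when $y_1\sim\lambda y_2^2$, $u\sim y_2$ gives $\mu_x(u)=O(y_2^{-2})$, so the sum over $W$ is $O(y_2^{-2})=o(y_2^{-1})$; when $y_1/y_2^2\to\infty$ the same formula yields $\mu_x(u)=O((1+y_2)\,y_1^{-3/2})$ for $u\in W$, and since $(1+y_2)/\sqrt{y_1}\to 0$ the sum is $o(y_1^{-1/2})$.

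The main obstacle is the uniform local-limit bound on $\mu_x(u)$. I expect this to follow either from classical local central limit theorems for $h$-transformed random walks on $\mathbb{Z}_{>0}$ combined with the negative-binomial time change governing the horizontal count, or via a direct Fourier-analytic computation in the spirit of Sections~\ref{secembedded} and~\ref{green-est} applied to the killed walk in the half-plane; the exponential weight $\rho^{|u-y_2|}$ makes the bound on $\mu_x(u)$ only genuinely needed in an $O(\log y_2)$-window, which simplifies the required estimates.
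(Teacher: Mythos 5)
Your plan takes a genuinely different route from the paper. The paper substitutes the Fourier representation of $\mu_x(u)$ (obtained via the mirroring principle and the generating function $F$ of the hitting time $T$ of Lemma~\ref{genfunc}) directly into the weighted sum $\sum_u\mu_x(u)2^{-|y_2-u|}$, closes the geometric sums in $u$ analytically, and estimates the resulting oscillatory integrals $I_1,I_2$ by the rescaling-and-dominated-convergence technique already in place in Section~\ref{mk-conditioned}. You instead truncate the sum to a logarithmic window and propose a pointwise local-limit bound $\mu_x(u)\lesssim x_2 u\,y_1^{-3/2}\exp(-cu^2/y_1)$. Conceptually your route is cleaner and more probabilistic, and the heuristic is consistent with what the paper's Fourier computation ultimately produces in averaged form; the cost is that this pointwise bound, which the paper never isolates as a statement, essentially \emph{is} the proposition, and you do not prove it. You rightly flag it as the main obstacle, but two points deserve emphasis: uniformity in the moderate-deviation regime $u^2\gg y_1$ is exactly what matters when $y_1/y_2^2\to 0$ (which is within the scope of the first case of the statement), and the number $m$ of vertical steps taken before the horizontal coordinate reaches $y_1$ is itself random (negative-binomial), so the local limit theorem for the killed walk must be integrated against that law with suitable uniformity in $m$. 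The paper's computational detour sidesteps both issues at the price of more algebra.

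There is also a concrete bug in the truncation step. You bound the tail $\{|u-y_2|>A\log(1+y_2)\}$ by a constant times $\rho^{A\log(1+y_2)}$ and call this smaller than any inverse polynomial in $1+y_2$. But in the regime $y_1/y_2^2\to\infty$ the target is $o(|y_1|^{-1/2})$, and when $y_2$ stays bounded while $y_1\to\infty$ the factor $\rho^{A\log(1+y_2)}$ is of order one; even the sharper total-mass estimate $\sum_u\mu_x(u)=\mathbf{P}^x(M_{\tau_1}^{(1)}\geq y_1)=O(y_1^{-1/2})$ then gives a tail of size $O(y_1^{-1/2})$, not $o(y_1^{-1/2})$. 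The repair is not to truncate at all: if your pointwise bound holds for all $u$, summing it against the geometric weight gives $\sum_{u\ge 0} u\,\rho^{|u-y_2|}=O(1+y_2)$ and hence $O((1+y_2)y_1^{-3/2})$, which is $o(y_1^{-1/2})$ whenever $y_1/y_2^2\to\infty$, bounded $y_2$ included. As written, the log-window argument has a hole precisely where the claimed rate is finest.
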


Before giving the proof of this fact, let us introduce some notation. We will denote by $(S_n)_{n \geq 0}$ the simple symmetric random walk on $\mathbb{Z}$. Recall that the characteristic function of $(S_n)$ starting from $z$ is given by
\begin{displaymath}
\mathbf{E}^z(e^{itS_n})=e^{itz}(e^{it(S_1-S_0)})^n= e^{itz}(\cos(t))^n
\end{displaymath}

On the set $\mathbb{N}$ we define the following Markov chain $(Z_n)_{n \geq 0}$ by its Markov operator $q : \mathbb{N} \times \mathbb{N} \mapsto [0,1]$ by
\begin{displaymath}
q(x,y)= \left \{ 
\begin{array}{ll} 
\frac{2}{3} & y=x \geq 1 \\
\frac{1}{3} & y=x-1, x \geq 1 \\
1 & x=y=0 \\
0 & \textrm{ otherwise } \\
\end{array} \right .
\end{displaymath}

On introducing the stopping time
\begin{displaymath}
T=\inf \{ n \geq 0 : Z_n=0 \},
\end{displaymath}
it is easy to compute its generating function.

\begin{lem} \label{genfunc}
The generation function of $T$ is given for any $h \geq 0$ by
\begin{displaymath}
\mathbf{E}^h(x^T)=\left (\frac{x}{3-2x} \right )^h.
\end{displaymath}
\end{lem}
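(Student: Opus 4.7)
The plan is to decompose the hitting time $T$ into independent one-step descents, each of which is geometric, and then invoke the strong Markov property. Starting from a state $h\geq 1$, the chain $(Z_n)$ either stays (with probability $2/3$) or moves down one step (with probability $1/3$), and it never increases. Hence the time $G_k$ needed to pass from level $k$ to level $k-1$ is a $\{1,2,3,\dots\}$-valued geometric random variable with success probability $1/3$, and by the strong Markov property applied at the successive times of descent, the variables $G_h, G_{h-1}, \ldots, G_1$ are independent. Since reaching $0$ from $h$ requires exactly $h$ unit descents, we have the representation
\begin{displaymath}
T = G_h + G_{h-1} + \cdots + G_1 \quad \text{$\mathbf{P}^h$-almost surely.}
\end{displaymath}

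Next, I would compute the generating function of a single $G_k$ by summing the geometric series:
\begin{displaymath}
\mathbf{E}(x^{G_k}) = \sum_{n \geq 1} \left(\frac{2}{3}\right)^{n-1} \frac{1}{3} x^n = \frac{x/3}{1 - (2x/3)} = \frac{x}{3-2x}.
\end{displaymath}
By independence of the $G_k$'s, the generating function of $T$ factorises, giving
\begin{displaymath}
\mathbf{E}^h(x^T) = \prod_{k=1}^h \mathbf{E}(x^{G_k}) = \left(\frac{x}{3-2x}\right)^h,
\end{displaymath}
which is exactly the claimed formula. The case $h=0$ is trivial since then $T=0$ and both sides equal $1$.

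There is no real obstacle here; the only thing to be careful about is the independence of the successive descent times, which follows cleanly from the strong Markov property applied at the stopping times $T_k = \inf\{n \geq 0 : Z_n = h-k\}$ for $k = 0, 1, \ldots, h$. Alternatively, one could give an inductive proof by noting that $f(x) := \mathbf{E}^1(x^T)$ satisfies the one-step equation $f(x) = \tfrac{2}{3}x f(x) + \tfrac{1}{3} x$, yielding $f(x) = x/(3-2x)$, and then extending to general $h$ by the Markov property, but the direct decomposition above is the cleanest route.
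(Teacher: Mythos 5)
Your proof is correct and takes essentially the same approach as the paper: both exploit the factorisation $\mathbf{E}^h(x^T)=\mathbf{E}^1(x^T)^h$ (your decomposition $T=G_h+\cdots+G_1$ into independent geometric descent times is precisely that), and then compute the $h=1$ case. The only cosmetic difference is that you sum the geometric series directly whereas the paper writes the one-step recursion $\mathbf{E}^1(x^T)=\tfrac{2}{3}x\,\mathbf{E}^1(x^T)+\tfrac{1}{3}x$ and solves, a variant you yourself note at the end.
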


\details{ \begin{proof}
Obviously, we have $\mathbf{E}^h(x^T)=\mathbb{E}^1(x^T)^h$, thus we only have to compute the generating function in the case $h=1$. Factorizing by the first step,
\begin{displaymath}
\mathbf{E}^1(x^T) = \sum_{k \geq 0} x^k \mathbf{P}^1(T=k) = \frac{2}{3}x\mathbf{E}^1(x^T) + \frac{1}{3}x
\end{displaymath}
then the lemma is proved.
\end{proof} }

We can now prove the proposition \ref{ratesummux}.

\begin{proof}
We can show that
\begin{displaymath}
\begin{split}
\mu_x(u)=\mathbf{P}^x(M^{(2)}_{s_{y_1}^0}=u, M^{(1)}_{\tau_1} \geq y_1) = \sum_{m \geq 0} \mathbf{P}^{x_2}( & S_m=u : S_k \neq 0, k \leq m) \\
& \mathbf{P}^1(T=m+(y_1-x_1)).
\end{split}
\end{displaymath}
Then, by the mirroring principle, we have that 
\begin{displaymath}
\mathbf{P}^{x_2}(S_m=u : S_k \neq 0, k \leq m)=\mathbf{P}^{x_2}(S_m=u)-\mathbf{P}^{-x_2}(S_m=u).
\end{displaymath}
Thus,
\begin{displaymath}
\begin{split}
\mu_x(u) & =\sum_{m \geq 0} \mathbf{P}^{x_2}(S_m=u)\mathbf{P}^1(T=m+(y_1-x_1)) \\
& - \sum_{m \geq 0} \mathbf{P}^{-x_2}(S_m=u)\mathbf{P}^1(T=m+(y_1-x_1)) \\
& = \Sigma_1(x,y,u) + \Sigma_2(x,y,u). \\
\end{split}
\end{displaymath}
Then, let us compute the sum $\Sigma_1(x,y,u)$,
\begin{equation} \label{mux1}
\begin{split}
\Sigma_1(x,y,u) & =\frac{1}{2\pi} \int_{-\pi}^\pi e^{itx_2} \sum_{m \geq 0} (\cos(t))^m \mathbf{P}^1(T=m+(y_1-x_1)) e^{-itu} dt \\
& = \frac{1}{2\pi} \int_{-\pi}^\pi F(\cos(t))^{y_1-x_1} e^{itx_2-itu} dt \\
\end{split}
\end{equation}
where $F(x)=\mathbf{E}^1(x^T)$ is the generating function of $T$. Whereas the sum $\Sigma_2(x,y_1,u)$ is given by
\begin{equation} \label{mux2}
\Sigma_2(x,y,u)=\frac{1}{2\pi} \int_{-\pi}^\pi F(\cos(t))^{y_1-x_1} e^{-itx_2-itu} du.
\end{equation}
As a consequence,
\begin{displaymath}
\mu_x(u)=\frac{1}{2\pi} \int_{-\pi}^\pi F(\cos(t))^{y_1-x_1} 2i \sin(tx_2) e^{-itu} dt.
\end{displaymath}
Now, from proposition \ref{exp-dec-g}, we get that
\begin{displaymath}
\sum_{u \geq 0} \mu_x(u) g_{y_2}(u) \leq \sum_{u \geq 0} \mu_x(u) 2^{-|y_2-u|}
\end{displaymath}
Split the sum
\begin{displaymath}
\begin{split}
\sum_{u \geq 0} \mu_x(u) 2^{-|y_2-u|} & = \sum_{u=0}^{y_2-1} \mu_x(u) 2^{-(y_2-u)} \\
& + \sum_{u=y_2}^\infty \mu_x(u) 2^{-(u-y_2)} \\
& = \Sigma_3(x,y)+\Sigma_4(x,y), \\
\end{split}
\end{displaymath}
and, injecting (\ref{mux1}) and (\ref{mux2}), sums $\Sigma_3(x,y)$ and $\Sigma_4(x,y)$ become
\begin{equation} \label{sum3}
\Sigma_3(x,y)=\frac{1}{2\pi} \int_{-\pi}^\pi F(\cos(t))^{y_1-x_1} 2i \sin(tx_2) \sum_{u=0}^{y_2-1} e^{-itu} 2^{u-y_2} dt.
\end{equation}
The geometric sum can be simplified by observing that
\begin{displaymath}
\sum_{u=0}^{y_2-1} e^{-itu}2^{u-y_2} = 2^{-y_2} \frac{(2e^{-it})^{y_2}-1}{2e^{-it}-1}
\end{displaymath}
hence, the sum (\ref{sum3}) becomes
\begin{equation} \label{finalsum3}
I_1(x,y)=\frac{1}{2\pi} \int_{-\pi}^\pi F(\cos(t))^{y_1-x_1} 2i \sin(tx_2) 2^{-y_2} \frac{(2e^{-it})^{y_2}-1}{2e^{-it}-1} dt.
\end{equation}

Similarly,
\begin{equation} \label{sum4}
\Sigma_4(x,y)=\frac{1}{2\pi} \int_{-2\pi}^\pi F(\cos(t))^{y_1-x_1} 2i\sin(tx_2) \sum_{u=y_2}^\infty e^{-itu} 2^{-(u-y_2)} dt,
\end{equation}
so that simplifying the geometric sum
\begin{displaymath}
\sum_{u=y_2}^\infty e^{-itu} 2^{-(u-y_2)} = e^{-ity_2} (1-\frac{e^{-it}}{2})^{-1}
\end{displaymath}
integral (\ref{sum4}) becomes
\begin{equation} \label{finalsum4}
I_2(x,y)= \frac{1}{2\pi} \int_{-\pi}^\pi F(\cos(t))^{y_1-x_1} 2i\sin(tx_2) e^{-ity_2} \frac{2}{2-e^{-it}} dt.
\end{equation}

At this step, it remains to study the rate of convergence of $I_1(x,y)$ and $I_2(x,y)$. We have to distinguish two cases depending on the way that $(y_1,y_2)$ goes to infinity :
\begin{itemize}
\item $y_1$ remains bounded ; 
\item $\lim \frac{y_2^2}{y_1} = \lambda$ for $\lambda \in \mathbb{R} \cup \{\pm \infty \}$ and $y_1$ is unbounded.
\end{itemize}

Let us handle the first case, and assume that $y_1$ is bounded. The function $F$ has a unique singularity for $x=\frac{3}{2}$ so that $F(\cos(\cdot))$ is infinitely continuously differentiable for $|t|\leq \pi$. As a consequence of lemma \ref{fourier}, the quantity $I_2(x,y)$ decreases like $\mathscr{O} \left (\frac{y_1^k}{y_2^k} \right )$ for arbitrary $k \geq 0$, \emph{i.e.} like $\mathscr{O} \left (\frac{1}{y_2^k} \right )$ because $y_1$ is supposed to be bounded. For the quantity $I_1(x,y)$, we have the following
\begin{displaymath}
\begin{split}
\int_{-\pi}^\pi F(\cos(t))^{y_1-x_1} 2i & \sin(tx_2) 2^{-y_2} \frac{(2e^{-it})^{y_2}-1}{2e^{-it}-1} dt \\ 
& = \int_{-\pi}^\pi F(\cos(t))^{y_1-x_1} \frac{2i\sin(tx_2)}{2e^{-it}-1} e^{-ity_2} dt \\
& - 2^{-y_2} \int_{-\pi}^\pi F(\cos(t))^{y_1-x_1} \frac{2i\sin(tx_2)}{2e^{-it}-1} dt
\end{split}
\end{displaymath}
Then, on one side, the first term goes to $0$ as $\mathscr{O}(\frac{1}{y_2^k})$ by lemma \ref{fourier} --- it is the same arguments as for the quantity $I_2(x,y)$ --- and on the other side, the second term goes obviously exponentially fast to $0$. Summarising, if $y_1$ remains bounded we have that
\begin{displaymath}
\mathbf{E}^x(\eta_{0,\tau_1}(y)) = \mathscr{O} \left (\frac{1}{y_2^k} \right)
\end{displaymath}
where $k$ is non negative and can be arbitrarily large.

Let us deal with the second case, and suppose that $y_1$ is unbounded. Rewriting the quantity $I_2(x,y)$ by setting $t=\frac{u}{\sqrt{|y_1|}}$, we get
\begin{equation} \label{split}
\begin{split}
& \frac{x_2}{y_1} 2i \int_{-\pi}^\pi F(\cos(t))^{y_1-x_1} \frac{y_1}{x_2} \sin(x_2 t) \frac{2}{2-e^{-it}} e^{-ity_2} dt \\
& = \frac{x_2}{y_1} 2i \int_{-\pi\sqrt{|y_1|}}^{\pi\sqrt{|y_1|}} F \left ( \cos\frac{t}{\sqrt{|y_1|}} \right )^{y_1-x_1} \frac{\sqrt{|y_1|}}{x_2} \sin \left (\frac{x_2 t}{\sqrt{|y_1|}} \right ) \frac{2e^{-i\frac{ty_2}{\sqrt{|y_1|}}}}{2-e^{-i\frac{t}{\sqrt{|y_1|}}}}  dt. \\
\end{split}
\end{equation}
Therefore,
\begin{displaymath}
\begin{split}
F \left ( \cos \frac{t}{\sqrt{|y_1}} \right )^{y_1-x_1} & = \exp \left \{-\frac{3}{2} \frac{y_1-x_1}{y_1} t^2 + \frac{y_1-x_1}{y_1}t^2 \epsilon \left ( \frac{t^2}{y_1} \right ) \right \} \\
& \longrightarrow e^{-\frac{3}{2} t^2} \textrm{ as } \frac{t^2}{|y_1|} \to 0, \\
\end{split}
\end{displaymath}
implying the following pointwise convergence,
\begin{displaymath}
F \left ( \cos\frac{t}{\sqrt{|y_1|}} \right )^{y_1-x_1} \frac{\sqrt{|y_1|}}{x_2} \sin \left (\frac{x_2 t}{\sqrt{|y_1|}} \right ) \frac{2}{2-e^{-i\frac{t}{\sqrt{|y_1|}}}} \longrightarrow e^{-\frac{3}{2}t^2}t
\end{displaymath}
as $\frac{t^2}{y_1} \to 0$. Let $\epsilon_0 > 0$ such that $\left | \frac{t^2}{y_1} \right | < \epsilon_0$, \emph{i.e.} $\left | \epsilon \left ( \frac{t^2}{y_1} \right ) \right | \leq \frac{3}{4}$. Then we get the domination
\begin{displaymath}
\begin{split}
\left | F \left ( \cos\frac{t}{\sqrt{|y_1|}} \right )^{y_1-x_1} \right . & \left . \frac{\sqrt{|y_1|}}{x_2} \sin \left (\frac{x_2 t}{\sqrt{|y_1|}} \right ) \frac{2}{2-e^{-i\frac{t}{\sqrt{|y_1|}}}} \right | \\
& \leq 2 M e^{-\frac{3}{2} t^2} |t| e^{\left | \frac{y_1-x_1}{y_1} \right | t^2 \left | \epsilon \left ( \frac{t^2}{y_1} \right ) \right |} \\
& \leq 2Me^{-\frac{3}{8} t^2} |t|.
\end{split}
\end{displaymath}
Consequently, we can split the integral (\ref{split}) as follows
\begin{displaymath} \label{qtyepsilon}
\begin{split}
\frac{x_2}{y_1} & 2i \int_{-\pi\sqrt{|y_1|}}^{\pi\sqrt{|y_1|}} F \left ( \cos\frac{t}{\sqrt{|y_1|}} \right )^{y_1-x_1} \frac{\sqrt{|y_1|}}{x_2} \sin \left (\frac{x_2 t}{\sqrt{|y_1|}} \right ) \frac{2e^{-i\frac{ty_2}{\sqrt{|y_1|}}}}{2-e^{-i\frac{t}{\sqrt{|y_1|}}}}  dt  \\
& = \frac{x_2}{y_1} 2i \int_{-\epsilon_0\sqrt{|y_1|}}^{\epsilon_0 \sqrt{|y_1|}} F \left ( \cos\frac{t}{\sqrt{|y_1|}} \right )^{y_1-x_1} \frac{\sqrt{|y_1|}}{x_2} \sin \left (\frac{x_2 t}{\sqrt{|y_1|}} \right ) \frac{2e^{-i\frac{ty_2}{\sqrt{|y_1|}}}}{2-e^{-i\frac{t}{\sqrt{|y_1|}}}}  dt \\
& + \frac{x_2}{y_1} 2i \int_{|t|>\sqrt{|y_1|} \epsilon_0} F \left ( \cos\frac{t}{\sqrt{|y_1|}} \right )^{y_1-x_1} \frac{\sqrt{|y_1|}}{x_2} \sin \left (\frac{x_2 t}{\sqrt{|y_1|}} \right ) \frac{2e^{-i\frac{ty_2}{\sqrt{|y_1|}}}}{2-e^{-i\frac{t}{\sqrt{|y_1|}}}}  dt \\
& = I_3(x,y) + I_4(x,y).
\end{split}
\end{displaymath}

The integral $I_3(x,y)$ converges by Lebesgue convergence to the integral
\begin{displaymath}
\int_{-\infty}^\infty e^{-\frac{3}{2}t^2} t e^{-it \lambda} dt
\end{displaymath}
with $\lambda = \lim \frac{y_2}{\sqrt{|y_1|}}$. And this integral can be easily computed,
\begin{displaymath}
\int_{-\infty}^\infty e^{-\frac{3}{2}t^2} t e^{-it\lambda} dt = \frac{i\lambda}{3} \int_{-\infty}^\infty e^{-\frac{3}{2}t^2} e^{-it\lambda} dt = \frac{i\lambda}{3} \sqrt{\frac{2\pi}{3}} e^{-\frac{\lambda^2}{6}}.
\end{displaymath}
Then substituting $\lambda$ by the ratio $\frac{y_2^2}{y_1}$ the quantity $(12)$ becomes
\begin{displaymath}
- \frac{2}{3} \frac{x_2}{y_1} \sqrt{\frac{2\pi}{3}} \frac{y_2}{\sqrt{|y_1|}} e^{-\frac{1}{6} \frac{y_2^2}{y_1}}.
\end{displaymath}
We conclude that,
\begin{itemize}
\item if $\frac{y_2^2}{y_1}$ goes to $0$, then $I_3(x,y)$ behaves like $o \left (\frac{1}{\sqrt{|y_1|}} \right )$ ;
\item if $\frac{y_2^2}{y_1}$ goes to $\pm \infty$, $I_3(x,y)$ behaves like $o \left (\frac{1}{|y_2|} \right )$ ;
\item finally, if $\frac{y_2^2}{y_1}$ converges to $\lambda$ non zero real, then $I_3(x,y)$ behaves again like $o(\frac{1}{|y_2|})$.
\end{itemize}

Integrating by parts gives us the following estimate of $I_4(x,y)$,
\begin{displaymath}
\begin{split}
\left | \frac{x_2}{y_1} 2i \right . & \left . \int_{|t|>\sqrt{|y_1|} \epsilon_0} F \left ( \cos\frac{t}{\sqrt{|y_1|}} \right )^{y_1-x_1} \frac{\sqrt{|y_1|}}{x_2} \sin \left (\frac{x_2 t}{\sqrt{|y_1|}} \right ) \frac{2e^{-i\frac{ty_2}{\sqrt{|y_1|}}}}{2-e^{-i\frac{t}{\sqrt{|y_1|}}}}  dt \right | \\
& \leq \frac{My_1}{y_2} L^{y_1-x_1}
\end{split}
\end{displaymath}
because,
\begin{displaymath}
\sup_{|t|>\epsilon_0} \left | \frac{d}{dt} F(\cos(t)) \right | < 1.
\end{displaymath} 
As a consequence, the quantity $I_4(x,y)$ behaves like
\begin{itemize}
\item $o \left (\frac{1}{|y_2|} \right )$ if $\frac{y_1}{y_2^2}$ converges to a finite limit with $y_1$ unbounded.
\item $o \left ( \frac{1}{\sqrt{|y_1|}} \right )$ if $\frac{y_1}{y_2^2}$ goes to $\textsf{sgn}(t) \infty$.
\end{itemize}

Turning to the quantity $I_1(x,y)$, we note that
\begin{displaymath}
\begin{split}
I_1(x,y)=\int_{-\pi}^\pi & F(\cos(t))^{y_1-x_1} 2i \sin(tx_2) 2^{-y_2} \frac{(2e^{-it})^{y_2}-1}{2e^{-it}-1} dt \\ 
& = \int_{-\pi}^\pi F(\cos(t))^{y_1-x_1} 2i \sin(tx_2)\frac{e^{-ity_2}-1}{2e^{-it}-1} dt \\
& - 2^{-y_2} \int_{-\pi}^\pi F(\cos(t))^{y_1-x_1} 2i \sin(tx_2)\frac{1}{2e^{-it}-1} dt \\
= I_5(x,y)+I_6(x,y).
\end{split}
\end{displaymath}
The quantity $I_5(x,y)$ can be estimated the same way the quantity $I_2(x,y)$ is whereas the quantity $I_6(x,y)$ behaves like $o \left (\frac{1}{|y_2|} \right)$ in the case where $\frac{y_1}{y_2^2}$ converges to finite limit. It remains to show that $I_6(x,y)$ behaves like $o \left (\frac{1}{\sqrt{|y_1|}} \right )$ in the case where $\frac{y_1}{y_2^2}$ goes to infinity. We can estimate the integral $I_6(x,y)$ the same way it has been done for the quantity $I_2(x,y)$ in the case of $y_1$ unbounded and $y_2$ fixed.
\end{proof}

Obviously, by symmetry, all these estimations can be made in the case $x_2,y_2 \leq 0$ and $y_1 \leq x_1$. And as soon as, $x_2y_2 < 0$ then the mean $\mathbf{E}^x(\eta_{0,\tau_1}(y))$ is zero, therefore we get the following.

\begin{cor} \label{ccl-firstt}
The quantity
\begin{displaymath}
\frac{\mathbf{E}^x(\eta_{0,\tau_1}(y))}{G(0,y)}
\end{displaymath}
in equation (\ref{decomposition}) goes to $0$ when $|y|$ goes to infinity.
\end{cor}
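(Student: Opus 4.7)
The plan is to combine the estimates of Section \ref{before} for the numerator with those of Section \ref{mk-conditioned} for the denominator; the corollary is then immediate by division.

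First, I would observe that Proposition \ref{decomp-esp} writes $\mathbf{E}^x(\eta_{0,\tau_1}(y)) = (1-h_{y_2})^{-2} \sum_{u \geq 0} \mu_x(u) g_u(y_2)$, and that the remark preceding Proposition \ref{exp-dec-g} provides a uniform bound $h_{y_2} \leq 2/3$ (since from the site $(y_1,y_2)$ there is probability at least $1/3$ never to return to it), so that the prefactor $(1-h_{y_2})^{-2}$ is bounded in $y_2$. Proposition \ref{ratesummux} then yields directly
\begin{displaymath}
\mathbf{E}^x(\eta_{0,\tau_1}(y)) = \begin{cases} o(|y_2|^{-1}) & \text{if } y_1/y_2^2 \to \lambda \in \mathbb{R}, \\ o(|y_1|^{-1/2}) & \text{if } y_1/y_2^2 \to \pm\infty. \end{cases}
\end{displaymath}

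Second, I would read the denominator directly from Proposition \ref{expnoyaufaible}: specializing to $z = 0$ one obtains the Fourier representation
\begin{displaymath}
G(0,y) = \frac{1}{2\pi} \int_{-\pi}^\pi e^{ity_1} \frac{g(r(t))^{|y_2|}}{1-\phi(t)}\, dt,
\end{displaymath}
which is exactly the integral whose asymptotics Propositions \ref{lemy2} and \ref{lemy1} already compute. Consequently $G(0,y)$ is asymptotic to $c/|y_2|$ in the regime $y_1/y_2^2 \to \lambda \in \mathbb{R}$, and to $c'/\sqrt{|y_1|}$ in the regime $y_1/y_2^2 \to \pm\infty$, with nonzero constants $c, c'$. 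Since these two regimes exhaust all the possibilities as $|y| \to \infty$ (up to passing to a subsequence), one may therefore divide in each regime and conclude that $\mathbf{E}^x(\eta_{0,\tau_1}(y))/G(0,y) = o(1)$.

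Finally I would dispatch the sign cases left aside by the standing assumption $x_2,y_2 \geq 0$, $y_1 \geq x_1$ underlying Section \ref{before}. When $x_2 y_2 < 0$, the path from $x$ to $y$ must cross the horizontal axis, so that $\tau_1$ occurs before $y$ is ever visited and $\mathbf{E}^x(\eta_{0,\tau_1}(y)) = 0$ trivially; the remaining sign configurations are obtained by reflecting $\mathbb{H}$ across its coordinate axes, an operation which preserves the random walk's law up to obvious relabelling, so that every estimate above carries over unchanged. I do not anticipate any real obstacle: the heavy technical work is already packaged in Propositions \ref{lemy2}, \ref{lemy1} and \ref{ratesummux}, and the only thing one must be careful about is correctly matching the asymptotic regime $y_1/y_2^2 \to \lambda$ versus $y_1/y_2^2 \to \pm\infty$ on the two sides of the quotient.
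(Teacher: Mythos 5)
Your proposal is correct and follows essentially the same route as the paper: read off the denominator asymptotics ($G(0,y)\sim c'/|y_2|$ when $y_1/y_2^2\to\lambda$ finite, $G(0,y)\sim c/\sqrt{|y_1|}$ when $y_1/y_2^2\to\pm\infty$) from Propositions \ref{lemy2} and \ref{lemy1}, match them against the numerator bounds of Proposition \ref{ratesummux}, and divide. The handling of the bounded prefactor $(1-h_{y_2})^{-2}$ and of the remaining sign configurations is likewise what the surrounding text of the paper does.
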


\begin{proof}
By propositions \ref{lemy2}, \ref{lemy1} and \ref{ratesummux}, we have that
\begin{itemize}
\item $G(o,y)$ is equivalent to $ \left (\frac{c}{\sqrt{|y_1|}} \right )$ if $\frac{y_1}{y_2^2}$ goes to infinity ;
\item $G(0,y)$ is equivalent to $\left ( \frac{c'}{|y_2|} \right )$ if $\frac{y_1}{y_2^2}$ converges to a finite limit.
\end{itemize}
In the first case, the quantity
\begin{displaymath}
\mathbf{E}^x(\eta_{0,\tau_1}(y))=o \left (\frac{1}{\sqrt{|y_1|}} \right )
\end{displaymath}
and in the second case,
\begin{displaymath}
\mathbf{E}^x(\eta_{0,\tau_1}(y))=o \left (\frac{1}{|y_2|} \right ).
\end{displaymath}
Then, obviously, the ratio involved in the corollary converges to $0$ in any direction as $|y|$ goes to infinity.
\end{proof}

\begin{proof}[Proof of theorem \ref{original}]
Since for all $x \in \mathbb{H}$, $K(x,y_k)$ has no other limit point than $1$ for all unbounded sequence $(y_k)$ then, the Martin boundary is trivial.
\end{proof}

\subsection{Proofs of analytic decompositions} \label{technical}

\begin{lem}
The function $\phi$ is given by
\begin{displaymath}
\begin{split}
\phi & (t) = \frac{1}{p^2}(1-2q\cos(t)+q^2 \cos(2t)) \\
& - \Bigg [ \sqrt{1-2q\cos(t)+q^2} \left ( (p^{-1}-1)^2-\frac{2q}{p}(p^{-1}-1)\cos(t)+\frac{q^2}{p^2} \right )^{\frac{1}{4}} \\
& \left ( (p^{-1}+1)^2-\frac{2q}{p}(p^{-1}+1)\cos(t)+\frac{q^2}{p^2} \right )^{\frac{1}{4}} \cos \left [ \arctan \left ( \frac{-q\sin(t)}{1-q\cos(t)} \right ) \right . \\
& \left . +\frac{1}{2}\arctan \left ( \frac{-\sin(t)}{1-\cos(t)} \right ) + \frac{1}{2} \arctan \left ( \frac{-q\sin(t)}{1+p-q\cos(t)} \right ) \right ] \Bigg ].
\end{split}
\end{displaymath}
Furthermore, in the case of the simple random walk we have $p=1/3=1-q$, so that
\begin{equation} \label{phi-expression}
\begin{split}
\phi(t) & = (9-12\cos(t)+4\cos(2t)) \\
& - \Bigg [ \frac{\sqrt{13-12\cos(t)}}{3} 8^{1/4} \left ( (1-\cos(t) \right )^{\frac{1}{4}} 4^{1/4} \left ( 5-4\cos(t) \right )^{\frac{1}{4}} \\
& \cos \left [ \arctan \left ( \frac{-2\sin(t)}{3-2\cos(t)} \right )+\frac{1}{2}\arctan \left ( \frac{-\sin(t)}{1-\cos(t)} \right ) \right . \\
& \left . +\frac{1}{2} \arctan \left ( \frac{-\sin(t)}{2-\cos(t)} \right ) \right ] \Bigg ].
\end{split}
\end{equation}
\end{lem}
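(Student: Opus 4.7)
The plan is to start from the closed-form expression for $\phi$ given in Proposition \ref{law}, namely $\phi(t)=\mathsf{Re}\,[r(t)^{-1}g(r(t))]$ with $g(x)=(1-\sqrt{1-x^{2}})/x$ and $r(t)=p/(1-qe^{it})$, and rewrite this complex expression in polar form, separately extracting modulus and argument of each factor. The bulk of the work is an explicit computation with no essential analytic idea beyond the factorisation $(1-qe^{it})^{2}-p^{2}=(1-qe^{it}-p)(1-qe^{it}+p)$, which, using $1-p=q$, becomes $q(1-e^{it})\bigl((1+p)-qe^{it}\bigr)$ and is what produces the three trigonometric factors appearing in the statement.

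The first step is the algebraic reduction
\begin{displaymath}
r(t)^{-1}g(r(t))=\frac{1-\sqrt{1-r(t)^{2}}}{r(t)^{2}}=\frac{(1-qe^{it})^{2}}{p^{2}}-\frac{(1-qe^{it})}{p^{2}}\sqrt{(1-qe^{it})^{2}-p^{2}},
\end{displaymath}
so that $\phi(t)=p^{-2}\mathsf{Re}\,[(1-qe^{it})^{2}]-p^{-2}\mathsf{Re}\bigl[(1-qe^{it})\sqrt{(1-qe^{it})^{2}-p^{2}}\bigr]$. Expanding $(1-qe^{it})^{2}=1-2qe^{it}+q^{2}e^{2it}$ and taking real parts gives immediately the first summand $p^{-2}(1-2q\cos t+q^{2}\cos 2t)$.

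For the second summand I would write the three factors $(1-qe^{it})$, $q(1-e^{it})$, $(1+p)-qe^{it}$ in polar form. Direct computation yields the moduli
\begin{displaymath}
|1-qe^{it}|=\sqrt{1-2q\cos t+q^{2}},\qquad |q(1-e^{it})|=q\sqrt{2(1-\cos t)},
\end{displaymath}
\begin{displaymath}
|(1+p)-qe^{it}|=\sqrt{(1+p)^{2}-2q(1+p)\cos t+q^{2}},
\end{displaymath}
together with the three arguments $\arctan(-q\sin t/(1-q\cos t))$, $\arctan(-\sin t/(1-\cos t))$, and $\arctan(-q\sin t/(1+p-q\cos t))$. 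Pulling out the powers of $p$ hidden in the substitutions $p^{-1}-1=q/p$ and $p^{-1}+1=(1+p)/p$ rewrites the moduli in the exact fourth-root form stated in the lemma. Taking the square root halves the arguments of the last two factors, and multiplying by $(1-qe^{it})$ adds the first argument; the real part finally introduces the cosine of the total argument, giving precisely the bracketed expression in the statement.

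The last step is the specialisation $p=\tfrac{1}{3}$, $q=\tfrac{2}{3}$: substituting and simplifying the numerical constants ($1-2q\cos t+q^{2}=(13-12\cos t)/9$, $(1+p)^{2}-2q(1+p)\cos t+q^{2}=(4/9)(5-4\cos t)$, and $1+p-q\cos t$ gives $2-\cos t$ up to the factor $2/3$) produces formula (\ref{phi-expression}). The only real obstacle is bookkeeping of branches of $\arctan$ and of the square root, which I would control by restricting to $t\in(-\pi,\pi)$ and verifying continuity of $\phi$ and its value at $t=0$ as a sanity check; there is no conceptual difficulty beyond that.
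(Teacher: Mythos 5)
Your proposal is correct and follows essentially the same route as the paper: start from $\phi(t)=\mathsf{Re}\,[r(t)^{-1}g(r(t))]$, split off $\mathsf{Re}\,[(1-qe^{it})^2/p^2]$, and write the square-root term in polar form after the factorisation $(1-qe^{it})^2-p^2=q(1-e^{it})\bigl((1+p)-qe^{it}\bigr)$, which is the same decomposition as the paper's $\sqrt{z/p-1}\,\sqrt{z/p+1}$ with $z=1-qe^{it}$, just with the powers of $p$ pulled out explicitly. The modulus and argument computations then coincide term by term with those in the paper's proof.
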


\begin{proof}
Denote by $z$ the complex number $z=1-qe^{it}$, then we get
\begin{displaymath}
\phi(t)=\mathsf{Re} \frac{z^2}{p^2}-\frac{z}{p} \sqrt{\frac{z^2}{p^2}-1}
\end{displaymath}
A simple computation gives us that $\mathsf{Re} \frac{z^2}{p^2} = \frac{1}{p^2}(1-2q \cos(t) + q^2 \cos(2t))$. It remains to make explicit the term with the square root. Start by expanding in polar form,
\begin{displaymath}
\frac{z}{p} \sqrt{\frac{z^2}{p^2}-1} = \frac{z}{p} \sqrt{\frac{z}{p}-1} \sqrt{\frac{z}{p}+1},
\end{displaymath}
then, we have for the modulus of $z$,
\begin{displaymath}
|z|^2 = 1-2q\cos(t)+q^2,
\end{displaymath}
and for its argument
\begin{displaymath}
\mathsf{Arg} \left ( \frac{z}{p} \right )=\arctan \left ( \frac{-q\sin(t)}{1-q\cos(t)} \right ).
\end{displaymath}

For the modulus and argument of $\frac{z}{p}-1$
\begin{displaymath}
\left | \frac{z}{p}-1 \right |^2=\left | \frac{1}{p}-1-\frac{q}{p} e^{it} \right |^2=(p^{-1}-1)^2-\frac{2q}{p}(p^{-1}-1)\cos(t)+\frac{q^2}{p^2},
\end{displaymath}
and
\begin{displaymath}
\mathsf{Arg}\left ( \frac{z}{p}-1 \right )=\arctan \left ( \frac{-\sin(t)}{1-\cos(t)} \right ).
\end{displaymath}

Finally, we have for $\frac{z}{p}+1$
\begin{displaymath}
\left | \frac{z}{p}+1 \right |^2= \left |\frac{1}{p}+1-\frac{q}{p} e^{it} \right |^2=(p^{-1}+1)^2-\frac{2q}{p}(p^{-1}+1)\cos(t)+\frac{q^2}{p^2},
\end{displaymath}
and
\begin{displaymath}
\mathsf{Arg} \left ( \frac{z}{p}+1 \right )=\arctan \left ( \frac{-q\sin(t)}{1+p-q\cos(t)} \right ).
\end{displaymath}
\end{proof}

\begin{proof}[Proof of proposition \ref{decinitial}.]
It is easy to show that
\begin{displaymath}
\frac{-\sin(t)}{1-\cos(t)}=-\frac{2}{t}(1+A_0(t))
\end{displaymath}
and that the power series of $\arctan$ in the neighborhood of $-\infty$ and $+\infty$ andis given by
\begin{displaymath}
\arctan(v)=\pm \frac{\pi}{2} - \sum_{n \geq 0} (-1)^n \frac{1}{(2n+1)v^{2n+1}}
\end{displaymath}
and the $\pm$ depends on the fact that $v$ is in the neighborhood of $\pm \infty$. Consequently, it gives
\begin{displaymath}
\arctan \left ( \frac{-\sin(t)}{1-\cos(t)} \right ) = \textsf{sgn}(t) \frac{\pi}{2} - \frac{t}{2}(1-A_1(t))
\end{displaymath}
with $A_1$ analytic such that $A_1(0)=0$.

The functions $t \mapsto \arctan \left ( \frac{-2\sin(t)}{3-2\cos(t)} \right )$ and $t \mapsto \arctan \left ( \frac{-\sin(t)}{2-\cos(t)} \right )$ are analytic in a neighborhood of $0$ and vanishes for $t=0$. Thus, the expansion in a power series of the cosine in equation \ref{phi-expression} is given by $\frac{\sqrt{2}}{2}(1+A_2(t))$ where $A_2$ is analytic and $A_2(0)=0$.

The only remaining problematic term is $(1-\cos(t))^{1/4}$ which can rewritten as $\sqrt{|t|} A_3(t)$ with $A_3$ a power series around $0$.

Summarizing, there exists two analytic functions $A_4(t)$ and $A_5(t)$ such that
\begin{displaymath}
\phi(t)=1-\sqrt{|t|}A_4(t)-A_5(t)
\end{displaymath}
and the proposition \ref{decinitial} easily follows.
\end{proof}


\begin{proof}[Proof of proposition \ref{dec}.]
We already know that $g(r(t))$ is given by
\begin{displaymath}
g(r(t))=\frac{1-\sqrt{1-r(t)^2}}{r(t)}=\frac{1}{r(t)}-\sqrt{\frac{1}{r(t)^2}-1}.
\end{displaymath}
The first term is very easy to decompose
\begin{displaymath}
\frac{1}{r(t)}=3-2e^{it}=1+2(1-e^{it})=1-\beta(t)
\end{displaymath}
where $\beta$ is given by $\beta(t)=2\sum_{n \geq 1} \frac{(it)^n}{n!}$.

The second term with the square root requires a finer analysis. First we have to express the argument of the square in polar form.
\begin{displaymath}
\frac{1}{r(t)^2}-1 =(3-2e^{it})^2-1 = 4(2-e^{it})(1-e^{it})
\end{displaymath}
Then, we compute the square of the modulus,
\begin{displaymath}
\left | \frac{1}{r(t)^2} - 1 \right |^2 = 32(5-4\cos(t))(1-\cos(t))
\end{displaymath}
Thus, the square root of the modulus is given by
\begin{displaymath}
\sqrt{\left | \frac{1}{r(t)^2}-1 \right |}=2\sqrt{|t|}(1+A_0(t))
\end{displaymath}
where $A_0(t)$ is an analytic funtion satisfying $A_0(0)=0$.

Let us now decompose the argument of the complex function $r(t)^{-2}-1$, 
\begin{displaymath}
\arg \left (\frac{1}{r(t)^2}-1 \right ) = \arctan \frac{-\sin(t)}{2-\cos(t)}
+\arctan \frac{-\sin(t)}{1-\cos(t)}.
\end{displaymath}

The first term $\arctan \frac{-\sin(t)}{2-\cos(t)}$ is analytic as the composition of two analytic functions.

For the second term, we compute as in the proof of the proposition \ref{decinitial}
\begin{displaymath}
\arctan \left ( \frac{-\sin(t)}{1-\cos(t)} \right ) = \textsf{sgn}(t) \frac{\pi}{2} - \frac{t}{2}(1-A_1(t))
\end{displaymath}
with $A_1$ analytic such that $A_1(0)=0$.

Finally we get the following decomposition,
\begin{displaymath}
\sqrt{\frac{1}{r(t)^2}-1} = \sqrt{|t|} (1+A_0(t))e^{i\textsf{sgn}(t)\frac{\pi}{4}}A_7(t)
\end{displaymath}
with $A_0(0)=0$ and $A_2(0)=1$ and letting $\alpha(t)=A_0(t)A_2(t)e^{\textsf{sgn}(t) i \frac{\pi}{4}}$, the proposition is proved.
\end{proof}
\subsection{Poisson boundary} \label{poisson}

In this section, we give an elementary proof of the triviality of the Poisson of the simple random walk on $\mathbb{H}$ even though we already know that since the Martin boundary is trivial. However, the ideas in this elementary proof can be exploited to show the triviality of the Poisson boundary for more general random walks and orientations for which the description of the Martin boundary would be tedious.
\newline

\textbf{The case of the simple random walk}
\newline

The following proposition is proved by adapting the proof of the triviality of the Poisson boundary of random walks on Abelian groups due to Choquet and Deny (see \cite{choquet-convolution}) or more specifically we will adapt the proof of theorem T1, chapter VI, in \cite{Spi}.

\begin{prp} \label{poisson-srw}
The Poisson boundary of the simple random walk on $\mathbb{H}$ is trivial, \emph{i.e} all bounded harmonic function are constant.
\end{prp}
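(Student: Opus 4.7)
The plan is to reduce the statement to the classical Choquet-Deny theorem applied to the induced Markov chain $(M_{\tau_n})$ on $\mathbb{H}_0 \simeq \mathbb{Z}$. Let $h : \mathbb{H} \to \mathbb{R}$ be bounded and harmonic for $P$, so that $(h(M_n))_{n \geq 0}$ is a bounded martingale. Since $\mathbf{P}^x(\tau_1 < \infty) = 1$ for every $x \in \mathbb{H}$, the optional sampling theorem gives
\begin{displaymath}
h(x) = \mathbf{E}^x[h(M_{\tau_1})] = \sum_{z \in \mathbb{H}_0} \nu_x(z) h(z).
\end{displaymath}
In particular the restriction $h_0 := h|_{\mathbb{H}_0}$ is bounded and harmonic for the transition kernel $\nu_0$ of $(M_{\tau_n})$; by horizontal translation invariance of $\mathbb{H}$, this induced chain is a genuine homogeneous random walk on the abelian group $\mathbb{Z}$ whose step distribution is the law of $X_{\sigma_1}$ computed in Proposition \ref{law}.

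Second, I would invoke Choquet-Deny for bounded harmonic functions on $\mathbb{Z}$, in the form of Spitzer's Theorem T1, Chapter VI. The only input needed is that the closed subgroup of $\mathbb{Z}$ generated by the support of the step distribution equals $\mathbb{Z}$ itself. This is immediate from the explicit description of $X_{\sigma_1}$ in terms of the auxiliary walk $Y$ and the geometric variables $\xi_i^{(y)}$: conditioning on $\psi_1 = +1$ and on $\sigma_1 = 2$ gives $X_{\sigma_1} = \xi_1^{(1)}$, which takes every value in $\{0, 1, 2, \dots\}$ with strictly positive probability, and by symmetry conditioning on $\psi_1 = -1$ produces every value in $\{0, -1, -2, \dots\}$. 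Hence $X_{\sigma_1}$ has full support in $\mathbb{Z}$, and Choquet-Deny forces $h_0 \equiv c$ to be a constant.

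Finally, substituting $h_0 \equiv c$ back into the identity displayed above yields $h(x) = c \sum_{z \in \mathbb{H}_0} \nu_x(z) = c$ for every $x \in \mathbb{H}$, which is the desired triviality of the Poisson boundary. The first and third steps are essentially bookkeeping; the only non-formal ingredient is the subgroup-generation property of the law of $X_{\sigma_1}$, and this robustness of the reduction is presumably why, as the author remarks, the same strategy extends to more general random walks and orientations for which a direct computation of the Martin boundary would be intractable.
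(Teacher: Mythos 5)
Your proof is correct, but it takes a genuinely different route from the paper's. The paper does not reduce to the induced chain at all; instead, following Spitzer's Theorem T1, Chapter VI, it adapts the \emph{proof} of Choquet--Deny directly on $\mathbb{H}$: for a horizontal vector $a$, it shows $g(x)=h(x)-h(x-a)$ is harmonic, passes to a pointwise subsequential limit $g^*$ of translates achieving the supremum $M$ of $g$, uses irreducibility and the maximum principle to get $g^* \equiv M$, and then telescopes $r$ of the inequalities $g_n(ja) > M-\epsilon$ to contradict boundedness of $h$ unless $M\le 0$ --- concluding that $h$ is invariant under horizontal translations, after which the vertical function $\tilde h(y)=h(x_0,y)$ is a bounded harmonic function of the simple walk on $\mathbb{Z}$ and hence constant. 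You instead apply optional stopping at $\tau_1$ to transfer harmonicity to the induced random walk on $\mathbb{H}_0 \cong \mathbb{Z}$ (whose step law is $X_{\sigma_1}$, by Proposition \ref{law}), invoke Choquet--Deny there as a black box once you verify the step distribution generates $\mathbb{Z}$, and pull constancy back via $h(x)=\sum_z\nu_x(z)h_0(z)$. Both arguments are sound; each step of yours holds (bounded martingale plus $\tau_1<\infty$ a.s.\ justifies optional sampling, the aperiodicity/support computation via $\xi_1^{(1)}$ is correct). What the paper's version buys is self-containedness and the remark that the same maximal-sequence argument adapts verbatim to walks with a drift where only horizontal translation invariance survives, whereas your reduction is shorter and cleanly separates the two mechanisms at work: recurrence of the vertical coordinate (to hit $\mathbb{H}_0$) and abelianness of the horizontal displacement (for Choquet--Deny).
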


\begin{proof}[Elementary proof]
Let $h$ be a bounded harmonic function and $a=(\alpha,0)$ a vector of $\mathbb{H}$. We set $g(x)=h(x)-h(x-a)$, then $g$ is obviously harmonic
\begin{displaymath}
Pg(x)=h(x)-\sum_{y \in \mathbb{H}} p(x,y)h(y-a).
\end{displaymath}
Thus, setting $z=x-a$, substituting in the sum, and noting that $p(x,z+a)=p(x-a,z)$ because $P$ is invariant by horizontally translation, we get
\begin{displaymath}
Pg(x)=h(x)-\sum_{z \in \mathbb{H}} p(x-a,z)h(z)=h(x)-h(x-a)=g(x).
\end{displaymath}
Now let $\sup_{x \in \mathbb{H}} g(x)=M < \infty$, choose a sequence $x_n$ of point in $\mathbb{H}$ such that
\begin{displaymath}
\lim_{n \to \infty} g(x_n)=M,
\end{displaymath}
and let
\begin{displaymath}
g_n(x)=g(x+x_n).
\end{displaymath}
Since $g$ is bounded, one can select a subsequence $x_n^{(1)}$ from the sequence $x_n$ such that, for a certain $x=x_1$
\begin{displaymath}
\lim_{n \to \infty} g(x_1+x_n^{(1)}) \textrm{ exists. }
\end{displaymath}
However, we can do better. We can take a subsequence $x_n^{(2)}$ of the sequence $x_n^{(1)}$ such that $g(x+x_n^{(2)})$ has a limit at $x=x_1$ and also at $x=x_2$. This process can be continued. By the Cantor's diagonalisation principle, $\mathbb{H}$ being countable, there exists a subsequence $n_l$ of positive integers and a real function $g^*$ on $\mathbb{H}$ such that
\begin{displaymath}
\lim_{l \to \infty} g_{n_l}=g^*(x)
\end{displaymath}
for every $x \in \mathbb{Z}$. Moreover, it is obvious that
\begin{displaymath}
g^*(0)=M, \textrm{ and  } g^*(x) \leq M \textrm{ for all } x \in \mathbb{H}.
\end{displaymath}
Furthermore, the function $g^*$ is harmonic by dominated convergence.

Recall that the simple random walk on $\mathbb{H}$ is irreducible because the graph is connected. Thus, applying the maximum principle to the harmonic function implies that $g^*\equiv g^*(0)=M$. 

Let $r$ be any positive integer and $\epsilon > 0$, we can find an integer $n$ large enough such that
\begin{displaymath}
g_n(a) > M-\epsilon \textrm{ ; } g_n(2a) > M-\epsilon \textrm{ ; } \cdots \textrm{ ; } g_n(ra) > M-\epsilon.
\end{displaymath}
Going back to the definition of $g_n$ and adding those $r$ inequalities, we obtain
\begin{displaymath}
h(ra+x_n)-h(x_n) > r(M-\epsilon)
\end{displaymath}
for large $n$ enough. We can show that $M$ can not be positive. Indeed, if it was, the integer $r$ could have been chosen so large that $r(M-\epsilon)$ exceeds the least upper bound of $h$. Therefore, it follows $g(x) \leq M \leq 0$ and $h(x) \leq h(x-a)$. Obviously, we can do the same reasoning for $-h$ and we would have $h(x) \geq h(x-a)$.

Setting $\tilde{h}(y)=h(x_0,y)$ for some $x_0$, we show that the bounded harmonic function $\tilde{h}$ is constant by maximum principle.
\end{proof}
\quad

\textbf{The case of random walk on $\mathbb{H}$ with a drift}
\newline

Looking at the proof of the proposition \ref{poisson-srw}, we observe that the crucial property is the translation invariance of the operator which allows to consider the simpler problem of the determination of the bounded harmonic functions associated with a specific random walk on $\mathbb{Z}$.

Let $(p_y)_{y \in \mathbb{Z}}$ be a sequence of real number in $[0,1)$ and let $(q_y)_{y \in \mathbb{Z}}$ be a sequence of positive real numbers with $q_y<1-p_y$ for all $y \in \mathbb{Z}$. We suppose that, at the site $(x,y) \in \mathbb{H}$, the random walk can move horizontally with probability $p_{y}$, move up with probability $q_y$ and move down with probability $1-p_y-q_y$ (figure \ref{fig-irreducible}). Bearing in mind what we have noticed, the following theorem does not require a proof.

\begin{figure}[!h] 
\begin{center}
\includegraphics[width=0.5\textwidth]{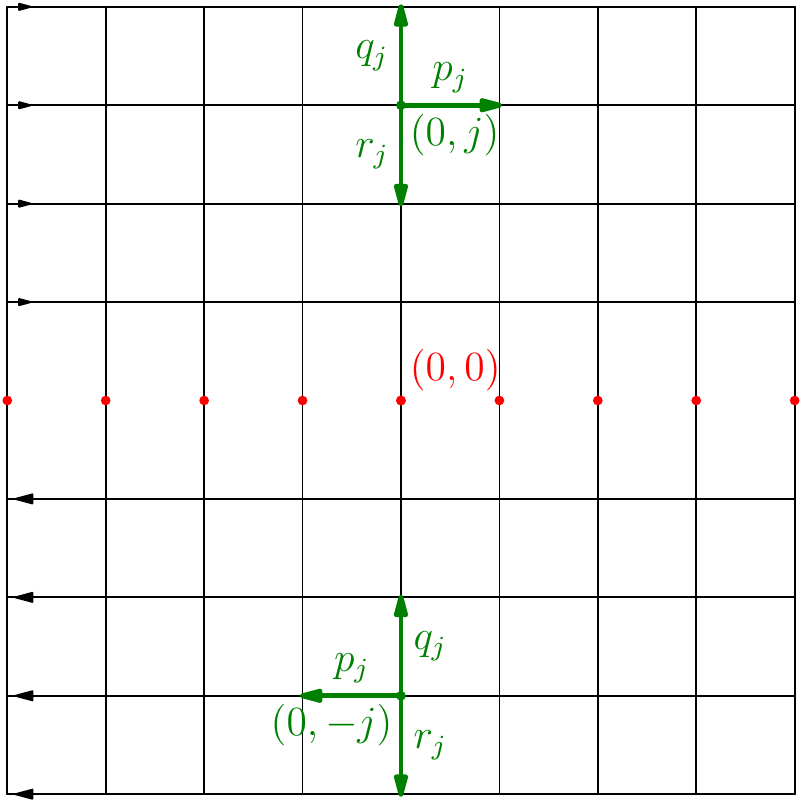}
\end{center}
\caption{The half plane one-way lattice $\mathbb{H}$ with a non constant drift.} \label{fig-irreducible}
\end{figure} 

\begin{thm}
The Poisson boundary of the random walk on $\mathbb{H}$ with transition probabilities defined as above is isomorphic to the Poisson boundary of the random walk whose transition operator is defined for $x,y \in \mathbb{Z}$ by
\begin{displaymath}
p(x,y)=\left \{ \begin{array}{ll}
p_x & \textrm{ if } $y=x$, \\
q_x & \textrm{ if } $y=x+1$, \\
1-p_x-q_x & \textrm{ if } $y=x-1$, \\
0 & \textrm{ otherwise.} \\
\end{array} \right .
\end{displaymath}
\end{thm}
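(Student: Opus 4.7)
The plan is to mimic verbatim the Choquet--Deny--Spitzer argument given in the proof of Proposition \ref{poisson-srw}, the only thing to check being that the two ingredients it actually used carry over: horizontal translation invariance of the transition operator, and irreducibility. Then the statement reduces to identifying the resulting harmonic equation on the vertical coordinate with the one of the claimed $1$D walk on $\mathbb{Z}$.

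More precisely, I would proceed as follows. Let $P$ denote the transition kernel on $\mathbb{H}$ described in the statement; since $p_y,q_y$ depend only on the second coordinate, $P$ commutes with horizontal translations $a=(\alpha,0)$. Let $h$ be a bounded harmonic function on $\mathbb{H}$ and set $g(x)=h(x)-h(x-a)$; the same computation as in Proposition \ref{poisson-srw} shows $Pg=g$ and $g$ is bounded. Choose $x_n$ with $g(x_n)\to M=\sup g$, define $g_n(x)=g(x+x_n)$, and extract by Cantor's diagonal a subsequence along which $g_n\to g^{*}$ pointwise; then $g^{*}$ is harmonic, bounded by $M$, and attains $M$ at $0$. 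The walk under consideration is irreducible on $\mathbb{H}$ (the two orientations $\epsilon_y=\pm 1$ appearing as $y$ varies combined with the vertical moves allow any two vertices to be joined, as in the discussion of transitivity), so the maximum principle forces $g^{*}\equiv M$. Picking $r$ arbitrary and $\epsilon>0$, the inequality $g_n(ka)>M-\epsilon$ for $k=1,\dots,r$ yields $h(ra+x_n)-h(x_n)>r(M-\epsilon)$, which contradicts boundedness of $h$ unless $M\le 0$. By symmetry ($-h$ in place of $h$, and $-a$ in place of $a$), $g\equiv 0$, i.e. $h(x_1,x_2)=\tilde h(x_2)$ depends only on the vertical coordinate.

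Next, I would translate the harmonicity equation $Ph=h$ into a condition on $\tilde h$. Evaluating at a generic site $(u,v)\in\mathbb{H}$ and using that horizontal steps do not alter $v$, the three available transitions give
\begin{displaymath}
\tilde h(v)=p_v\tilde h(v)+q_v\tilde h(v+1)+(1-p_v-q_v)\tilde h(v-1),
\end{displaymath}
which is exactly the harmonicity equation of the Markov kernel $p(x,y)$ described in the theorem. The correspondence $h\leftrightarrow\tilde h$ is clearly linear, injective, and surjective (any bounded harmonic $\tilde h$ on $\mathbb{Z}$ lifts to the bounded harmonic $h(u,v)=\tilde h(v)$ on $\mathbb{H}$), and it intertwines the two shift-invariant sigma-algebras, so it induces the claimed isomorphism of Poisson boundaries.

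The only step that requires mild care, and that I regard as the main (though still minor) obstacle, is the maximum principle application: one must make sure that the walk on $\mathbb{H}$ with the prescribed non-constant probabilities $(p_y,q_y)$ is genuinely irreducible (so that a bounded harmonic function attaining its supremum is constant). This is where the hypotheses $p_y\in[0,1)$ and $0<q_y<1-p_y$ are used: they guarantee strictly positive probability for both vertical moves, which together with the alternation $\epsilon_y=\pm 1$ (embedded in the definition of $\mathbb{H}$) allows the walker to reach any vertex from any other. Once irreducibility is checked, every ingredient of the proof of Proposition \ref{poisson-srw} applies without modification, which is precisely why the author remarks that the theorem requires no further proof.
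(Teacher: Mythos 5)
Your proposal is correct and is precisely the argument the paper leaves to the reader: the paper states that "the theorem does not require a proof" because the Choquet--Deny--Spitzer argument of Proposition \ref{poisson-srw} goes through verbatim once one observes that the kernel is invariant under horizontal translations, forcing every bounded harmonic function to depend only on the vertical coordinate, where it satisfies exactly the harmonicity equation of the stated one-dimensional walk. Your remark about irreducibility is a fair caveat but one the paper's hypotheses ($q_y>0$, $1-p_y-q_y>0$, together with some horizontal motion in both orientations) are implicitly meant to guarantee, so you are not departing from the intended proof.
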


In our context, the orientation $\epsilon$ has been fixed once for all. However, it can be chosen randomly. If $\epsilon=(\epsilon_y)_{y \in \mathbb{Z}}$ is a sequence of independent random variables it is shown in \cite{Pet1} that the corresponding simple random walk on $(\mathbf{G},\epsilon)$ is transition for almost all $\epsilon$. This result has been generalized in \cite{GP-functional} for a random sequence $\epsilon$ for which $\epsilon_y$ is equal to 1 with probability $f_y$ and -1 with probability $1-f_y$ where $(f_y)_{y \in \mathbb{Z}}$ is a sequence of stationary random variables satisfying $\mathbf{E}(f_0(1-f_0))^{-1/2} < \infty$. Finally, the case of a stationary sequence $\epsilon$ with decorrelation conditions is considered in \cite{pene2009transient} and, also, the corresponding simple random walk is shown to be transient. In those situations, the Poisson boundary remains obviously trivial (for all orientations) since, for all $y \in \mathbb{Z}$, $q_y \equiv p_y = \frac{1}{3}$ and the corresponding Markov operator on $\mathbb{Z}$ is invariant par the natural $\mathbb{Z}$-action.

\bibliographystyle{alpha}
\bibliography{biblio}

\end{document}